\tikzset{
  arrow/.pic={\path[tips,every arrow/.try,->,>=#1] (0,0) -- +(0,4pt);},
  pics/arrow/.default={triangle 90}
}
\tikzset{->-/.style={decoration={
  markings,
  mark=at position .6 with {\arrow{latex}}},postaction={decorate}}
  }
\tikzset{
  c/.style={every coordinate/.try}
}
\newcommand{\midarrow}{\tikz \draw[-triangle 90] (0,0) -- +(.03,0);}
\DeclareMathAlphabet{\pazocal}{OMS}{zplm}{m}{n}
\newcommand{\tropC}{\scalebox{0.8}[1.3]{$\sqsubset$}}
\newcommand{\ttropC}{\widetilde{\tropC}}
\newcommand{\tropT}{\scalebox{0.9}[1.2]{$\top$}}
\newcommand{\tropD}{\scalebox{0.9}[1.3]{$\triangle$}}
\newcommand{\tropA}{\scalebox{0.8}[1.3]{$\sphericalangle$}}
\newcommand{\CO}{\mathcal{O}}
\newcommand{\CE}{\mathcal{E}}
\newcommand{\CL}{\mathcal{L}}
\newcommand{\tC}{\widetilde{C}}
\newcommand{\Do}{\pazocal D_1}
\newcommand{\Dt}{\pazocal D_2}
\newcommand{\VZ}{\pazocal{V\!Z}_{2,n}}
\newcommand{\M}[4]{\overline{\pazocal M}_{#1,#2}(#3,#4)}
\newcommand{\PP}{\mathbb P}
\renewcommand{\k}{\mathbf k}
\newcommand{\m}{\mathfrak m}
\newcommand{\tR}{\widetilde{R}}
\newcommand{\tm}{\widetilde{\mathfrak m}}
\newcommand{\NN}{\mathbb N}
\newcommand{\OO}{\mathcal O}
\newcommand{\ZZ}{\mathbb Z}
\renewcommand{\to}{\rightarrow}
\newcommand{\pr}{\rm{pr}}
\newcommand{\Aaff}{\mathbb A}
\newcommand{\N}{\mathbb N}
\newcommand{\oM}{\overline{\pazocal M}}
\newcommand{\tM}{\widetilde{\pazocal M}}
\newcommand{\R}{\operatorname{R}}
\newcommand{\val}{\operatorname{val}}
\newcommand{\Gm}{\mathbb{G}_{\rm{m}}}
\newcommand{\Ga}{\mathbb{G}_{\rm{a}}}
\newcommand{\vir}[1]{[#1]^{\rm{vir}}}
\newcommand{\virloc}[1]{[#1]^{\rm{vir}}_{\rm{loc}}}
\newcommand{\dvr}{\Delta}
\newcommand{\cC}{\mathcal C}
\newcommand{\TC}{\widetilde{\mathcal C}}
\newcommand{\TA}{\widetilde{\mathcal A}}
\newcommand{\Cp}{\mathcal{C}^\prime}
\newcommand{\Tp}{\mathcal{T}^\prime}
\newcommand{\Z}{\pazocal Z}
\newcommand{\Zp}{\pazocal{Z}^\prime}
\newcommand{\Li}{\mathcal L}
\newcommand{\OC}{\overline{\mathcal C}}
\newcommand{\bq}{\begin{equation}}
\newcommand{\eq}{\end{equation}}
\newcommand{\ba}{\begin{aligned}}
\newcommand{\ea}{\end{aligned}}
\newcommand{\be}{\begin{enumerate}}
\newcommand{\ee}{\end{enumerate}}
\newcommand{\bsm}{\left(\begin{smallmatrix}}
\newcommand{\esm}{\end{smallmatrix}\right)}                   
\newcommand{\bpm}{\begin{pmatrix}}
\newcommand{\epm}{\end{pmatrix}}
\newcommand{\overliner}{\begin{displaymath}\begin{array}{cccc}}
\newcommand{\earr}{\end{array}\end{displaymath}}
\newcommand{\overlinerl}{\begin{displaymath}\begin{array}{lcl}}
\newcommand{\earrl}{\end{array}\end{displaymath}}
\newcommand{\overlinel}{\begin{displaymath}\begin{array}{l}}
\newcommand{\earl}{\end{array}\end{displaymath}}
\newcommand{\bxym}{ \begin{displaymath}\xymatrix }
\newcommand{\exym}{\end{displaymath}}
\newcommand{\bcd}{\begin{center}\begin{tikzcd}}
\newcommand{\ecd}{\end{tikzcd}\end{center}}
\newcommand{\tr}{{\rm tr}}
\newcommand{\Isom}{\text{Isom}}
\newcommand{\Spec}{\underline{\operatorname{Spec}}}
\newcommand{\Proj}{\operatorname{Proj}}
\newcommand{\Pic}{\operatorname{Pic}}
\newcommand{\Hom}{\operatorname{Hom}}
\newcommand{\dist}{\operatorname{dist}}
\newcommand{\hhom}{\mathcal{H}\!om}
\newcommand{\Aut}{\operatorname{Aut}}
\newcommand{\Exc}{\operatorname{Exc}}
\newcommand{\lev}{\operatorname{lev}}
\newcommand{\ev}{\operatorname{ev}}
\newcommand{\id}{{\rm id}}
\theoremstyle{plain}
\newtheorem{thm}{Theorem}[section]
\newtheorem{lem}[thm]{Lemma}
\newtheorem*{thm*}{Theorem}
\newtheorem*{matteo*}{Main Theorem}
\theoremstyle{definition}
\newtheorem{remark}[thm]{Remark}
\newcommand{\doublewidetilde}[1]{{%
  \mathpalette\double@widetilde{#1}%
}}
\newcommand{\double@widetilde}[2]{%
  \sbox\z@{$\m@th#1\widetilde{#2}$}%
  \ht\z@=.9\ht\z@
  \widetilde{\box\z@}%
}
\newcommand{\etalchar}[1]{$^{#1}$}
\providecommand{\customgenericname}{}
\newcommand{\newcustomtheorem}[2]{%
  \newenvironment{#1}[1]
  {%
   \renewcommand\customgenericname{#2}%
   \renewcommand\theinnercustomgeneric{##1}%
   \innercustomgeneric
  }
  {\endinnercustomgeneric}
}
\def\Dk{\tikz[baseline=-3pt]{
\draw (0,0)node[left]{\scriptsize $g=2,d=0$} -- (1,.5) node[right]{\scriptsize $g=0,d_1$} (0,0)--(1,-.5)node[right]{\scriptsize $g=0,d_k$} (0,0) -- (1.2,.2)node[right]{\scriptsize $g=0,d_2$};
\draw (1.2,-.2) node[right]{\scriptsize $\ldots$};
\draw (0,0) circle(2pt)[fill=white];
\fill (1,.5) circle (2pt) (1,-.5) circle (2pt) (1.2,.2) circle (2pt)}
}
\def\Ek{\tikz[baseline=-3pt]{
\draw (-1.5,0) node[left]{\scriptsize $g=1,d_0$} -- (0,0);
\draw (0,0)node[above]{\scriptsize \begin{tabular}{c} $g=1,$ \\ $d=0$ \end{tabular}} -- (1,.5) node[right]{\scriptsize $g=0,d_1$} (0,0)--(1,-.5)node[right]{\scriptsize $g=0,d_k$} (0,0) -- (1.2,.2)node[right]{\scriptsize $g=0,d_2$};
\draw (1.2,-.2) node[right]{\scriptsize $\ldots$};
\draw (0,0) circle(2pt)[fill=white];
\fill (-1.5,0) circle (2pt) (1,.5) circle (2pt) (1,-.5) circle (2pt) (1.2,.2) circle (2pt)}
}
\def\Ekk{\tikz[baseline=-3pt]{
\draw (-3,0)node[above]{\scriptsize \begin{tabular}{c} $g=1,$ \\ $d=0$ \end{tabular}} -- (-4,.5) node[left]{\scriptsize $g=0,d_{1,1}$} (-3,0)--(-4,-.5)node[left]{\scriptsize $g=0,d_{1,k_1}$} (-3,0) -- (-4.2,.2)node[left]{\scriptsize $g=0,d_{1,2}$};
\draw (-3,0) -- (-1.5,0) node[below]{\scriptsize $g=0,d_0$} -- (0,0);
\draw (0,0)node[above]{\scriptsize \begin{tabular}{c} $g=1,$ \\ $d=0$ \end{tabular}} -- (1,.5) node[right]{\scriptsize $g=0,d_{2,1}$} (0,0)--(1,-.5)node[right]{\scriptsize $g=0,d_{2,k_2}$} (0,0) -- (1.2,.2)node[right]{\scriptsize $g=0,d_{2,2}$};
\draw (1.2,-.2) node[right]{\scriptsize $\ldots$} (-4.2,-.2) node[right]{\scriptsize $\ldots$};
\draw (0,0) circle(2pt)[fill=white] (-3,0) circle(2pt)[fill=white];
\fill (-1.5,0) circle (2pt) (1,.5) circle (2pt) (1,-.5) circle (2pt) (1.2,.2) circle (2pt) (-4,.5) circle (2pt) (-4,-.5) circle (2pt) (-4.2,.2) circle (2pt)}
}
\def\pacman{\tikz[baseline=-3pt]{
\draw (1,0) -- (2,.5) node[right]{\scriptsize $g=0, d_1$} (1,0)--(2,-.5)node[right]{\scriptsize $g=0, d_k$} (1,0) -- (2.2,.2)node[right]{\scriptsize $g=0, d_2$};
\draw (2.2,-.2) node[right]{\scriptsize $\ldots$};
\draw (0,0)node[left]{\scriptsize $g=0,d_0$} to[out=20,in=160] (1,0) node[above]{\scriptsize \begin{tabular}{c} $g=1,$ \\ $d=0$ \end{tabular}};
\draw (0,0) to[out=-20,in=-160] (1,0);
\fill (0,0) circle (2pt);
\draw[fill=white] (1,0) circle (2pt);
\fill  (2,.5) circle (2pt) (2,-.5) circle (2pt) (2.2,.2) circle (2pt)}
}
\def\hypell{\tikz[baseline=-3pt]{
\draw (0,0)node[left]{\scriptsize $g=2,d_0=2$} -- (1,.5) node[right]{\scriptsize $g=0, d_1$} (0,0)--(1,-.5)node[right]{\scriptsize $g=0, d_k$} (0,0) -- (1.2,.2)node[right]{\scriptsize $g=0, d_2$};
\draw (1.2,-.2) node[right]{\scriptsize $\ldots$};
\draw (0,0) circle(2pt)[fill=gray!50];
\fill (1,.5) circle (2pt) (1,-.5) circle (2pt) (1.2,.2) circle (2pt)}
}
\newcommand{\todo}[1]{\vspace{5mm}\par \noindent
\framebox{\begin{minipage}[c]{0.95 \textwidth} \tt #1\end{minipage}} \vspace{5mm} \par}
\def\ti{-\allowhyphens}
\newcommand{\thismonth}{\ifcase\month 
  \or January\or February\or March\or April\or May\or June%
  \or July\or August\or September\or October\or November%
  \or December\fi}
\newcommand{\thismonthyear}{{\thismonth} {\number\year}}
\newcommand{\thisdaymonthyear}{{\number\day} {\thismonth} {\number\year}}
\title{A geographical study of $\overline{\pazocal M}_{2}(\PP^2,4)^{\text{main}}$}
\author{Luca Battistella and Francesca Carocci}
\begin{document}

{\setstretch{1.3}
\begin{abstract}
We discuss criteria for a stable map of genus two and degree $4$ to the projective plane to be smoothable, as an application of our modular desingularisation of $\overline{\pazocal M}_{2,n}(\PP^r,d)^{\text{main}}$ via logarithmic geometry and Gorenstein singularities.
\end{abstract}}

\maketitle
\tableofcontents

\vspace{-1cm}

\section{Theory}

The moduli space of stable maps to a projective variety $X$ compactifies the space of smooth curves in $X$ by allowing maps from nodal curves. $\M{0}{n}{\PP^r}{d}$ is a smooth connected DM stack with a normal crossing boundary divisor - paralleling the Deligne--Mumford--Knudsen compactification of the moduli space of curves. On the other hand, the geometry of $\M{g}{n}{X}{\beta}$ is in general hard to describe. $\M{g}{n}{\PP^r}{d}$ is fundamental in that it contains $\M{g}{n}{X}{\beta}$ whenever $i\colon X\hookrightarrow\PP^r$ and $i_*\beta=d\ell$, where $\ell$ is the class of a line. In positive genus, the moduli space of stable maps to projective space consists of various components, and it is not of pure dimension, which prompted the development of virtual fundamental classes. If the degree is higher than the Riemann-Roch bound $d>2g-2$, the locus of maps $f\colon C\to \PP^r$ with $C$ smooth is smooth and connected, giving rise to a \emph{main component}. Its dimension can be computed via standard deformation theory, and is known as the \emph{expected dimension} of the moduli space (it is the dimension of the virtual class). Boundary components - usually larger - arise when $C$ is reducible and the line bundle $f^*\OO_{\PP^r}(1)$ is special on a subcurve of $C$. Originating from a closure, the main component \emph{does not} have a natural modular interpretation; in fact, it is in general hard even to describe its geometric points. In genus one, this task was carried out by R. Vakil and A. Zinger: the general point of a boundary component of $\M{1}{n}{\PP^r}{d}$ represents a map from $(E,q_1\ldots,q_k)\sqcup_\mathbf{q}\bigsqcup (R_i,q_i)$, where $E$ is an elliptic curve and $R_i\simeq\PP^1$ are rational tails, such that $E$ is contracted and the rational tails have positive degree (corresponding to any partition of $d$ into $k$ parts). The condition for such a map to be smoothable is that the lines $\{\operatorname{d}\!f(T_{R_i,q_i})\}_{i=1}^k$ span a subspace of dimension at most $k-1$ in $T_{\PP^r,f(E)}$. The classical example is that of plane cubics \cite{VZann}. 

A modern viewpoint on this problem has been developed by D. Ranganathan, K. Santos-Parker, and J. Wise \cite{RSPW1}: though the main component does not have a modular interpretation, they constructed a birational model dominating it which has one (it is a reincarnation of Vakil-Zinger's desingularisation). Roughly speaking, it adds to a stable map $f\colon C\to \PP^r$ the data of a contraction $C\to\overline C$ to a curve with an isolated Gorenstein singularity such that $f$ factors through $\overline C$ and the factorisation $\overline{f}\colon\overline{C}\to\mathbb P^r$ does not contract the minimal genus one subcurve (\emph{core}) of $\overline{C}$. The inspiration is tropical: the dual graph $\tropC$ of $C$ is a rational forest emanating from the core; we can think of $C\to\overline C$ as contracting a disc  around the core.
The locations where the edges of the tropicalization of $C$ cross the disc
determine a logarithmic modification of the moduli space of curves, and in turn of stable maps. This construction builds on work of D. Smyth \cite{SMY1}, who classified isolated Gorenstein singularities of genus one, and their semistable models. These singularities consist of the cusp, and the singularity you obtain by imposing a general linear dependence relation on the tangent vectors to a rational $m$-fold point (the union of the coordinate axes in $\Aaff^m$). Smyth called them \emph{elliptic $m$-fold points}. We can thus rephrase smoothability as follows: a stable map $f\colon C\to \PP^r$, contracting the minimal ellitic subcurve and having $k$ rational tails of positive degree, is smoothable if and only if there exists an elliptic $k$-fold point $\overline {C}$  through which $f$ factors without contracting the core of $\overline{C}$.
 A characterisation of smoothability for genus one stable maps in this spirit is also given in \cite[Prop.~2.6]{BCM}.

Similar but more robust techniques have led us to construct a modular desingularisation of $\M{2}{n}{\PP^r}{d}^{\text{main}}$ \cite{BC}:
\begin{thm*}
 There exists a log smooth and proper DM stack $\VZ(\PP^r,d)$ over $\mathbb C$, with locally free log structure and a birational morphism to  $\M{2}{n}{\PP^r}{d}^{\text{main}}$, parametrising:
 \begin{itemize}[leftmargin=.5cm]
  \item a pointed admissible hyperelliptic cover \[\psi\colon (C,D_R,p_1,\ldots,p_n)\to(T,D_B,\psi(p_1),\ldots,\psi(p_n))\]
  where $C$ is a prestable curve of arithmetic genus two, $D_R$ and $D_B$ are length six (ramification and branch) divisors, and $(T,D_B+\psi(\mathbf{p}))$  is a stable rational tree;
  \item a map $f\colon C\to \PP^r$;
  \item a bubbling destabilisation $C\leftarrow\widetilde C$ and a contraction $\widetilde C\to \overline C$ to a curve with Gorenstein singularities, such that $f$ factors through $\bar{f}\colon\overline C\to \PP^r$, and the latter is non-special on any subcurve of $\overline C$.
 \end{itemize}
\end{thm*}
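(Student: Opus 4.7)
The plan is to generalise the Ranganathan--Santos-Parker--Wise construction from genus one to genus two. The key modification, dictated by the fact that every smooth genus two curve carries a canonical double cover of $\PP^1$, is to work over the moduli stack of pointed admissible hyperelliptic covers $\overline{\mathcal H}_{2,n}$ rather than over $\oM_{2,n}$; this extra rigidity lets us canonically locate the minimal genus two subcurve (the \emph{core}) of $C$ and the tropical disc around it that needs to be contracted in analogy with the genus one picture.

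First I would classify the isolated Gorenstein singularities of arithmetic genus two that can arise as contractions of a core meeting $k$ rational branches. In parallel with Smyth's elliptic $k$-fold points, these should form a discrete list---including the hyperelliptic cusp $y^2=x^5$, the tacnode, and hybrids obtained by imposing linear dependencies among tangent directions at a Gorenstein point glued to an elliptic or nodal subcurve---each with a prescribed semistable model given by a partial resolution together with tangency data between the exceptional fibre and the strict transform. This step is considerably more delicate than in \cite{SMY1} because the core itself is now stratified (smooth genus two, a pair of elliptic curves joined at a Weierstrass pair of nodes, an elliptic curve with a self-node or cusp, and so on), and each stratum produces its own family of allowable Gorenstein contractions; controlling this stratification is precisely what the hyperelliptic admissible cover encodes.

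With this classification in hand, I would construct $\VZ$ as a logarithmic modification of $\overline{\mathcal H}_{2,n}$ by subdividing its cone complex according to the combinatorial type of the desired contraction: each maximal cone records a Gorenstein singularity type together with the lengths of those edges of the dual graph of $C$ which measure where tropical edges cross the disc around the core. Over this modification the universal bubbling destabilisation $\widetilde C\to C$ is produced by inserting semistable chains of $\PP^1$'s along subdivided edges, and the universal contraction $\widetilde C\to\overline C$ is realised locally as the $\Spec$ of an appropriate subsheaf of $\OO_{\widetilde C}$ cutting out the prescribed Gorenstein model. Log smoothness of $\VZ$ follows provided the subdivision is regular with free monoids at each vertex; the hyperelliptic data is essential here in that it resolves walls that would otherwise obstruct freeness on $\oM_{2,n}$.

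Finally, on top of $\VZ$ I would impose the moduli of maps $\bar f\colon \overline C\to \PP^r$ of degree $d$ that are non-special on every subcurve, pulled back to $f\colon C\to\PP^r$ via the composition $C\leftarrow\widetilde C\to\overline C$. Non-speciality ensures that the relative Picard and $\Hom$ schemes are smooth of the expected dimension, so that $\VZ(\PP^r,d)$ inherits log smoothness; properness follows from the properness of admissible covers together with the uniqueness of Gorenstein limits via the valuative criterion, and birationality onto the main component is verified on the locus of smooth source curves, where the auxiliary data are trivial. The hard part will unquestionably be the classification of genus two Gorenstein singularities and the verification that the associated polyhedral subdivision is regular---this is where genus two genuinely departs from genus one, and it is why the hyperelliptic framework, rather than a direct adaptation of \cite{RSPW1}, is needed.
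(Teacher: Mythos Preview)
The paper does not prove this theorem; it is quoted from the authors' companion paper \cite{BC}, and the present note \emph{applies} the result to analyse $\overline{\pazocal M}_2(\PP^2,4)$. So there is no proof here to compare against directly.

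That said, your outline is broadly consonant with what the paper reveals about the construction in \cite{BC}, but it has one substantive gap. You anticipate only \emph{isolated} Gorenstein singularities of genus two, and your list is already imprecise (the tacnode has genus one; the actual classification, recalled in \S1.1 from \cite{B}, consists of two families of types I and I\!I, indexed by the number of branches, with the low-branch cases being the planar $A_4,D_5,A_5,D_6$). What you miss entirely is that in genus two the Gorenstein target $\overline C$ may be \emph{non-reduced}: the paper stresses that ``we are led to consider factorisation through non-reduced curves as well,'' namely $(m_1,\ldots,m_k)$-tailed ribbons---a ribbon on $\PP^1$ glued at double points to rational $m_i$-fold points. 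These arise when the hyperelliptic core collapses onto its $\PP^1$-quotient rather than to a point, and they are indispensable in the later smoothability analyses (e.g.\ the $\pazocal D^{(2,1^2)}$, $\pazocal D^{(1^4)}$, ${}^{(1)}\!\pazocal E^2\!\pazocal E^{(1)}$, and ${}^{\text{hyp}}\!\pazocal D$ components). This non-reduced phenomenon has no genus-one analogue and is precisely where the hyperelliptic cover does more than locate the core: it dictates which one-parameter degenerations produce a ribbon versus an isolated singularity, and it forces the tail attaching points to pair up under the hyperelliptic involution. Your classification step and your polyhedral subdivision (the ``alignment'' of \cite{BC}) would both need to be enlarged to accommodate these non-reduced limits before the argument could go through.
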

Upon studying the image of $\VZ(\PP^r,d)$ in $\M{2}{n}{\PP^r}{d}$, we can therefore deduce which stable maps from a curve of genus two are smoothable. In the present note, we shall do so in the case of degree $4$ curves in $\PP^2$.

\subsection{Singularities}
There are two families of isolated Gorenstein singularities of genus two \cite{B}. Those with at most three branches are classically known among the (planar) ADE singularities. In the following table, we include a parametrisation as a subalgebra of $\widetilde{R}=\mathbb C[\![t_1]\!]\times \ldots\times\mathbb C[\![t_m]\!]$ ($m$ represents the number of branches) and local equations for each of them (here $[k]$ denotes the set $\{1,\ldots,k\}$). In the table below we give explicit  generators $x_i$ for the singularity; these are considered modulo  $\mathfrak m_{\widetilde{R}}^4$ for type I singularities, and modulo $\mathfrak m_{\widetilde{R}}^3$ for type II singularities.

\smallskip

 \begin{tabular}{c||c|c}
  & type I & type I\!I \\
  \hline
  \hline
  Parametr. & 
  \begin{minipage}{.37\textwidth}{
 \begin{align*}
 \begin{split}
  x_1= & \, t_1\oplus0\oplus\ldots\oplus t_m^3\\
  x_2= & \, 0\oplus t_2\oplus\ldots\oplus t_m^3\\
  &\ldots\\
  x_{m-1}= & \, 0\oplus\ldots\oplus t_{m-1}\oplus t_m^3\\
  x_m= & \, 0\oplus\ldots\oplus0\oplus t_m^2\\
  &\pmod{\langle t_1^4,\ldots,t_m^4\rangle}
  \end{split}
 \end{align*}}
  \end{minipage}
&
  \begin{minipage}{.39\textwidth}{
   \begin{align*}
 \begin{split}
  x_1= & \, t_1\oplus0\oplus\ldots\oplus t_m\\
  x_2= & \, 0\oplus t_2\oplus\ldots\oplus t_m^2\\
  &\ldots\\
  x_{m-1}= &  \, 0\oplus\ldots\oplus t_{m-1}\oplus t_m^2 \\
  (y= & \, 0\oplus t_2^3 \quad\text{if } m=2)\\
  &\pmod{\langle t_1^3,\ldots,t_m^3\rangle}.
 \end{split}
 \end{align*}}
  \end{minipage}
\\
\hline
  Equations & 
  \begin{minipage}{.37\textwidth}{
  
  \begin{description}
  \item[$m=1$] $x^5-y^2$ ($A_4$);
  \item[$m=2$] $x_2(x_2^3-x_1^2)$ ($D_5$);
  \item[$m=3$] $\langle x_3(x_1-x_2),x_3^3-x_1x_2\rangle$;
  \item[$m\geq 4$] $\langle x_m^3-x_1x_2,\\x_i(x_j-x_k) \rangle_{\substack{i\in[m];\\j,k\in[m-1]\setminus\{i\}}}$
   \end{description}} 
  \end{minipage}
&
  \begin{minipage}{.39\textwidth}{
  
  \begin{description}
  \item[$m=2$] $y(y-x_1^3)$ ($A_5$);
  \item[$m=3$] $x_1x_2(x_2-x_1^2)$ ($D_6$);
  \item[$m\geq 4$] $\langle x_3(x_1^2-x_2),\\ x_i(x_j-x_k)\rangle_{\substack{i\in[m-1];\\j,k\in[m-1]\setminus\{i\}}}$
   \end{description}} 
  \end{minipage}
 \end{tabular}

\smallskip 

Notice that every singularity of type $I$ with $m\geq 2$ contains a genus one cusp, while every singularity of type $I\!I$ with $m\geq 3$ contains a tacnode; we call the corresponding branches \emph{special}. It will be relevant to our discussion that the special branch(es) of a type $I$ (resp. type $I\!I$) singularity  $\overline{C}$ has pre-image in $C$   attached to a Weierstrass (resp. two conjugate) point(s) of the contracted genus two subcurve, as follows from a semistable reduction analysis \cite[Prop~4.3,4.4]{B}.

In genus two, we are led to consider factorisation through non-reduced curves as well. Classically, a \emph{ribbon} is a scheme $R$ with underlying reduced structure $R_{\text{red}}\simeq\PP^1$, and square-zero ideal sheaf $\mathcal I=\mathcal I_{R_{\text{red}}/R}$ such that $\mathcal I$ is a line bundle on $R_{\text{red}}$. For us, an $(m_1,\ldots,m_k)$\emph{-tailed ribbon} consists of a ribbon of genus $2-k$ glued at $k$ general double points with a rational $m_i$-fold point ($i=1,\ldots,k$), see \cite[Example 2.21]{BC}. There are exact sequences:
 \begin{equation}0\to \OO_C\to \OO_R\oplus\bigoplus_{i=1}^k\OO_{\PP^1}^{\oplus m_i}\to \bigoplus_{i=1}^k(\mathbb C^{m_i-1}\oplus\mathbb C[\epsilon])\to 0,\end{equation}
 and
 \begin{equation}0\to\mathcal I=\OO_{\PP^1}(k-3)\to\OO_R\to\OO_{\PP^1}\to0.\end{equation}
                                                                        
 Local equations around an $m$-tailed point are given by:
 \[\mathbb C[\![x_1,\ldots,x_m,y]\!]/(x_ix_j,(x_i-x_j)y)_{1\leq i <j\leq m}.\]

 Roughly speaking, the general one-parameter smoothing of a ribbon is not normal (it is singular along $R$), and its semistable model is the normalisation, restricting to the hyperelliptic cover over the ribbon. In particular, the tail attaching points must correspond under the hyperelliptic cover.

\subsection{Irreducible components of $\overline{\pazocal M}_2(\PP^r,d)$}\label{sec:components}

We draw the weighted dual graph of the general member of all possible irreducible components of $\overline{\pazocal{M}}_2(\PP^r,d)$.
Our running convention is that a white vertex corresponds to a contracted component, a gray one to a genus two subcurve covering a line two-to-one, and a black vertex corresponds to a non-special subcurve. Vertices are labelled with their genus and weight.

\begin{enumerate}[leftmargin=.6cm]
 \item \emph{main} is the closure of the locus of maps from a smooth curve of genus two;
 \item $\pazocal D^{(d_1,\ldots,d_k)}=\left\{\Dk\right\}$
 \item ${}^{\rm{hyp}}\!\pazocal D^{(d_1,\ldots,d_k)}=\left\{\hypell\right\}$
 \item ${}^{d_0}\!\pazocal E^{(d_1,\ldots,d_k)}=\left\{\Ek\right\}$
 \item ${}^{(d_{1,1},\ldots,d_{1,k_1})}\!\pazocal E^{d_0}\pazocal E^{(d_{2,1},\ldots,d_{2,k_2})}=\left\{\Ekk\right\}$
 \item ${}^{\rm{br}=d_0}\!\pazocal E^{(d_1,\ldots,d_k)}=\left\{\pacman\right\}$
\end{enumerate}
This is taken from the first author's PhD thesis \cite{Bthesis}, and is implicit in \cite{HLN}.

In order to compute the dimension of the boundary components, it is useful to recall that they are the  image under clutching of products (fibered over $\PP^r$)  of irreducible components of  stable map  spaces of lower genus and/or degree. Since the gluing morphisms are finite, such a description allows us to compute the dimensions. 
\begin{itemize}[leftmargin=.6cm]
\item   For completeness, we recall that the dimension of \emph{main} coincides with the virtual dimension $\text{vdim}=(3-r)+d(r+1)$.
\item $\pazocal D^{(d_1,\ldots,d_k)}$ is the image under gluing of \[\oM_{2,k}\times\M{0}{1}{\PP^r}{d_1}\times_{\PP^r}\M{0}{1}{\PP^r}{d_2}\times_{\PP^r}\dots \times_{\PP^r}\M{0}{1}{\PP^r}{d_k}\]
 where the maps to $\mathbb P^r$ are the evaluation morphisms. Since all the spaces are smooth and the evaluation maps are flat, we can easily compute that the dimension of this component is $\dim_{(2)}=d(r+1) +r-k+3$
\item ${}^{\rm{hyp}}\!\pazocal D^{(d_1,\ldots,d_k)}$ is the image under gluing of
 \[\oM^{\text{main}}_{2,k}(\mathbb P^r,2)\times_{\PP^r}\M{0}{1}{\PP^r}{d_1}\times_{\PP^r}\M{0}{1}{\PP^r}{d_2}\times_{\PP^r}\dots \times_{\PP^r}\M{0}{1}{\PP^r}{d_k};\] 
  the component $\oM^{\text{main}}_{2,k}(\mathbb P^r,2)$ is actually smooth, but of dimension higher than the expected one, i.e. $2r+4+k$ ( we have to choose a line in $\mathbb P^r$, the $6$ ramification points, and the markings) instead of $r+5+k$ (which comes out of the virtual dimension computation); all the other spaces are smooth and so we can compute the dimension: $\dim_{(3)}= d(r+1)-k+2.$ 
 \item ${}^{d_0}\!\pazocal E^{(d_1,\ldots,d_k)}$ is the image of 
  \[\oM_{1,k+1}\times\oM^{\text{main}}_{1,1}(\mathbb P^r,d_0)\times_{\PP^r}\M{0}{1}{\PP^r}{d_1}\times_{\PP^r}\M{0}{1}{\PP^r}{d_2}\times_{\PP^r}\dots \times_{\PP^r}\M{0}{1}{\PP^r}{d_k};\]
  while $\M{1}{1}{\PP^r}{d_0}$ has several components of different dimensions, the main component is irreducible of the expected dimension; notice that we do not have to consider what happens when the maps degenerate further and becomes of degree zero on the genus one curve of weight $d_0$ since these cases are included in the irreducible component  $\pazocal D^{(d_1,\ldots,d_k)}$; the dimension is $\dim_{(4)}= d(r+1)-k+2.$ 
  \item The description for the remaining two types of components is analogous and thus we omit it; again the only remark to make for the last case is that for the genus one curve carrying weight $d_0$ we can restrict to main. The dimensions are  respectively $\dim_{(5)}= d(r+1)+r-(k_1+k_2)+1$ and    $\dim_{(6)}= d(r+1)-k+1$
\end{itemize}

\subsection{The algorithm to compute the intersection with main}

Given the  smooth modular compactification  constructed in \cite{BC}, we can characterise the closed points of  $\overline{\pazocal M}^{\text{main}}_{2,n}(\PP^r,d)$ as those in the image of the map $\VZ(\PP^r,d)\to \overline{\pazocal M}_{2,n}(\PP^r,d).$ 
To determine whether a closed point $[f\colon C\to\PP^r]$  such that $H^1(C,f^*\OO_{\PP^r}(1))\neq 0$ lies in the boundary of the \emph{main} component we proceed as follows:

\begin{enumerate}[label=(\roman*), leftmargin=.6cm]

\item Promote $C$ to a hyperelliptic admissible cover $C'\xrightarrow{\psi} T$ over the standard log point $\operatorname{Spec}(\mathbb N\to\mathbb C)$ \emph{in all possible ways}, where $C'$ is obtained by sprouting from $C$, and $T$ is a rational tree.

\begin{remark}\label{rem:lifttoadmissiblecover}
Enhancing the prestable curve $C$ to a hyperelliptic admissible cover over the standard log point involves a series of choices
of both an algebro-geometric and a tropical nature. First of all, there is in general more than one way to destabilise the curve $C$  to make it into the source of a hyperelliptic admissible cover $C'\to T$: combinatorially, this amounts  to choosing which irreducible components support the Weierstrass points; where exactly these lie gives some continuous moduli - notice that, given a generic smoothing of $C$, all this information is determined. Once the combinatorial type is fixed, it determines a cone in the \emph{tropical moduli space} of admissible covers: the cone is isomorphic to $\mathbb R_{\geq 0}^{|E(T)|}$, where $\tropT$ is the dual graph of $T$, and $E(\tropT)$ its edge set. In \cite{BC} we construct a \emph{finite} polyhedral subdivision of this cone such that the combinatorial and analytic type of the bubbling destabilization $\widetilde{C}\to C'$ and the Gorenstein contraction $\widetilde{C}\to\overline{C}$ are constant on every cone. The classically minded reader may think that the contraction depends on the ratios among the smoothing parameters of the nodes, although in a rather manageable fashion.
Let us remark that, by simply looking at the analytic type of the contraction $\widetilde{C}\to\overline{C} $ and at the induced weight on $\overline{C}$, we can often identify some analytic types through which factorisation is not possible, and thus 
discard several strata of the log modification $\widetilde{\pazocal A}_{2}$.
\end{remark}
 
\item  Check if the sections of $\operatorname{H}^0(\widetilde{f}^*\mathcal O_{\mathbb P^r}(1))$ descend to $\overline{C}$,where we have denoted by $\widetilde{f}$ the map induced by $f$ on the destabilisation $\widetilde{C}\to C$.
\end{enumerate}

By construction, if $f$ descends to $\overline C$, its degree is sufficiently high that it is unobstructed over the moduli space of aligned admissible covers (see the next lemma); since the latter is smooth, smoothability of the map follows from that of the curve.

For the benefit of the reader, we recall some conditions ensuring that a map from a Gorenstein curve $\bar f\colon\overline C\to\PP^r$ satisfies $H^1(\overline C,\bar{f}^*\OO_{\PP^r}(1))=0$ 
(together with the conditions for a map to descend to a ribbon).
  \begin{lem}\cite[Lemma~2.33, 2.34]{BC}\label{lem:unobstructdness}
  The obstructions $H^1(\overline C,\bar{f}^*\OO_{\PP^r}(1))$ vanish if:
 \begin{enumerate}[leftmargin=.5cm]
  \item \emph{Isolated singularities.} $\bar{f}$ has positive degree on every subcurve of genus one (so in particular on
   the special branches of a genus two singularity), and has degree at least three on every subcurve of genus two.
  \item \emph{Non-reduced curves.} If the ribbon $R$ is contracted, there are at least two tails (with distinct attaching points) of positive degree; moreover, if $k$ is the number of such tails, the images of their tangent vectors in $T_{\PP^r,\bar f(R)}$ span a linear subspace of dimension at most $k-2$.
  
  \noindent If the ribbon $R$ is hyperelliptic (i.e. mapped by $\bar{f}$  two-to-one to a line $\ell\subseteq\PP^r$), there is at least one tail of positive degree; moreover, if $k$ is the number of such tails, the images of their tangent vectors span a linear subspace of dimension at most $k-1$ after projecting away from $\ell$.
 \end{enumerate}
\end{lem}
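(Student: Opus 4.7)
The plan is to apply Serre duality on the Gorenstein curve $\overline C$:
\[
H^1(\overline C, L)^\vee \cong H^0(\overline C, \omega_{\overline C}\otimes L^{-1}), \qquad L:=\bar f^*\OO_{\PP^r}(1).
\]
Since $\overline C$ is Gorenstein, $\omega_{\overline C}$ is a line bundle, so it suffices to show that $\omega_{\overline C}\otimes L^{-1}$ has no global sections.

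\emph{Part (1): isolated singularities.} I would first compute the restriction of $\omega_{\overline C}$ to each irreducible component of $\overline C$ using the local parametrisations given in the table: for each singularity, one identifies the conductor inside the normalisation and reads off the adjunction correction. The upshot is the familiar nodal degree formula everywhere, except on the \emph{special} branches, which receive an extra twist (degree $2$ on the special branch of a type I singularity or on the genus-one branch of an elliptic $m$-fold point, multidegree $(1,1)$ on the two special branches of a type I\!I singularity, and analogous statements where two elliptic singularities share a branch). The positivity hypotheses are calibrated precisely so that on every irreducible component $B$ one has $\deg L|_B > \deg \omega_{\overline C}|_B$. A global section of $\omega_{\overline C}\otimes L^{-1}$ would then restrict to a section of a line bundle of strictly negative degree on each component, hence vanish identically, and the Gorenstein gluing forces the global section to be zero.

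\emph{Part (2): ribbons.} Tensor the first short exact sequence by $L$ and take cohomology. The components of the middle term are rational, and $L$ has positive degree on every non-contracted tail, so their $H^1$ vanish; hence $H^1(\overline C,L)$ is the cokernel of
\[
H^0(L|_R)\oplus\bigoplus_{i=1}^k H^0(L|_{T_i})^{\oplus m_i}\longrightarrow\bigoplus_{i=1}^k\left(\mathbb C^{m_i-1}\oplus\mathbb C[\epsilon]\right).
\]
Surjection onto the $\mathbb C^{m_i-1}$ summands is the classical tangency condition at each rational $m_i$-fold attaching point. Surjection onto the $\mathbb C[\epsilon]$ summands is the new ribbon contribution, matching the ribbon-direction derivative of $\bar f|_R$ at each attaching point with the tail's tangent vector there. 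Using the second short exact sequence $0\to\mathcal I\to\OO_R\to\OO_{R_{\text{red}}}\to 0$ with $\mathcal I=\OO_{\PP^1}(k-3)$, I identify these ribbon derivatives with global sections of $\mathcal I$ in the contracted case: a space of dimension $k-2$, which is precisely the number of independent linear relations that must hold among the $k$ tangent directions in $T_{\PP^r,\bar f(R)}$. The hyperelliptic case is analogous after projecting away from the line $\ell$, accounting for the shift from $k-2$ to $k-1$.

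\emph{Main obstacle.} The real difficulty is in part (2): identifying the connecting map $\bigoplus_i \mathbb C[\epsilon]\to H^1(\overline C,L)$ with the ribbon-derivative map and making its compatibility with the second exact sequence explicit. This requires a direct local-coordinate calculation at each tail attaching point and a careful treatment of how $H^0(L|_R)$ sits inside $H^0(L|_{R_{\text{red}}})$ through $\mathcal I$. By contrast, part (1) is essentially bookkeeping once the degree computation of $\omega_{\overline C}$ on each branch is in hand.
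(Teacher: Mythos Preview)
The paper does not prove this lemma; it is quoted from \cite[Lemma~2.33, 2.34]{BC}. So there is no in-paper argument to compare against, but your outline matches the natural strategy and presumably the one in \cite{BC}: Serre duality for part~(1), and the two displayed short exact sequences for part~(2).

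There is, however, a genuine gap in your part~(1). The claim that the hypotheses force $\deg L|_B > \deg \omega_{\overline C}|_B$ on \emph{every irreducible component} $B$ is false in general. For instance, take a type~$I$ singularity with $m=2$: the special (cuspidal) branch $B_2$ has $\deg\omega_{\overline C}|_{B_2}=2$, but $B_2$ alone is only a genus-one subcurve, so the hypothesis guarantees merely $\deg L|_{B_2}\geq 1$. One can have $\deg L|_{B_1}=2$, $\deg L|_{B_2}=1$, and then $\omega_{\overline C}\otimes L^{-1}$ has degree $+1$ on $B_2$. The hypotheses are stated for \emph{subcurves} (connected unions of components), not single components, and the vanishing argument must use them that way. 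The standard fix: given a nonzero $s\in H^0(\omega_{\overline C}\otimes L^{-1})$, let $Z$ be the union of components on which $s$ does not vanish identically; then $s|_Z$ vanishes along the attachment $Z\cap\overline{C\setminus Z}$, and via adjunction for Gorenstein curves one gets a nonzero section of $\omega_Z\otimes (L|_Z)^{-1}$, forcing $\deg L|_Z\leq 2g(Z)-2$, which contradicts the hypotheses for each possible value of $g(Z)$. Making adjunction precise at the non-nodal points (where a branch need not be Cartier in $\overline C$) is exactly where the conductor computation you allude to enters, so the bookkeeping is a bit more than you suggest.

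Your part~(2) is on the right track and correctly identifies the main difficulty as the explicit description of the ribbon-direction map via the ideal sequence $0\to\OO_{\PP^1}(k-3)\to\OO_R\to\OO_{\PP^1}\to 0$.
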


Let $\pazocal Z$ denote an irreducible component of $\overline{\pazocal{M}}_2(\PP^r,d)$ (see \S \ref{sec:components} for a complete list), and let $\pazocal Z^\circ$ denote the open locus where the weighted dual graph of the curve is generic, i.e. as depicted in the previous section.

\begin{lem}\label{lemma:irrcomponents}
 Every component of $\overline{\pazocal{M}}_2(\PP^r,d)^\text{main}\cap\pazocal Z$ is in the closure of a component of $\overline{\pazocal{M}}_2(\PP^r,d)^\text{main}\cap \pazocal Z^\circ$.
\end{lem}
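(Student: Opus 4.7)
The approach is to lift to the modular desingularisation $\VZ(\PP^r,d)$ and use its log smoothness to perform partial smoothings. Given an irreducible component $W$ of $\overline{\pazocal M}_2(\PP^r,d)^{\text{main}}\cap\pazocal Z$ and a point $[f\colon C\to \PP^r]\in W$, I plan to produce a one-parameter family of stable maps in main through $[f]$ whose generic fibre lies in $\pazocal Z^\circ$; since $W$ is a component of a closed subset and $\pazocal Z^\circ$ is open in $\pazocal Z$, this forces $W$ to be the closure of a component of $\overline{\pazocal M}_2(\PP^r,d)^{\text{main}}\cap\pazocal Z^\circ$, as required.

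First, I invoke the main theorem from \cite{BC} to lift $[f]$ to a point $\xi\in \VZ(\PP^r,d)$ carrying the data of an admissible cover $C'\to T$, a bubbling $\widetilde C\to C'$, and a Gorenstein contraction $\widetilde C\to \overline C$ through which $f$ factors non-specially. Let $\Gamma$ be the weighted dual graph of $[f]$ and $\Gamma^\circ$ that of the generic point of $\pazocal Z$, as listed in \S\ref{sec:components}. By assumption $\Gamma$ is a degeneration of $\Gamma^\circ$, i.e.\ $\Gamma^\circ$ is obtained from $\Gamma$ by contracting a subforest of weight-zero components; I call the edges of $\Gamma$ that disappear under this contraction \emph{non-structural}, as smoothing the corresponding nodes of $C$ does not alter the combinatorial type defining $\pazocal Z$.

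The next step is to smoothen precisely the non-structural nodes while leaving the structural combinatorics fixed. Log smoothness of $\VZ(\PP^r,d)$ supplies, for every subset $S$ of nodes of $C'$, a one-parameter deformation of $\xi$ inside $\VZ$ which smoothens the nodes in $S$, preserves the others, and simultaneously deforms the superimposed admissible-cover, bubbling, and contraction data. Taking $S$ to be the set of non-structural nodes, the generic fibre of the resulting family has source dual graph $\Gamma^\circ$; pushing forward to $\overline{\pazocal M}_2(\PP^r,d)^{\text{main}}$ yields a one-parameter family in main through $[f]$ whose generic fibre lies in $\pazocal Z^\circ$.

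The main delicacy, and the step requiring the most care in a complete write-up, is to verify that the partial smoothing inside $\VZ$ lands in the preimage of $\pazocal Z^\circ$ rather than crossing into a different component. This requires matching the stratification of $\VZ$ by combinatorial type of cover, bubbling, and contraction (as reviewed in Remark~\ref{rem:lifttoadmissiblecover}) with the stratification of $\overline{\pazocal M}_2(\PP^r,d)$ by weighted dual graph. The enumeration in \S\ref{sec:components} makes this matching tractable: each $\pazocal Z$ is pinned down by its generic weighted dual graph, and the forgetful map $\VZ\to\overline{\pazocal M}_2(\PP^r,d)$ is compatible with edge contraction, so that smoothing only the non-structural nodes keeps the image inside $\pazocal Z^\circ$ by construction.
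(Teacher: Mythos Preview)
Your overall strategy of lifting to $\VZ(\PP^r,d)$ and performing a partial smoothing is correct and matches the paper's approach. However, there is a genuine gap in your key step.

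You claim that log smoothness of $\VZ(\PP^r,d)$ provides, for \emph{every} subset $S$ of nodes of $C'$, a one-parameter deformation smoothing precisely the nodes in $S$. This is false. The log structure on $\VZ$ is locally free but not freely generated by the nodes: the alignment imposes relations among the edge lengths (equivalently, among the smoothing parameters), so only certain subsets of nodes can be smoothed independently. The paper makes exactly this point: ``not all nodes of the source curve can be smoothed independently due to the alignment.''

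What the paper does instead is identify, via the aligning function $\lambda$, which collections of nodes \emph{can} be smoothed simultaneously---namely those whose edges lie in the strict interior of a connected component of the support of $\lambda$, or entirely outside its support. The substance of the argument is then to check that after smoothing all such nodes, the residual dual graph is precisely one of the generic graphs listed in \S\ref{sec:components}: every connected component of the support of $\lambda$ collapses to a single vertex, and the surviving vertices (on the circle) carry positive weight. So one does not choose the non-structural nodes and then smooth them; one smooths what the alignment allows and verifies a posteriori that this lands in $\pazocal Z^\circ$. Your ``main delicacy'' paragraph gestures at a compatibility check, but the actual obstruction---the constraint imposed by $\lambda$---and its resolution are missing.
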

\begin{proof}
For the proof of this Lemma we referfreely to the results of \cite{BC}.
Let $\widetilde{\mathcal A}_2(\PP^r,d)$ be the space of all aligned admissible maps.  The main component of the latter is $\VZ(\PP^r,d)$, and it is unobstructed over the moduli space of aligned weighted admissible covers $\widetilde{\mathcal A}_2$.
Let $\widetilde{\pazocal Z}$ be a boundary component of $\widetilde{\mathcal A}_2(\PP^r,d)$. Note that this space may have more irreducible components than the space of stable maps, due to the moduli of lifting $C$ to an admissible cover $C'\to T$.
Still, it is enough to argue that $\widetilde{\pazocal Z}\cap \VZ(\PP^r,d)$  is contained in the closure of  $\widetilde{\pazocal Z}^{\circ}\cap \VZ(\PP^r,d)$.

So, let us consider $[\widetilde{f}]\in\widetilde{\pazocal Z}\cap \VZ(\PP^r,d)$. We observe that not all nodes of the source curve can be smoothed independently due to the alignment. On the other hand, all the nodes corresponding to edges of the dual graph of the source curve which are identified by the aligning function $\lambda$ (see \cite[\S~3]{BC}) can be smoothed simultaneously. In particular, 
 all the nodes corresponding to edges of the dual graph which are entirely contained in the strict interior of a connected component of the support of $\lambda$, respectively outside its support,  can be smoothed simultaneously.

 Tropically, smoothing has the effect of setting the corresponding edge lengths to zero. We are left with a generic dual graph, in which every connected component of the support of $\lambda$ contains a unique vertex, and all remaining vertices are on the circle and have positive weight. Comparing with \S \ref{sec:components} this concludes the proof.
\end{proof}

\section{Practice}
\subsection{Classification of singular plane quartics}
Our analysis is based on Chung-Man Hui's thesis \cite{Hui}, in which he classified the singularities of plane quartics, their normal forms, and the dimension of the corresponding strata.  For the benefit of the reader, we collect the results of \cite{Hui}  in
the table below. 
From left to right we indicate: the geometric genus (in the reducible case the geometric genus of the irreducible components), the number of singular points, the analytic singularities that can occur, and the dimension of the strata in the linear system of quartics where such singularities appear; we use these in the computation of $\dim \pazocal Z\cap\overline{\pazocal M}^{\text{main}}$ below.

\smallskip 

 \begin{tabular}{c||c|c|c|c}
  & $g_g$ & $|C^{\text{sing}}|$ & Analytic Sing & dim strata\\
  \hline
  \hline
   & 2 & 1 & $A_1, \; A_2$ & $13,\; 12$\\

   & 1 & 1 & $A_3,\; A_4$ & $11,\; 10$\\
     
irreducible    & 1 & 2& $A_1^2,\; A_1A_2,\; A_2^2$ & $12,\; 11,\; 10$\\
      
   quartics & 0 & 1& $A_5,\; A_6,\; D_4,\; D_5,\; E_6$ &  $9,\; 8,\; 10,\; 9,\; 8$\\
    
      & 0 & 2& $A_1A_3,\; A_2A_3,\; A_1A_4,\; A_2A_4$ &  $10,\; 9,\; 9,\; 8$\\
          
      & 0 & 3& $A_1^3,\; A_1^2A_2,\; A_1A_2^2,\; A_2^3$ &  $11,\; 10,\; 9,\; 8$\\
  \hline
  \hline
  
    & $1\sqcup 0$& 1 & $A_5$ & $9$\\
    & $1\sqcup 0$& 2 & $A_1A_3$ & $10$\\
 cubic   & $1\sqcup 0$& 3 & $A_1^3$ & $11$\\
   and & $0\sqcup 0$& 1 & $D_6, \; E_7$ & $8,\; 7$\\
  line  & $0\sqcup 0$& 2 & $A_1D_4,\; A_1A_5,\; A_1D_5,\;A_2A_5$ & $9,\; 8,\;8,\;  7$\\
    & $0\sqcup 0$& 3& $A_1^2A_3,\; A_1A_2 A_3$ & $9,\; 8$\\
     & $0\sqcup 0$& 4& $A_1^4,\; A_1^3A_2$ & $10,\; 9$\\
  \hline
  \hline
  
  &$0\sqcup 0$& 1 & $A_7$ & $ 7$\\
 two&  $0\sqcup 0$& 2 & $A_1A_5,\; A_3^2$ & $ 8,\; 8$\\
conics   & $0\sqcup 0$& 3 & $A_1^2A_3$ & $ 8$\\
   & $0\sqcup 0$& 3 & $A_1^4$ & $ 10$\\
  \hline
  \hline
  & $0\sqcup 0\sqcup 0$& 2 & $A_1D_6,$ & $ 7$\\
 conic and & $0\sqcup 0\sqcup 0$& 3 & $A_1^2D_4,$ & $ 8$\\
two lines  & $0\sqcup 0\sqcup 0$& 4 & $A_1^3A_3$ & $ 8$\\
    & $0\sqcup 0\sqcup 0$& 5 & $A_1^5$ & $ 9$\\
   \hline
  \hline
    & $0\sqcup 0\sqcup 0 \sqcup 0$& 1 &planar $4$-fold & $ 6$\\
  four lines & $0\sqcup 0\sqcup 0 \sqcup 0$& 4 & $A_1^3D_4$& $ 7$\\
   & $0\sqcup 0\sqcup 0 \sqcup 0$& 6 & $A_1^4$ & $ 8$\\

  \end{tabular}
  \smallskip 
  
  Moreover, the following non-reduced singularities appear: a conic and generic double line ( this stratum has $\dim=7$); a conic and a tangent double line ( this stratum has $\dim=6$); three generic lines, one of which is a double line, ( this stratum has $\dim=6$);
  three concurrent lines,  one of which is a double line, ( this stratum has $\dim=5$); a double conic  (this stratum has $\dim=5$); two double lines ( this stratum has $\dim=4$); two lines, one of which is triple ( this stratum has $\dim=4$); a quadruple line  (this stratum has $\dim=2$).

\subsection{Applying the algorithm to  $\overline{\pazocal M}_2(\PP^2,4)$ }

The moduli space $\overline{\pazocal M}_2(\PP^2,4)$ has more than twenty irreducible components; in this note we analyse them and their intersection with \emph{main}. In the illustrations we adopt the following \emph{convention}: gray components are contracted, dashed lines come from sprouting, blue points are branch points of a finite cover, blue components correspond to the hyperelliptic cover of a genus two curve.
We warn the reader that, for the sake of conciseness, after giving full details for the computations in the first couple of instances, we will largely omit the parts which are similar in the remaining cases.

\begin{description}[leftmargin=0cm]
\setlength\itemsep{0.5cm}
 \item[Main] Plane quartics have arithmetic genus $3$. The main component of $\overline{\pazocal M}_2(\PP^2,4)$ parametrises (the normalisation of) nodal quartics. It has dimension $13$.
 
 \begin{figure}[h]

   \begin{tikzpicture}[scale=.4]
   
    
 \coordinate (A) at (2,4);
 \coordinate (B) at (1.7,3.8);
 \coordinate (C) at   (2,3.6);
 \coordinate (D) at (2.3,3.4) ;
  \coordinate (E) at (2,3.2) ;
  \coordinate (F) at (1.7,3);
  \coordinate (G) at  (2,2.8);
  \coordinate (H1) at (2.3,4.3);
 \coordinate (H2) at (2.4,4.8);
\coordinate (J1) at (2.3,2.5);
 \coordinate (J2) at (2.4,2);
 
 \coordinate (Fs) at (3,3.5);
  \coordinate (Ft) at (4.6,3.5);
   \coordinate (Flabel) at (3.8,3.5);

\begin{scope}[every coordinate/.style={shift={(0,0)}}]
\draw[thick] ([c]A) .. controls ([c]B) ..
       ([c]C) .. controls ([c]D)  .. ([c]E) .. controls ([c]F).. ([c]G);
      \draw[thick]  plot [smooth, tension=3] coordinates { ([c]A) ([c]H1) ([c]H2)};
       \draw[thick]  plot [smooth, tension=3] coordinates { ([c]G) ([c]J1) ([c]J2)};
       \node at ([c]J2) [below] {\scriptsize{$g=2$}};
       \draw[->,thick] ([c]Fs) to ([c]Ft);
        \node at ([c]Flabel) [above] {\scriptsize{$f$}};
       \end{scope}
 \coordinate(Q1) at  (1,5);
 \coordinate (Q2) at (10,5);
 \coordinate (Q3) at (10,1 );
 \coordinate (Q4) at (1,1);

 \draw (Q1)-- ( Q2)--(Q3)--(Q4)--(Q1);        

 \coordinate (Am) at (0,0);
 \coordinate (Bm) at(0.2,-.3);
 \coordinate (Cm) at  ( .4,0);
 \coordinate (Dm) at (.6,.3)  ;
  \coordinate (Em) at (.8,0) ;
  \coordinate (Fm) at (1,-.3);
  \coordinate (Gm) at  (1.2,0);
  \coordinate (H1m) at (-.5,.3);
 \coordinate (H2m) at (-.2,.2);
\coordinate (J1m) at (1.4,.2);
 \coordinate (J2m) at (1.7,.3);
 \coordinate (N1m) at (3.2,-.3);
 \coordinate (N2m) at (.2,-.3);
  \coordinate (N3m) at (3,.3);
 \coordinate(P1) at  (-0.2,1);
 \coordinate (P2) at ( 3.5,1);
 \coordinate (P3) at (2.5,-1 );
 \coordinate (P4) at (-1.2,-1);
 
 \begin{scope}[every coordinate/.style={shift={(6,3)}}]
 \draw[thick]  plot [smooth, tension=3] coordinates {([c]H1m) ([c]H2m) ([c]Am)  };
\draw[thick] ([c]Am) .. controls ([c]Bm) ..
       ([c]Cm) .. controls ([c]Dm)  .. ([c]Em) .. controls ([c]Fm).. ([c]Gm);
    \draw[thick]  plot [smooth, tension=3] coordinates { ([c]Gm) ([c]J1m) ([c]J2m)  };   
     \draw[thick]   ([c]J2m).. controls ([c]N1m) and ([c]N2m).. ([c]N3m);
 \draw[dashed] ([c]P1)-- ( [c]P2)--([c]P3)--([c]P4)--([c]P1);  
    \node at ([c]Gm) [below] {\scriptsize{$p_a=3$}};
    \end{scope}       
\end{tikzpicture}
	\caption{$\overline{\pazocal M}_2(\PP^2,4)^{\text{main}}$, $\dim=13$}
	\label{main}
\end{figure}

 \item[${\pazocal D^{(4)}}$ component] The general member contracts the genus two core, and restricts on the rational tail to the normalisation of a three-nodal quartic (the core can be contracted to any point of the latter). The dimension is $16$. 
  The intersection with \emph{main} has two components.
 
   \begin{figure}[h]
   \begin{tikzpicture}[scale=.4]
 \coordinate (A) at (2,4);
 \coordinate (B) at (1.7,3.8);
 \coordinate (C) at   (2,3.6);
 \coordinate (D) at (2.3,3.4) ;
  \coordinate (E) at (2,3.2) ;
  \coordinate (F) at (1.7,3);
  \coordinate (G) at  (2,2.8);
  \coordinate (H1) at (2.3,4.3);
 \coordinate (H2) at (2.4,4.8);
 \coordinate (R1) at (1.9,4.3);
 \coordinate (R2) at (3.7,4.5);
 \coordinate (RT1) at (2.6,4);
 \coordinate (RT2) at (4.2,5.1);

  \coordinate (R1b) at (2.1,2.2);
 \coordinate (R2b) at (3.5,1.8);
 \coordinate (J1) at (2.3,2.4);
 \coordinate (J2) at (2.4,2);
 
   \coordinate(fact1) at (2.4,1);
   \coordinate(fact2) at (3.1,-.4);
   
 \coordinate (Fs) at (3,3.5);
  \coordinate (Ft) at (4.6,3.5);
   \coordinate (Flabel) at (3.8,3.5);
   
    \coordinate (Fbs) at (5.3,-.3);
  \coordinate (Fbt) at (6.4,1);
    \coordinate (Fblabel) at (5.9,.1);
   
    
      \coordinate(T1) at (3,-.9);
    \coordinate(T2) at (5.3,-.9);
    \coordinate (O) at (4,-.9);
    \coordinate (O1) at (3,-1.9);
   \coordinate (O2) at (5,.1);
    \coordinate (O3) at (3.5,-1.03);
   \coordinate (O4) at (4.5,-.77);

    \coordinate(S1) at (3.4,-.9);
    \coordinate(S2) at (3.6,-.9);
    \coordinate(U1) at (5,.5);
    \coordinate(U2) at (5,-1.5);
    \coordinate(TT1) at (4,-.2);
    \coordinate(TT2) at (5.7,-.7);

   \begin{scope}[every coordinate/.style={shift={(0,0)}}]
\draw[draw=gray] ([c]A) .. controls ([c]B) ..
       ([c]C) .. controls ([c]D)  .. ([c]E) .. controls ([c]F).. ([c]G);
      \draw[draw=gray]  plot [smooth, tension=3] coordinates { ([c]A) ([c]H1) ([c]H2)};
       \draw[draw=gray]  plot [smooth, tension=3] coordinates { ([c]G) ([c]J1) ([c]J2)};
       \draw[thick] ([c]R1)node[left]{\scriptsize{$T$}} to ([c]R2);
       \draw[->,thick] ([c]Fs) to ([c]Ft);
        \node at ([c]Flabel) [above] {\scriptsize{$f$}};
             \node at ([c]J2) [below] {\begin{color}{gray}\scriptsize{$Z,g=2$}\end{color}};
 \end{scope}

 
\coordinate(Q1) at  (.5,5.5);
 \coordinate (Q2) at (10,5.5);
 \coordinate (Q3) at (10,1);
 \coordinate (Q4) at (.5,1);
 \coordinate (Qmiddle) at (5.25,1);
 \coordinate (Q3long) at (10,-2.5 );
 \coordinate (Q4long) at (.5,-2.5);
\coordinate (Qlongmiddle) at (5.5,-2.5);

  \begin{scope}[every coordinate/.style={shift={(0,0)}}]     
 \draw (Q1)-- (Q2)--(Q3)--(Q4)--(Q1);   
 \end{scope}


 \coordinate (An) at (-.5,.5);
 \coordinate (B1m) at(2,-.2);
  \coordinate (B2m) at(-1.8,-.2);
 \coordinate (Cn) at  (1,.5);
 \coordinate (D1n) at (3,-.2)  ;
  \coordinate (D2n) at (-.5,-.2) ;
  \coordinate (En) at (2,.5);
  \coordinate (F1n) at  (4,-.2);
   \coordinate (F2n) at (1,-.2);
   \coordinate (Gn) at (3,.5);

 \coordinate(P1) at  (-0.2,1);
 \coordinate (P2) at ( 3.5,1);
 \coordinate (P3) at (2.5,-1 );
 \coordinate (P4) at (-1.2,-1);
 
 \begin{scope}[every coordinate/.style={shift={(6,3)}}]

\draw[thick] ([c]An) .. controls ([c]B1m) and ([c]B2m) ..
       ([c]Cn) .. controls ([c]D1n) and ([c]D2n)  .. ([c]En) .. controls ([c]F1n) and ([c]F2n)  .. ([c]Gn);
   
 \draw[dashed] ([c]P1)-- ( [c]P2)--([c]P3)--([c]P4)--([c]P1);  
    
     \fill[gray] ([c]Cn) circle (3pt);
          \node at ([c]Cn) [above] {\begin{color}{gray}\scriptsize{$f(Z)$}\end{color}};

    \end{scope}

\begin{scope}[every coordinate/.style={shift={(14,0)}}]
\draw[thick] ([c]A) .. controls ([c]B) ..
       ([c]C) .. controls ([c]D)  .. ([c]E) .. controls ([c]F).. ([c]G);
      \draw[thick]  plot [smooth, tension=3] coordinates { ([c]A) ([c]H1) ([c]H2)};
       \draw[thick]  plot [smooth, tension=3] coordinates { ([c]G) ([c]J1) ([c]J2)};
       \draw[->,thick] ([c]Fs) to ([c]Ft);
        \node at ([c]Flabel) [above] {\scriptsize{$f$}};
       \end{scope}

  \begin{scope}[every coordinate/.style={shift={(14,0)}}]     
 \draw ([c]Q1)-- ( [c]Q2)--([c]Q3)--([c]Q4)--([c]Q1);   
 \end{scope}
 

 \coordinate (Am) at (0,0);
 \coordinate (Bm) at(0.2,-.3);
 \coordinate (Cm) at  ( .4,0);
 \coordinate (Dm) at (.6,.3)  ;
  \coordinate (Em) at (.8,0) ;
  \coordinate (Fm) at (1,-.3);
  \coordinate (Gm) at  (1.2,0);
  \coordinate (H1m) at (-.5,.3);
 \coordinate (H2m) at (-.2,.2);
\coordinate (J1m) at (1.4,.2);
 \coordinate (J2m) at (1.7,.3);
 \coordinate (N1m) at (3.2,-.3);
 \coordinate (N2m) at (.2,-.3);
  \coordinate (N3m) at (3,.3);
 \coordinate(P1) at  (-0.2,1);
 \coordinate (P2) at ( 3.5,1);
 \coordinate (P3) at (2.5,-1 );
 \coordinate (P4) at (-1.2,-1);
 
 \begin{scope}[every coordinate/.style={shift={(20,3)}}]
 
 \draw[thick]  plot [smooth, tension=3] coordinates {([c]H1m) ([c]H2m) ([c]Am)  };
 
\draw[thick] ([c]Am) .. controls ([c]Bm) ..
       ([c]Cm) .. controls ([c]Dm)  .. ([c]Em) .. controls ([c]Fm).. ([c]Gm);
    \draw[thick]  plot [smooth, tension=3] coordinates { ([c]Gm) ([c]J1m) ([c]J2m)  };   
     \draw[thick]   ([c]J2m).. controls ([c]N1m) and ([c]N2m).. ([c]N3m);
 \draw[dashed] ([c]P1)-- ( [c]P2)--([c]P3)--([c]P4)--([c]P1);  
    \end{scope}   
       
    \draw (7.5,1)--(8.5,-.5);
\draw (7.5,1)--(19,-.5);

 \node at (12.3,3) [above] { $\cap$};

\node at (5,1) [below] {\scriptsize{$\pazocal D^{(4)}$}};

 \begin{scope}[every coordinate/.style={shift={(3,-6)}}]
\draw[draw=gray] ([c]A) .. controls ([c]B) ..
       ([c]C) .. controls ([c]D)  .. ([c]E) .. controls ([c]F).. ([c]G);
      \draw[draw=gray]  plot [smooth, tension=3] coordinates { ([c]A) ([c]H1) ([c]H2)};
       \draw[draw=gray]  plot [smooth, tension=3] coordinates { ([c]G) ([c]J1) ([c]J2)};
       \draw[thick] ([c]R1) to ([c]R2);
          \draw[dashed] ([c]R1b) to ([c]R2b);
       \draw[->,thick] ([c]Fs) to ([c]Ft);
        \node at ([c]Flabel) [above] {\scriptsize{$f$}};
             \node at ([c]J2) [below] {\begin{color}{gray}\scriptsize{$Z$}\end{color}};
             \draw[->,dotted] ([c]fact1) to ([c]fact2);
             \node at ([c]R1) [left] {\scriptsize{$T$}};
             \node at ([c]R1b) [left] {\scriptsize{$\overline{T}$}};
                  \draw[dashed] ([c]T1) to ([c]T2);
                  \draw[thick]  plot [smooth ] coordinates {([c]O1) ([c]O3) ([c]O)} ;
  \draw[thick]  plot [smooth ] coordinates {([c]O) ([c]O4) ([c]O2) };
            \draw[->,thick] ([c]Fbs) to ([c]Fbt);         
 \node at ([c]Fblabel) [right] {\scriptsize{$\bar{f}$}};
 \end{scope}

  \begin{scope}[every coordinate/.style={shift={(3,-6)}}]     
 \draw ([c]Q1)-- ( [c]Q2)--([c]Q3long)--([c]Q4long)--([c]Q1);
 \node at ([c]Qlongmiddle) [below] {\scriptsize{$\pazocal D^{(4)}\cap\overline{\pazocal M}^{\text{main}}$, general core}};
 \end{scope}
  
\coordinate (AE0) at (1,-.3);
\coordinate (AE1) at (1,0);
\coordinate (AE2) at (1.1,0.12);
\coordinate (AE3) at (.9,0.12);
\coordinate (AE4) at (1.5,0.4);
\coordinate (AE5) at (.5,0.4);
\coordinate (AE6) at (1.67,0.58);
\coordinate (AE7) at (.33,0.58);

\coordinate (AE8) at (0,.7);
\coordinate (AE9) at (2,1);

 \coordinate(P1) at  (-0.2,1);
 \coordinate (P2) at ( 3.5,1);
 \coordinate (P3) at (2.5,-1 );
 \coordinate (P4) at (-1.2,-1);
 
 \begin{scope}[every coordinate/.style={shift={(9,-3)}}]
 \draw[thick] ([c]AE0) --([c]AE1);
 \draw[thick]  plot [smooth ] coordinates {([c]AE1) ([c]AE2) ([c]AE4) ([c]AE6) ([c]AE9)};
  \draw[thick]  plot [smooth ] coordinates {([c]AE1) ([c]AE3) ([c]AE5) ([c]AE7) ([c]AE8)};
\draw[dashed] ([c]P1)-- ( [c]P2)--([c]P3)--([c]P4)--([c]P1); 
     \fill[gray] ([c]AE0) circle (3pt);
      \node at ([c]AE1) [xshift=.5cm] {\begin{color}{gray}\scriptsize{$f(Z)$}\end{color}};
            \node at ([c]AE0) [below,left] {\scriptsize{$E_6$}};

    \end{scope}   

\begin{scope}[every coordinate/.style={shift={(14,-6)}}]
\draw[draw=gray] ([c]A) .. controls ([c]B) ..
       ([c]C) .. controls ([c]D)  .. ([c]E) .. controls ([c]F).. ([c]G);
      \draw[draw=gray]  plot [smooth, tension=3] coordinates { ([c]A) ([c]H1) ([c]H2)};
       \draw[draw=gray]  plot [smooth, tension=3] coordinates { ([c]G) ([c]J1) ([c]J2)};
       \draw[thick] ([c]R1) to ([c]R2);
       \draw[->,thick] ([c]Fs) to ([c]Ft);
        \node at ([c]Flabel) [above] {\scriptsize{$f$}};
             \node at ([c]J2) [below] {\begin{color}{gray}\scriptsize{$Z$}\end{color}};
             \draw[->,dotted] ([c]fact1) to ([c]fact2);
             
             \node at ([c]R1) [below] {\scriptsize{$T=\overline T$}};

   \draw[thick]  ([c]S1)--([c]S2);
  \draw[thick] ([c]S2) parabola ([c]U1);
    \draw[thick] ([c]S2) parabola ([c]U2);


            \draw[->,thick] ([c]Fbs) to ([c]Fbt);         
 \node at ([c]Fblabel) [right] {\scriptsize{$\bar{f}$}};
 \end{scope}

  \begin{scope}[every coordinate/.style={shift={(14,-6)}}]     
 \draw ([c]Q1)-- ( [c]Q2)--([c]Q3long)--([c]Q4long)--([c]Q1);
 \node at ([c]Qlongmiddle) [below] {\scriptsize{$\pazocal D^{(4)}\cap\overline{\pazocal M}^{\text{main}}$, Weierstrass tail}};
 \end{scope}
\coordinate (AA0) at (0,.7);
\coordinate (AA1) at (0,.5);
\coordinate (AA2) at (-.8,-.3);
\coordinate (AA3) at (.8,-.3);
\coordinate (AA4) at (3.5,1);
\coordinate (AA5) at (.5,1);
\coordinate (AA6) at (1.6,-.5);

 \coordinate(P1) at  (-0.2,1);
 \coordinate (P2) at ( 3.5,1);
 \coordinate (P3) at (2.5,-1 );
 \coordinate (P4) at (-1.2,-1);
 
 \begin{scope}[every coordinate/.style={shift={(20,-3)}}]
 \draw[thick] ([c]AA0) parabola ([c]AA1); 
 \draw[thick] ([c]AA2) parabola ([c]AA1); 
  \draw[thick] ([c]AA3) parabola ([c]AA1); 
   \draw[thick] ([c]AA3) .. controls ([c]AA4) and ([c]AA5).. ([c]AA6); 
     
\draw[dashed] ([c]P1)-- ( [c]P2)--([c]P3)--([c]P4)--([c]P1); 
     \fill[gray] ([c]AA0) circle (3pt);
      \node at ([c]AA0) [xshift=.35cm,yshift=-.05cm] {\begin{color}{gray}\scriptsize{$f(Z)$}\end{color}};
            \node at ([c]AA0) [above=.05cm] {\scriptsize{$A_4$}};
	    \node at ([c]AA3) [below right,xshift=-.05cm,yshift=.1cm] {\scriptsize{$A_1$}};
    \end{scope}

\end{tikzpicture}

	\caption{$\pazocal{D}^{(4)}\,\dim=16$; $\pazocal D^{(4)}\cap\overline{\pazocal M}^{\text{main}},\dim=12$}
	\label{D4}
\end{figure}

There are only two types of admissible cover which appear enhancing $C$; moreover, once the lift to an admissible cover is chosen, there is no subdivision to be made (since there is only one tropical parameter), and the log structure is the minimal one. 
\begin{itemize}[leftmargin=.5cm]
\item [$\square$]  the node $T\cap Z$ is generic, then sprouting the conjugate point and contracting $Z$ we obtain an $A_5$ dangling singularity (contracting the dangling $\mathbb P^1$ gives the unibranch non Goreinstein singularity with analytic local ring $\mathbb C[\![t^3,t^4,t^5]\!]$), 
\item[$\square$]  the node is Weierstrass and the choice of the other ramification point on $T$ (namely the  choice of lifts to an admissible cover) determines the contraction to an $A_4$ singularity (there is an $\mathbb A_1$ worth of moduli of attaching data in this second case \cite{B})\footnote{The two cases correspond to different irreducible components of $\widetilde{\pazocal A}_2(\PP^2,4)$ lifting $\pazocal{D}^{(4)}$: combinatorially, they are distinguished by the sixth Weierstrass point lying either on the core or on the tail.}.
\end{itemize}

If $f$ factors through $\overline{f}\colon{\overline{C}}\to\mathbb P^2$ for  $\overline{C}$ one of the above, then $\overline{C}$ is a partial normalization for $f(C)$, which we can assume reduced for $f$ with generic weighted dual graph (by Lemma~\ref{lemma:irrcomponents}). Looking at the classification of irreducible quartics of geometric genus zero we deduce that the only possibilities are: 
  \begin{itemize}[leftmargin=.5cm]
  \item either the quartic contains an $E_6$-singularity (with local equations $x^4+y^3$) : intersecting $f(C)$ with the coordinate lines we see that the sections defining the map vanish with order $3$ and $4$ at the point mapping to the singularity; these descend to sections of a complete linear system of bidegree $(4,0)$ on an $A_5$-singularity;
  \item or the quartic is of type $A_1-A_4$ and the rational tail is attached to a Weierstrass point of the core. The core is contracted to the non-rational singularity; moreover in this case the image determines a unique $\overline{C}$ with an $A_4$ singularity through which factorisation holds.
 \end{itemize}
 By Lemma~\ref{lem:unobstructdness}, in any of the cases where factorisation  occur $\operatorname{H}^1(\overline{f}^*\mathcal O_{\PP^2}(1))$ vanishes, hence the maps admitting factorisation are indeed in the image of the non obstructed locus.
   In any case, the dimension is
   \begin{align*}12=&(\dim E_6\;\text{ stratum})+\dim(\overline{\pazocal M}_{2,1})= 8+4\\
=& \dim(A_4A_1\; \text{ stratum})+\dim(\pazocal W_{2,1})=9+3\\
  \end{align*}
   where $\pazocal W_{2,1}$ denote the divisor in $\overline{ \pazocal M}_{2,1}$ where the marking is Weierstrass.    Hence the intersection is a divisor in \emph{main}. See Figure \ref{D4}.

 \item[$\pazocal D^{(3,1)}$ component] The image is the union of a nodal cubic with a line; the dimension is $15$. The smoothable locus has three components (see Figure \ref{D31}).
 

\begin{figure}[h]
   \begin{tikzpicture}[scale=.45]
 \coordinate (A) at (2,4);
 \coordinate (B) at (1.7,3.8);
 \coordinate (C) at   (2,3.6);
 \coordinate (D) at (2.3,3.4) ;
  \coordinate (E) at (2,3.2) ;
  \coordinate (F) at (1.7,3);
  \coordinate (G) at  (2,2.8);
  \coordinate (H1) at (2.3,4.3);
 \coordinate (H2) at (2.4,4.8);
 \coordinate (R1) at (1.9,4.3);
 \coordinate (R2) at (3.5,4.5);
  \coordinate (R1b) at (2.1,2.2);
 \coordinate (R2b) at (3.5,1.8);
 \coordinate (J1) at (2.3,2.4);
 \coordinate (J2) at (2.4,2);
 
 \coordinate (R1) at (1.6,4.1);
 \coordinate (R2) at (3.3,4.3);
  \coordinate (T1) at (1.6,3.2);
 \coordinate (T2) at (3.5,3);
  \coordinate (R1b) at (1.7,2.7);
 \coordinate (R2b) at (3.1,2.3);

   \coordinate(fact1) at (2.4,1);
   \coordinate(fact2) at (3.1,-.4);
 \coordinate (Fs) at (3,3.5);
  \coordinate (Ft) at (4.6,3.5);
   \coordinate (Flabel) at (3.8,3.5);
   \coordinate (Fbs) at (5.3,-.3);
  \coordinate (Fbt) at (6.4,1);
      \coordinate (Fblabel) at (5.9,.1);

 \coordinate (An) at (-.5,.5);
 \coordinate (B1m) at(2,-.2);
  \coordinate (B2m) at(-1.8,-.2);
 \coordinate (Cn) at  (1,.5);
 \coordinate (D1n) at (3,-.2)  ;
  \coordinate (D2n) at (-.5,-.2) ;
  \coordinate (En) at (2,.5);
  \coordinate (F1n) at  (4,-.2);
   \coordinate (F2n) at (1,-.2);
   \coordinate (Gn) at (3,.5);

 \begin{scope}[every coordinate/.style={shift={(0,0)}}]
\draw[draw=gray] ([c]A) .. controls ([c]B) ..
       ([c]C) .. controls ([c]D)  .. ([c]E) .. controls ([c]F).. ([c]G);
      \draw[draw=gray]  plot [smooth, tension=3] coordinates { ([c]A) ([c]H1) ([c]H2)};
       \draw[draw=gray]  plot [smooth, tension=3] coordinates { ([c]G) ([c]J1) ([c]J2)};
       \draw[thick] ([c]R1) to ([c]R2);
            \draw[thick] ([c]T1) to ([c]T2);
            \node at ([c]R1) [above] {\scriptsize{$T_1$}};
              \node at ([c]T1) [below] {\scriptsize{$T_2$}};
             \node at ([c]R2) [above] {\scriptsize{$3$}};
              \node at ([c]T2) [below] {\scriptsize{$1$}};
   \draw[->,thick] ([c]Fs) to ([c]Ft);
        \node at ([c]Flabel) [above] {\scriptsize{$f$}};
             \node at ([c]J2) [below] {\begin{color}{gray}\scriptsize{$Z,g=2$}\end{color}};
 \end{scope}
 
 \coordinate(P1) at  (-0.2,1);
 \coordinate (P2) at ( 3.5,1);
 \coordinate (P3) at (2.5,-1 );
 \coordinate (P4) at (-1.2,-1);

  \coordinate (G1) at (0,.5);
 \coordinate (G2) at ( 4,-.6);
  \coordinate (G3) at (-1.5,-.6);
 \coordinate (G4) at  (1.8,.5);
 \coordinate (G5) at (-.5,-.1) ;
  \coordinate (G6) at (2,-.1) ;
   \coordinate (G7) at (.7,-.1) ;

   \begin{scope}[every coordinate/.style={shift={(6,3)}}]
\draw[thick] ([c]G1) .. controls ([c]G2) and ([c]G3) ..
       ([c]G4);
\draw[thick] ([c]G4) to[out=30,in=60] (8.3,2.6);
       \draw[thick] ([c]G5)--(8.5,2.9);
   
 \draw[dashed] ([c]P1)-- ( [c]P2)--([c]P3)--([c]P4)--([c]P1);  
    
     \fill[gray] ([c]G7) circle (3pt);
          \node at ([c]G7) [below] {\begin{color}{gray}\scriptsize{$f(Z)$}\end{color}};

    \end{scope}

 \coordinate(Q1) at  (.5,5.5);
 \coordinate (Q2) at (10,5.5);
 \coordinate (Q3) at (10,1);
 \coordinate (Q4) at (.5,1);
 \coordinate (Qmiddle) at (5.25,1);
 \coordinate (Q3long) at (10,-2.5 );
 \coordinate (Q4long) at (.5,-2.5);
\coordinate (Qlongmiddle) at (5.5,-2.5);
  \begin{scope}[every coordinate/.style={shift={(0,0)}}]     
 \draw ([c]Q1)-- ( [c]Q2)--([c]Q3)--([c]Q4)--([c]Q1);   
  \node at ([c]Qmiddle) [below] {\scriptsize{$\pazocal D^{(3,1)}$}};

 \end{scope}

\begin{scope}[every coordinate/.style={shift={(14,0)}}]
\draw[thick] ([c]A) .. controls ([c]B) ..
       ([c]C) .. controls ([c]D)  .. ([c]E) .. controls ([c]F).. ([c]G);
      \draw[thick]  plot [smooth, tension=3] coordinates { ([c]A) ([c]H1) ([c]H2)};
       \draw[thick]  plot [smooth, tension=3] coordinates { ([c]G) ([c]J1) ([c]J2)};
       \draw[->,thick] ([c]Fs) to ([c]Ft);
        \node at ([c]Flabel) [above] {\scriptsize{$f$}};
       \end{scope}

  \begin{scope}[every coordinate/.style={shift={(14,0)}}]     
 \draw ([c]Q1)-- ( [c]Q2)--([c]Q3)--([c]Q4)--([c]Q1);   
 \end{scope}

 \coordinate (Am) at (0,0);
 \coordinate (Bm) at(0.2,-.3);
 \coordinate (Cm) at  ( .4,0);
 \coordinate (Dm) at (.6,.3)  ;
  \coordinate (Em) at (.8,0) ;
  \coordinate (Fm) at (1,-.3);
  \coordinate (Gm) at  (1.2,0);
  \coordinate (H1m) at (-.5,.3);
 \coordinate (H2m) at (-.2,.2);
\coordinate (J1m) at (1.4,.2);
 \coordinate (J2m) at (1.7,.3);
 \coordinate (N1m) at (3.2,-.3);
 \coordinate (N2m) at (.2,-.3);
  \coordinate (N3m) at (3,.3);
 \coordinate(P1) at  (-0.2,1);
 \coordinate (P2) at ( 3.5,1);
 \coordinate (P3) at (2.5,-1 );
 \coordinate (P4) at (-1.2,-1);
 
 \begin{scope}[every coordinate/.style={shift={(20,3)}}]
 \draw[thick]  plot [smooth, tension=3] coordinates {([c]H1m) ([c]H2m) ([c]Am)  };
\draw[thick] ([c]Am) .. controls ([c]Bm) ..
       ([c]Cm) .. controls ([c]Dm)  .. ([c]Em) .. controls ([c]Fm).. ([c]Gm);
    \draw[thick]  plot [smooth, tension=3] coordinates { ([c]Gm) ([c]J1m) ([c]J2m)  };   
     \draw[thick]   ([c]J2m).. controls ([c]N1m) and ([c]N2m).. ([c]N3m);
 \draw[dashed] ([c]P1)-- ( [c]P2)--([c]P3)--([c]P4)--([c]P1);  
    \end{scope}   
       
       
\draw (7.5,1)--(4,-1.5);
\draw (7.5,1)--(13,-1.5);
\draw (7.5,1)--(23,-1.5);

 \node at (12.3,3) [above] { $\cap$};


 
  \coordinate(S1) at (3,-.9);
    \coordinate(S2) at (5.3,-.9);
        \coordinate (Y1) at (4.15,-2);
   \coordinate (Y2) at (4.15,.1);
           \coordinate (O) at (4.15,-.9);
    \coordinate (O3) at (3.2,-1.8);
   \coordinate (O4) at (5,-1.8);

 \begin{scope}[every coordinate/.style={shift={(-2,-7)}}]
\draw[draw=gray] ([c]A) .. controls ([c]B) ..
       ([c]C) .. controls ([c]D)  .. ([c]E) .. controls ([c]F).. ([c]G);
      \draw[draw=gray]  plot [smooth, tension=3] coordinates { ([c]A) ([c]H1) ([c]H2)};
       \draw[draw=gray]  plot [smooth, tension=3] coordinates { ([c]G) ([c]J1) ([c]J2)};
       \draw[thick] ([c]R1) to ([c]R2);
            \draw[thick] ([c]T1) to ([c]T2);
                   \draw[dashed] ([c]R1b) to ([c]R2b);
                   \node at ([c]R1) [above] {\scriptsize{$T_1$}};
              \node at ([c]T1) [xshift=-.15cm,yshift=-.05cm]{\scriptsize{$T_2$}};
              \node at ([c]R1b) [xshift=-.15cm,yshift=-.15cm] {\scriptsize{$\overline T_1$}};
             \node at ([c]R2) [above] {\scriptsize{$3$}};
              \node at ([c]T2) [below] {\scriptsize{$1$}};
   \draw[->,thick] ([c]Fs) to ([c]Ft);
        \node at ([c]Flabel) [above] {\scriptsize{$f$}};
             \node at ([c]J2) [below] {\begin{color}{gray}\scriptsize{$Z$}\end{color}};
                          \draw[->,dotted] ([c]fact1) to ([c]fact2);
                          
                   \draw[dashed] ([c]S1) to ([c]S2);
                   \draw[thick] ([c]Y1) to ([c]Y2);
                    \draw[thick] ([c]O) parabola ([c]O3);
                        \draw[thick] ([c]O) parabola ([c]O4);
                        
                        \draw[->,thick] ([c]Fbs) to ([c]Fbt);         
 \node at ([c]Fblabel) [right] {\scriptsize{$\bar{f}$}};

 \end{scope}

  \begin{scope}[every coordinate/.style={shift={(-2,-7)}}]     
 \draw ([c]Q1)-- ( [c]Q2)--([c]Q3long)--([c]Q4long)--([c]Q1);   
  \node at ([c]Qlongmiddle) [below] {\scriptsize{$\pazocal D^{(3,1)}\cap\overline{\pazocal M}^{\text{main}}$, general core}};

 \end{scope}

   \coordinate (E3) at (1.8,-.8);
 \coordinate (E4) at  (1.8,.8);
 \coordinate (E5) at (-.2,.2) ;
  \coordinate (E6) at (2,.2) ;
   \coordinate (E7) at (.7,.2) ;

  \begin{scope}[every coordinate/.style={shift={(4,-4)}}]
\draw[thick] ([c]E7) parabola  ([c]E3) ;
 \draw[thick] ([c]E7) parabola  ([c]E4) ;

       \draw[thick] ([c]E5)--([c]E6);
   
 \draw[dashed] ([c]P1)-- ( [c]P2)--([c]P3)--([c]P4)--([c]P1);  
    
     \fill[gray] ([c]E7) circle (3pt);
          \node at ([c]E7) [below] {\begin{color}{gray}\scriptsize{$f(Z)$}\end{color}};
          \node at ([c]E7) [above] {\scriptsize{$E_7$}};
\end{scope}
  

        \coordinate (y1) at (3.5,-2);
   \coordinate (y2) at (3.5,.1);
           \coordinate (o) at (3.5,-.9);
    \coordinate (o3) at (4.2,-1.8);
   \coordinate (o4) at (4.2,.3);

 \begin{scope}[every coordinate/.style={shift={(8,-7)}}]
\draw[draw=gray] ([c]A) .. controls ([c]B) ..
       ([c]C) .. controls ([c]D)  .. ([c]E) .. controls ([c]F).. ([c]G);
      \draw[draw=gray]  plot [smooth, tension=3] coordinates { ([c]A) ([c]H1) ([c]H2)};
       \draw[draw=gray]  plot [smooth, tension=3] coordinates { ([c]G) ([c]J1) ([c]J2)};
       \draw[thick] ([c]R1) to ([c]R2);
            \draw[thick] ([c]T1) to ([c]T2);
            
                  \node at ([c]R1) [xshift=0cm,yshift=.2cm] {\scriptsize{$T_1=\overline T_1$}};
                  \node at ([c]T1) [below] {\scriptsize{$T_2$}};
             \node at ([c]R2) [above] {\scriptsize{$3$}};
              \node at ([c]T2) [below] {\scriptsize{$1$}};
   \draw[->,thick] ([c]Fs) to ([c]Ft);
        \node at ([c]Flabel) [above] {\scriptsize{$f$}};
             \node at ([c]J2) [below] {\begin{color}{gray}\scriptsize{$Z$}\end{color}};
                          \draw[->,dotted] ([c]fact1) to ([c]fact2);
       
                   \draw[thick] ([c]y1) to ([c]y2);
                    \draw[thick] ([c]o) parabola ([c]o3);
                        \draw[thick] ([c]o) parabola ([c]o4);
                        
                        \draw[->,thick] ([c]Fbs) to ([c]Fbt);         
 \node at ([c]Fblabel) [right] {\scriptsize{$\bar{f}$}};

 \end{scope}

  \begin{scope}[every coordinate/.style={shift={(8,-7)}}]     
 \draw ([c]Q1)-- ( [c]Q2)--([c]Q3long)--([c]Q4long)--([c]Q1);   
  \node at ([c]Qlongmiddle) [below] {\scriptsize{$\pazocal D^{(3,1)}\cap\overline{\pazocal M}^{\text{main}}$, Weierstrass tail}};
 \end{scope}

 \coordinate (e5) at  (1,.8) ;
   \coordinate (e6) at (.3,-.6);
   \coordinate (e3) at (1.8,-.3);
     \coordinate (e2) at (1.8,-.38);
     \coordinate (e4) at (1.2,-.6);
    \coordinate (e7) at (.1,-.3);

  \begin{scope}[every coordinate/.style={shift={(14,-4)}}]
    
 \draw[dashed] ([c]P1)-- ( [c]P2)--([c]P3)--([c]P4)--([c]P1);  

      \end{scope}

\begin{scope}[every coordinate/.style={shift={(14.3,-4)}}]

\draw[thick] ([c]E7) parabola  ([c]e3) ;
 \draw[thick] ([c]E7) parabola  ([c]E4) ;
\draw[thick]   ([c]e3) --  ([c]e2) .. controls ([c]e4) .. ([c]e7) ;

       \draw[thick] ([c]e5)--([c]e6);

    \fill[gray] ([c]E7) circle (3pt);
          \node at ([c]E7) [left] {\begin{color}{gray}\scriptsize{$f(Z)$}\end{color}};
\end{scope}


 
  \coordinate(S1) at (3,-.9);
    \coordinate(S2) at (5.3,-.9);
         \coordinate (O) at (4.15,-.9);
    \coordinate (O3) at (3.2,-1.8);
   \coordinate (bO4) at (5.3,.3);

 \begin{scope}[every coordinate/.style={shift={(18,-7)}}]
\draw[draw=gray] ([c]A) .. controls ([c]B) ..
       ([c]C) .. controls ([c]D)  .. ([c]E) .. controls ([c]F).. ([c]G);
      \draw[draw=gray]  plot [smooth, tension=3] coordinates { ([c]A) ([c]H1) ([c]H2)};
       \draw[draw=gray]  plot [smooth, tension=3] coordinates { ([c]G) ([c]J1) ([c]J2)};

       \draw[thick] ([c]R1) to ([c]R2);
          \draw[thick] ([c]R1b) to ([c]R2b);
                   
             \node at ([c]R2) [above] {\scriptsize{$3$}};
              \node at ([c]T2) [below] {\scriptsize{$1$}};
              
               \node at ([c]R1) [above] {\scriptsize{$T_1$}};
              \node at ([c]R1b) [above=-.05cm] {\scriptsize{$T_2=\overline{T}_1$}};
              
   \draw[->,thick] ([c]Fs) to ([c]Ft);
        \node at ([c]Flabel) [above] {\scriptsize{$f$}};
             \node at ([c]J2) [below] {\begin{color}{gray}\scriptsize{$Z$}\end{color}};
                          \draw[->,dotted] ([c]fact1) to ([c]fact2);
                          
                   \draw[thick] ([c]S1) to ([c]S2);
                   
                    \draw[thick] ([c]O) parabola ([c]O3);
                        \draw[thick] ([c]O) parabola ([c]bO4);
                        
                        \draw[->,thick] ([c]Fbs) to ([c]Fbt);         
 \node at ([c]Fblabel) [right] {\scriptsize{$\bar{f}$}};

 \end{scope}

  \begin{scope}[every coordinate/.style={shift={(18,-7)}}]     
 \draw ([c]Q1)-- ( [c]Q2)--([c]Q3long)--([c]Q4long)--([c]Q1);   
  \node at ([c]Qlongmiddle) [below] {\scriptsize{$\pazocal D^{(3,1)}\cap\overline{\pazocal M}^{\text{main}}$, conjugate tails}};
 \end{scope}

   \coordinate (F3) at (-.2,-.8);

 \coordinate (F5) at (-.2,-.1) ;
  \coordinate (F6) at (2,-.1) ;
   \coordinate (F7) at (.7,-.1) ;
  
  \coordinate (F4) at  (4,.8);
   \coordinate (F2) at  (.5,.8);
    \coordinate (F1) at  (2.5,.3);

  \begin{scope}[every coordinate/.style={shift={(24,-4)}}]
  
\draw[thick] ([c]F7) parabola  ([c]F3) ;

 \draw[thick] ([c]F7) .. controls  ([c]F4) and ([c]F2) .. ([c]F1) ;

       \draw[thick] ([c]F5)--([c]F6);
   
 \draw[dashed] ([c]P1)-- ( [c]P2)--([c]P3)--([c]P4)--([c]P1);

     \fill[gray] ([c]F7) circle (3pt);
          \node at ([c]F7) [below] {\begin{color}{gray}\scriptsize{$f(Z)$}\end{color}};
\end{scope}

  \begin{scope}[every coordinate/.style={shift={(18,-7)}}]
\draw[draw=gray] ([c]A) .. controls ([c]B) ..
       ([c]C) .. controls ([c]D)  .. ([c]E) .. controls ([c]F).. ([c]G);
      \draw[draw=gray]  plot [smooth, tension=3] coordinates { ([c]A) ([c]H1) ([c]H2)};
       \draw[draw=gray]  plot [smooth, tension=3] coordinates { ([c]G) ([c]J1) ([c]J2)};

       \draw[thick] ([c]R1) to ([c]R2);
          \draw[thick] ([c]R1b) to ([c]R2b);
                   
             \node at ([c]R2) [above] {\scriptsize{$3$}};
              \node at ([c]T2) [below] {\scriptsize{$1$}};
              
               \node at ([c]R1) [above] {\scriptsize{$T_1$}};
              \node at ([c]R1b) [above=-.05cm] {\scriptsize{$T_2=\overline{T}_1$}};
              
   \draw[->,thick] ([c]Fs) to ([c]Ft);
        \node at ([c]Flabel) [above] {\scriptsize{$f$}};
             \node at ([c]J2) [below] {\begin{color}{gray}\scriptsize{$Z$}\end{color}};
                          \draw[->,dotted] ([c]fact1) to ([c]fact2);
                          
                   \draw[thick] ([c]S1) to ([c]S2);
                   
                    \draw[thick] ([c]O) parabola ([c]O3);
                        \draw[thick] ([c]O) parabola ([c]bO4);
                        
                        \draw[->,thick] ([c]Fbs) to ([c]Fbt);         
 \node at ([c]Fblabel) [right] {\scriptsize{$\bar{f}$}};

 \end{scope}

\end{tikzpicture}

\caption{$\pazocal D^{(3,1)},\dim=15$; $\pazocal D^{(3,1)}\cap\overline{\pazocal M}^{\text{main}},\dim=12$}
\label{D31}
\end{figure}

Again, let us start by examining which singularities can appear. 
 \begin{itemize}[leftmargin=.5cm]
 \item[$\square$]  the tails $T_1,T_2$, on which the map has degree $3$ and $1$ respectively, are attached to generic points of the core, and there is a unique lift to an admissible cover $\psi_1\colon C_1'\to R_1$ of this combinatorial type obtained by sprouting  the conjugate points of $T_i\cap Z$.  Enhancing the
 dual graph to a tropical admissible cover and letting the edge lengths $l_1,l_2$ vary  in  the cone $\sigma_1\cong\mathbb R^2_{\geq 0}$, we find five possibilities\footnote{The \cite{BC} subdivision of $\sigma_1$ has seven cones, but for three of them what changes is only the aligning function $\lambda$, not the singularity obtained via contraction and push-out.} for the analytic syingularities of $\overline{C}$. For the benefit of the reader, we add a picture of all the singularities in  Figure~\ref{fig:singularities}.
 
 \begin{figure}[h]
\begin{tikzpicture} [scale=.4]
\tikzstyle{every node}=[font=\normalsize]
		\tikzset{arrow/.style={latex-latex}}
		
	\tikzset{cross/.style={cross out, draw, thick,
         minimum size=2*(#1-\pgflinewidth), 
         inner sep=1.2pt, outer sep=1.2pt}}

	     \coordinate (y1) at (6,0,0);
	   \coordinate (y2) at (6,-3,0);
	    \coordinate (x1) at (4.5,-1.5,0);
	   \coordinate (x2) at (7.5,-1.5,0);
	   \coordinate (oo) at (6,-1.5,0);
	    \coordinate (m1) at (7,-.5,0);
	   \coordinate (m2) at (5,-.5,0);
	    \coordinate (b1) at (5.7,-.4,0);
	    \coordinate (b2) at (7,.2,0);
	    
	       \coordinate (y1e) at (6.2,0,0);
	   \coordinate (y2e) at (6.2,-3,0);
	   \coordinate (b1e) at (5.7,-2.7,0);
	    \coordinate (b2e) at (7,-2.2,0);
	       \coordinate (m3) at (7,-2.5,0);
		  
  \begin{scope}[every coordinate/.style={shift={(-2,0,0)}}]

           \draw[gray] ([c]y1)-- ([c]y2);
            \draw[gray] ([c]x1)-- ([c]x2);
              \draw[thick] ([c]b1)-- ([c]b2);
              \draw[thick] ([c]oo) parabola ([c]m1);
               \draw[thick] ([c]oo) parabola ([c]m2);
            
            \node at ([c]m1) [right] {{\scriptsize{$T_1$}}};
             \node at ([c]x2) [right] {{\scriptsize{$\overline{T_1}$}}};
              \node at ([c]b2) [right] {{\scriptsize{$T_2$}}};

            \end{scope}

		  \begin{scope}[every coordinate/.style={shift={(4,0,0)}}]

           \draw[ thick] ([c]y1)-- ([c]y2);
            \draw[gray] ([c]x1)-- ([c]x2);
            
              \draw [thick]([c]oo) parabola ([c]m1);
               \draw[thick] ([c]oo) parabola ([c]m2); 
            
            \node at ([c]m1) [right] {{\scriptsize{$T_1$}}};
             \node at ([c]x2) [right] {{\scriptsize{$\overline{T_1}$}}};
              \node at ([c]y2) [right] {{\scriptsize{$T_2$}}};

            \end{scope}

		\begin{scope}[every coordinate/.style={shift={(10,0,0)}}]

           \draw[gray] ([c]y1)-- ([c]y1e)--([c]y2e)--([c]y2)--([c]y1);
            \draw[thick] ([c]b1)-- ([c]b2);
            \draw[thick] ([c]b1e)-- ([c]b2e);

        
             \node at ([c]b2) [right] {{\scriptsize{$T_1$}}};
              \node at ([c]b2e) [right] {{\scriptsize{$T_2$}}};

            \end{scope}

	  \begin{scope}[every coordinate/.style={shift={(16,0,0)}}]

           \draw[ thick] ([c]y1)-- ([c]y2);
            \draw[gray] ([c]x1)-- ([c]x2);
            
              \draw [thick]([c]oo) parabola ([c]m1);
               \draw[thick] ([c]oo) parabola ([c]m2); 
            
            \node at ([c]m1) [right] {{\scriptsize{$T_2$}}};
             \node at ([c]x2) [right] {{\scriptsize{$\overline{T_2}$}}};
              \node at ([c]y2) [right] {{\scriptsize{$T_1$}}};

            \end{scope}

		  \begin{scope}[every coordinate/.style={shift={(22,0,0)}}]
		               
           \draw[gray] ([c]y1)-- ([c]y2);
            \draw[gray] ([c]x1)-- ([c]x2);
              \draw[thick] ([c]b1)-- ([c]b2);
              \draw[thick] ([c]oo) parabola ([c]m1);
               \draw[thick] ([c]oo) parabola ([c]m2);
            
            \node at ([c]m1) [right] {{\scriptsize{$T_2$}}};
             \node at ([c]x2) [right] {{\scriptsize{$\overline{T_2}$}}};
              \node at ([c]b2) [right] {{\scriptsize{$T_1$}}};

          \end{scope}

	\end{tikzpicture}
	\caption{Singularities type that can appear for the lift $C_1\xrightarrow{\psi_1} T_1$}
 \label{fig:singularities}
\end{figure}
Notice that factorisation through the  last three singularities on the right  does not occur. Sections of a line bundle on $T_i$ descend to a contracted ribbon with two tails if  and only if they ramify (i.e. vanish with order at least two) at the nodes $T_i\cap Z$, and this cannot happen since we only have linear sections of $T_2$. In the two right-most cases the line bundle on the genus one subcurve given by the special branches would have degree one, so there is no morphism of the prescribed degree from these singularities to $\mathbb P^2.$ 
 
 \item[$\square$] $T_1$ is attached to a Weierstrass point, and a lift to an admissible cover $C'_2\xrightarrow{\psi_2} R_2$ is determined by the position of the other ramification point on $T_1$.  Enhancing the
 dual graph to a tropical admissible cover and letting the edge lengths $l_1,l_2$ vary  in $\sigma_2$, we find five possibilities for the singularities of $\overline{C}$, three of them  have already appeared for the previous choice of lift to an admissible cover (the last three on the right of Figure~\ref{fig:singularities}) and we  have discussed that factorisation through them does not occur. There are two new possibilities for $\overline{C}$: either it has a $D_5$ singularity and $T_1$ is the special (cuspidal) branch; or it has a dangling $D_5$ singularity and $T_2$ is nodally attached to the weight $0$ line of the singularity;
  \item[$\square$] Lifting $C$ to an admissible cover with $T_2$ Weierstrass is not interesting, as the new analytic singularity we  find has a cuspidal special branch of degree one, and thus  factorisation cannot occur.  
   \item[$\square$] The $T_i$ are attached to conjugate points, and a  lift to an admissible cover $C'_3\xrightarrow{\psi_3} R_3$ is defined  choosing an identification of $T_1$ and $T_2$ with their image under the two-to-one cover. In this case, once we have chosen the admissible cover, there is no alignment,  the log structure is the minimal one,  and the contraction gives an $A_5$ singularity.
  \end{itemize}
 Since we can assume that $f$ has generic weighted dual graph, its restriction to the non contracted components is birational onto its image in $\mathbb P^2$; as in the previous case, if the map factors then $\overline{C}$ is a partial normalization of $f(C)$. Looking at the classification of singularities of quartics which decompose as a rational cubic and a line we see that the only possibilities are:
 
 \begin{itemize}[leftmargin=.5cm]
  \item either the core is general, and the image has an $E_7$-singularity (local equation $y(y^2-x^3)$, i.e. the union of a cuspidal cubic with its reduced normal cone; it is the image of a complete linear system of degree $(3,1,0)$ on a type $I\!I_3$ singularity, where the special branches are the first and third ones).
  \item Or the attaching point of the degree $3$ tail is Weierstrass, and the image has a $D_5$-singularity (a cuspidal cubic with a general line through the cusp). 
  \item Or the two attaching points are conjugate, and the image is of type $A_1-A_5$ (a nodal cubic with a flex line). 
   \end{itemize}
 In any case, the image determines uniquely the singularity $\overline{C}$ through which we have factorisation.
 
    Notice that  factorisation through a sprouted $D_5$ singularity or a sprouted $I\!I_3$ singularity is also possible. In both  cases there is only a non constant section on the degree $3$ special branch which vanishes of order $3$ at the singular point, hence the map covers $3:1$ a line; these loci correspond to the degeneration of the cubic to a triple line and are contained in the closure of the components of the intersection listed above.
 
Again, by  Lemma~\ref{lem:unobstructdness}, in any of the cases where factorisation  occur $\operatorname{H}^1(\overline{f}^*\mathcal O_{\PP^2}(1))$ vanishes, hence the maps admitting factorisation are indeed in the image of the non obstructed locus.

  In all three cases the dimension is 
  \begin{align*}12= & (\dim E_7\;\text{ stratum})+5=\dim(\overline{\pazocal M}_{2,2})=7+5\\
  =&(\dim D_5A_1\;\text{ stratum})+\dim(\overline{\pazocal W}_{2,2})=8+4\\
  =& (\dim A_5A_1\;\text{ stratum})+\dim(\overline{\pazocal K}_{2,2})=8+4
  \end{align*}
  where $\overline{\pazocal W}_{2,2},\; \overline{\pazocal K}_{2,2}$ are respectively the divisors in $\overline{\pazocal M}_{2,2}$ where one of the markings is Weierstrass, and where the two points are conjugate.  This a divisor in $\emph{main}$.

 \item[$\pazocal D^{(2^2)}$ component] The image is the union of two conics; the dimension is $15$. The intersection with \emph{main} has two components ( See Figure \ref{D22}.)

 \begin{figure}[h]
   \begin{tikzpicture}[scale=.4]
 \coordinate (A) at (2,4);
 \coordinate (B) at (1.7,3.8);
 \coordinate (C) at   (2,3.6);
 \coordinate (D) at (2.3,3.4) ;
  \coordinate (E) at (2,3.2) ;
  \coordinate (F) at (1.7,3);
  \coordinate (G) at  (2,2.8);
  \coordinate (H1) at (2.3,4.3);
 \coordinate (H2) at (2.4,4.8);
 \coordinate (R1) at (1.9,4.3);
 \coordinate (R2) at (3.5,4.5);
  \coordinate (R1b) at (2.1,2.2);
 \coordinate (R2b) at (3.5,1.8);
 \coordinate (J1) at (2.3,2.4);
 \coordinate (J2) at (2.4,2);
 
 \coordinate (R1) at (1.6,4.1);
 \coordinate (R2) at (3.3,4.3);
  \coordinate (T1) at (1.6,3.2);
 \coordinate (T2) at (3.5,3);
  \coordinate (R1b) at (1.7,2.7);
 \coordinate (R2b) at (3.1,2.3);

   \coordinate(fact1) at (2.4,1);
   \coordinate(fact2) at (3.1,-.4);
 \coordinate (Fs) at (3,3.5);
  \coordinate (Ft) at (4.6,3.5);
   \coordinate (Flabel) at (3.8,3.5);
   \coordinate (Fbs) at (5.3,-.3);
  \coordinate (Fbt) at (6.4,1);
      \coordinate (Fblabel) at (5.9,.1);

 \coordinate (An) at (-.5,.5);
 \coordinate (B1m) at(2,-.2);
  \coordinate (B2m) at(-1.8,-.2);
 \coordinate (Cn) at  (1,.5);
 \coordinate (D1n) at (3,-.2)  ;
  \coordinate (D2n) at (-.5,-.2) ;
  \coordinate (En) at (2,.5);
  \coordinate (F1n) at  (4,-.2);
   \coordinate (F2n) at (1,-.2);
   \coordinate (Gn) at (3,.5);

 \begin{scope}[every coordinate/.style={shift={(0,0)}}]
\draw[draw=gray] ([c]A) .. controls ([c]B) ..
       ([c]C) .. controls ([c]D)  .. ([c]E) .. controls ([c]F).. ([c]G);
      \draw[draw=gray]  plot [smooth, tension=3] coordinates { ([c]A) ([c]H1) ([c]H2)};
       \draw[draw=gray]  plot [smooth, tension=3] coordinates { ([c]G) ([c]J1) ([c]J2)};
       \draw[thick] ([c]R1) to ([c]R2);
            \draw[thick] ([c]T1) to ([c]T2);
            \node at ([c]R1) [above] {\scriptsize{$T_1$}};
              \node at ([c]T1) [below] {\scriptsize{$T_2$}};
             \node at ([c]R2) [above] {\scriptsize{$2$}};
              \node at ([c]T2) [below] {\scriptsize{$2$}};
   \draw[->,thick] ([c]Fs) to ([c]Ft);
        \node at ([c]Flabel) [above] {\scriptsize{$f$}};
             \node at ([c]J2) [below] {\begin{color}{gray}\scriptsize{$Z,g=2$}\end{color}};
 \end{scope}
 
 \coordinate(P1) at  (-0.2,1);
 \coordinate (P2) at ( 3.5,1);
 \coordinate (P3) at (2.5,-1 );
 \coordinate (P4) at (-1.2,-1);

  
  \coordinate (G1) at (1,0);
 \coordinate (G2) at (1.4,.45);
  
   \begin{scope}[every coordinate/.style={shift={(6,3)}}]
\draw[thick] ([c]G1)  ellipse (1.3cm and 0.5cm);
      \draw[thick] ([c]G1)  ellipse (.5cm and .9cm);

 \draw[dashed] ([c]P1)-- ( [c]P2)--([c]P3)--([c]P4)--([c]P1);  
    
     \fill[gray] ([c]G2) circle (3pt);
          \node at ([c]G2) [right] {\begin{color}{gray}\scriptsize{$f(Z)$}\end{color}};

    \end{scope}

  \coordinate(Q1) at  (.5,5.5);
 \coordinate (Q2) at (10,5.5);
 \coordinate (Q3) at (10,1);
 \coordinate (Q4) at (.5,1);
 \coordinate (Qmiddle) at (5.25,1);
 \coordinate (Q3long) at (10,-2.5 );
 \coordinate (Q4long) at (.5,-2.5);
\coordinate (Qlongmiddle) at (5.5,-2.5);
  \begin{scope}[every coordinate/.style={shift={(0,0)}}]     
 \draw ([c]Q1)-- ( [c]Q2)--([c]Q3)--([c]Q4)--([c]Q1);   
  \node at ([c]Qmiddle) [below] {\scriptsize{$\pazocal D^{(2^2)}$}};

 \end{scope}

\begin{scope}[every coordinate/.style={shift={(14,0)}}]
\draw[thick] ([c]A) .. controls ([c]B) ..
       ([c]C) .. controls ([c]D)  .. ([c]E) .. controls ([c]F).. ([c]G);
      \draw[thick]  plot [smooth, tension=3] coordinates { ([c]A) ([c]H1) ([c]H2)};
       \draw[thick]  plot [smooth, tension=3] coordinates { ([c]G) ([c]J1) ([c]J2)};
       \draw[->,thick] ([c]Fs) to ([c]Ft);
        \node at ([c]Flabel) [above] {\scriptsize{$f$}};
       \end{scope}
  \coordinate(Q1) at  (.5,5.5);
 \coordinate (Q2) at (10,5.5);
 \coordinate (Q3) at (10,1);
 \coordinate (Q4) at (.5,1);
 \coordinate (Qmiddle) at (5.25,1);
 \coordinate (Q3long) at (10,-2.5 );
 \coordinate (Q4long) at (.5,-2.5);
\coordinate (Qlongmiddle) at (5.5,-2.5);

  \begin{scope}[every coordinate/.style={shift={(14,0)}}]     
 \draw ([c]Q1)-- ( [c]Q2)--([c]Q3)--([c]Q4)--([c]Q1);   
 \end{scope}

 \coordinate (Am) at (0,0);
 \coordinate (Bm) at(0.2,-.3);
 \coordinate (Cm) at  ( .4,0);
 \coordinate (Dm) at (.6,.3)  ;
  \coordinate (Em) at (.8,0) ;
  \coordinate (Fm) at (1,-.3);
  \coordinate (Gm) at  (1.2,0);
  \coordinate (H1m) at (-.5,.3);
 \coordinate (H2m) at (-.2,.2);
\coordinate (J1m) at (1.4,.2);
 \coordinate (J2m) at (1.7,.3);
 \coordinate (N1m) at (3.2,-.3);
 \coordinate (N2m) at (.2,-.3);
  \coordinate (N3m) at (3,.3);
 \coordinate(P1) at  (-0.2,1);
 \coordinate (P2) at ( 3.5,1);
 \coordinate (P3) at (2.5,-1 );
 \coordinate (P4) at (-1.2,-1);
 
 \begin{scope}[every coordinate/.style={shift={(20,3)}}]
 \draw[thick]  plot [smooth, tension=3] coordinates {([c]H1m) ([c]H2m) ([c]Am)  };
\draw[thick] ([c]Am) .. controls ([c]Bm) ..
       ([c]Cm) .. controls ([c]Dm)  .. ([c]Em) .. controls ([c]Fm).. ([c]Gm);
    \draw[thick]  plot [smooth, tension=3] coordinates { ([c]Gm) ([c]J1m) ([c]J2m)  };   
     \draw[thick]   ([c]J2m).. controls ([c]N1m) and ([c]N2m).. ([c]N3m);
 \draw[dashed] ([c]P1)-- ( [c]P2)--([c]P3)--([c]P4)--([c]P1);  
    \end{scope}   
       
       
\draw (7.5,1)--(8.5,-.5);
\draw (7.5,1)--(19,-.5);

 \node at (12.3,3) [above] { $\cap$};


 
  \coordinate(S1) at (3,-.9);
    \coordinate(S2) at (5.3,-.9);
         \coordinate (O) at (4.15,-.9);
    \coordinate (O3) at (3.2,-1.8);
   \coordinate (bO4) at (5.3,.3);

 \begin{scope}[every coordinate/.style={shift={(3,-6)}}]
\draw[draw=gray] ([c]A) .. controls ([c]B) ..
       ([c]C) .. controls ([c]D)  .. ([c]E) .. controls ([c]F).. ([c]G);
      \draw[draw=gray]  plot [smooth, tension=3] coordinates { ([c]A) ([c]H1) ([c]H2)};
       \draw[draw=gray]  plot [smooth, tension=3] coordinates { ([c]G) ([c]J1) ([c]J2)};

       \draw[thick] ([c]R1) to ([c]R2);
          \draw[thick] ([c]R1b) to ([c]R2b);
                   
             \node at ([c]R2) [above] {\scriptsize{$2$}};
              \node at ([c]T2) [below] {\scriptsize{$2$}};
              
               \node at ([c]R1) [above] {\scriptsize{$T_1$}};
              \node at ([c]R1b) [yshift=.2cm] {\scriptsize{$T_2=\overline{T}_1$}};
              
   \draw[->,thick] ([c]Fs) to ([c]Ft);
        \node at ([c]Flabel) [above] {\scriptsize{$f$}};
             \node at ([c]J2) [below] {\begin{color}{gray}\scriptsize{$Z$}\end{color}};
                          \draw[->,dotted] ([c]fact1) to ([c]fact2);
                          
                   \draw[thick] ([c]S1) to ([c]S2);
                   
                    \draw[thick] ([c]O) parabola ([c]O3);
                        \draw[thick] ([c]O) parabola ([c]bO4);
                        
                        \draw[->,thick] ([c]Fbs) to ([c]Fbt);         
 \node at ([c]Fblabel) [right] {\scriptsize{$\bar{f}$}};

 \end{scope}

  \begin{scope}[every coordinate/.style={shift={(3,-6)}}]     
 \draw ([c]Q1)-- ( [c]Q2)--([c]Q3long)--([c]Q4long)--([c]Q1);   
  \node at ([c]Qlongmiddle) [below] {\scriptsize{$\pazocal D^{(2^2)}\cap\overline{\pazocal M}^{\text{main}}$, conjugate tails}};
 \end{scope}

   
    \coordinate (F3) at  (-.2,0);
       \coordinate (F4) at  (0.05,0);
   \coordinate (F2) at  (.75,.15);
    \coordinate (F1) at (1.3,0);

  \begin{scope}[every coordinate/.style={shift={(9,-3)}}]
  \draw[thick] ([c]F1)  ellipse (1.3cm and 0.5cm);
  \draw[thick] ([c]F2)  ellipse (.8cm and 0.5cm);
 \draw[dashed] ([c]P1)-- ( [c]P2)--([c]P3)--([c]P4)--([c]P1);

               \fill[gray] ([c]F4) circle (3pt);
          \node at ([c]F3) [below] {\begin{color}{gray}\scriptsize{$f(Z)$}\end{color}};
          \node at ([c]F3) [xshift=-.2cm] {\scriptsize{$A_5$}};
          \node at ([c]F3) [xshift=.7cm,yshift=.35cm] {\scriptsize{$A_1$}};
\end{scope}


 
  \coordinate(S1) at (3,-.9);
    \coordinate(S2) at (5.3,-.9);
        \coordinate (Y1) at (4.15,-2);
   \coordinate (Y2) at (4.15,.1);
           \coordinate (O) at (4.15,-.9);
    \coordinate (O3) at (3.2,-1.8);
   \coordinate (O4) at (5,-1.8);

 \begin{scope}[every coordinate/.style={shift={(14,-6)}}]
\draw[draw=gray] ([c]A) .. controls ([c]B) ..
       ([c]C) .. controls ([c]D)  .. ([c]E) .. controls ([c]F).. ([c]G);
      \draw[draw=gray]  plot [smooth, tension=3] coordinates { ([c]A) ([c]H1) ([c]H2)};
       \draw[draw=gray]  plot [smooth, tension=3] coordinates { ([c]G) ([c]J1) ([c]J2)};
       \draw[thick] ([c]R1) to ([c]R2);
            \draw[thick] ([c]T1) to ([c]T2);
                   \draw[dashed] ([c]R1b) to ([c]R2b);
             \node at ([c]R2) [above] {\scriptsize{$2$}};
              \node at ([c]T2) [below] {\scriptsize{$2$}};
              \node at ([c]R1) [above] {\scriptsize{$T_1$}};
              \node at ([c]T1) [xshift=-.2cm,yshift=.1cm] {\scriptsize{$T_2$}};
              \node at ([c]R1b) [xshift=-.2cm,yshift=-.1cm] {\scriptsize{$\overline T_1$}};
   \draw[->,thick] ([c]Fs) to ([c]Ft);
        \node at ([c]Flabel) [above] {\scriptsize{$f$}};
             \node at ([c]J2) [below] {\begin{color}{gray}\scriptsize{$Z$}\end{color}};
                          \draw[->,dotted] ([c]fact1) to ([c]fact2);
                          
                   \draw[dashed] ([c]S1) to ([c]S2);
                   \draw[thick] ([c]Y1) to ([c]Y2);
                    \draw[thick] ([c]O) parabola ([c]O3);
                        \draw[thick] ([c]O) parabola ([c]O4);
                        \node at ([c]O4) [right] {\scriptsize{$T_1$}};
                        \draw[->,thick] ([c]Fbs) to ([c]Fbt);         
 \node at ([c]Fblabel) [right] {\scriptsize{$\bar{f}$}};

 \end{scope}

 \begin{scope}[every coordinate/.style={shift={(14,-6)}}]     
 \draw ([c]Q1)-- ( [c]Q2)--([c]Q3long)--([c]Q4long)--([c]Q1);   
  \node at ([c]Qlongmiddle) [below] {\scriptsize{$\pazocal D^{(2^2)}\cap\overline{\pazocal M}^{\text{main}}$, general core}};
 \end{scope}

 \coordinate (E4) at  (.7,.2);
 \coordinate (E5) at (-.3,-.2) ;
  \coordinate (E6) at (2,-.2) ;
   \coordinate (E8) at (1.7,-.2) ;
   \coordinate (E7) at (.7,-.2) ;

  \begin{scope}[every coordinate/.style={shift={(20,-3)}}]

 \draw[thick] ([c]E4)  ellipse (1cm and 0.4cm);
       \draw[thick] ([c]E5)--([c]E6);
   
 \draw[dashed] ([c]P1)-- ( [c]P2)--([c]P3)--([c]P4)--([c]P1);  
    
     \fill[gray] ([c]E7) circle (3pt);
      \fill[blue] ([c]E8) circle (3pt);
          \node at ([c]E7) [below] {\begin{color}{gray}\scriptsize{$f(Z)$}\end{color}};
          \node at ([c]E7) [yshift=.15cm] {\scriptsize{$A_3$}};
          \node at ([c]E6) [xshift=.18cm] {\scriptsize{$f(T_1)$}};
\end{scope}

  \end{tikzpicture}
\caption{$\pazocal D^{(2^2)},\dim=15$; $\pazocal D^{(2^2)}\cap\overline{\pazocal M}^{\text{main}},\dim=12$}
\label{D22}	
\end{figure}

 \begin{itemize}[leftmargin=.5cm]
  \item  the two attaching points are conjugate and the image is of type $A_1-A_5$ (the conics intersect at two points, of multiplicity one and three respectively); 
  notice that the map determines the $A_5$ singularity to factor through;
  \item the core is general and the image consists of a line (covered two-to-one) with a conic tangent to it at one of the branch points of the double cover (the image of a complete linear system of degree $(2,2,0)$ on a type $I\!I_3$ singularity); the lift to the admissible covers is unique since $\overline T_1$ has only one special point, and the choice of ramification determines the $I\!I_3$ singularity to factor through.
 \end{itemize}
 In any case, the dimension is $8+4=7+5=12$.

 Notice that in this case it was not necessary to consider the lift to an admissible cover with one of the two tails being attached to a Weierstrass point. Indeed, in that case we would  have that the map factors through a $D_5$ singularity if it restricts to a two-to-one cover ramified at the node on the Weierstrass tail (say it is $T_1$) and $f(T_2)$ is a conic tangent to $f(T_1)$ at $f(Z)$, i.e. the  factorisation conditions are like in the $I\!I_3$ singularity case. This locus is in the closure of the second component of the intersection, corresponding to degeneration of the genus two source curve into the divisor $\pazocal W_{2,2}.$
 

 \item[$\pazocal D^{(2,1^2)}$ component] The image consists of a conic and two lines, all concurrent in a point (where the core is contracted). The dimension is $14$. The smoothable locus consists of three components (See Figure \ref{D211}.
)

\begin{figure}[hbt]
   \begin{tikzpicture}[scale=.5]
 \coordinate (A) at (2,4);
 \coordinate (B) at (1.7,3.8);
 \coordinate (C) at   (2,3.6);
 \coordinate (D) at (2.3,3.4) ;
  \coordinate (E) at (2,3.2) ;
  \coordinate (F) at (1.7,3);
  \coordinate (G) at  (2,2.8);
  \coordinate (H1) at (2.3,4.3);
 \coordinate (H2) at (2.4,4.8);
 \coordinate (R1) at (1.9,4.3);
 \coordinate (R2) at (3.5,4.5);
  \coordinate (R1b) at (2.1,2.2);
 \coordinate (R2b) at (3.5,1.8);
 \coordinate (J1) at (2.3,2.4);
 \coordinate (J2) at (2.4,2);
 
 \coordinate (R1) at (1.6,4.1);
 \coordinate (R2) at (3.3,4.3);
  \coordinate (T1) at (1.6,3.2);
 \coordinate (T2) at (3.5,3);
  \coordinate (R1b) at (1.7,2.7);
 \coordinate (R2b) at (3.1,2.3);

   \coordinate(fact1) at (2.4,1);
   \coordinate(fact2) at (3.1,-.4);
 \coordinate (Fs) at (3,3.5);
  \coordinate (Ft) at (4.6,3.5);
   \coordinate (Flabel) at (3.8,3.5);
   \coordinate (Fbs) at (5.3,-.3);
  \coordinate (Fbt) at (6.4,1);
      \coordinate (Fblabel) at (5.9,.1);

 \coordinate (An) at (-.5,.5);
 \coordinate (B1m) at(2,-.2);
  \coordinate (B2m) at(-1.8,-.2);
 \coordinate (Cn) at  (1,.5);
 \coordinate (D1n) at (3,-.2)  ;
  \coordinate (D2n) at (-.5,-.2) ;
  \coordinate (En) at (2,.5);
  \coordinate (F1n) at  (4,-.2);
   \coordinate (F2n) at (1,-.2);
   \coordinate (Gn) at (3,.5);

 \begin{scope}[every coordinate/.style={shift={(0,0)}}]
\draw[draw=gray] ([c]A) .. controls ([c]B) ..
       ([c]C) .. controls ([c]D)  .. ([c]E) .. controls ([c]F).. ([c]G);
      \draw[draw=gray]  plot [smooth, tension=3] coordinates { ([c]A) ([c]H1) ([c]H2)};
       \draw[draw=gray]  plot [smooth, tension=3] coordinates { ([c]G) ([c]J1) ([c]J2)};
       \draw[thick] ([c]R1) to ([c]R2);
            \draw[thick] ([c]T1) to ([c]T2);
            \draw[thick] ([c]R1b) to ([c]R2b);
             \node at ([c]R1) [above] {\scriptsize{$T_1$}};
             \node at ([c]T1) [xshift=-.15cm,yshift=.1cm] {\scriptsize{$T_2$}};
              \node at ([c]R1b) [xshift=-.2cm,yshift=-.1cm] {\scriptsize{$T_3$}};
             \node at ([c]R2) [above] {\scriptsize{$2$}};
             \node at ([c]R2b) [below] {\scriptsize{$1$}};
              \node at ([c]T2) [below] {\scriptsize{$1$}};
   \draw[->,thick] ([c]Fs) to ([c]Ft);
        \node at ([c]Flabel) [above] {\scriptsize{$f$}};
             \node at ([c]J2) [yshift=-.35cm] {\begin{color}{gray}\scriptsize{$Z,g=2$}\end{color}};
 \end{scope}
 
 \coordinate(P1) at  (-0.2,1);
 \coordinate (P2) at ( 3.5,1);
 \coordinate (P3) at (2.5,-1 );
 \coordinate (P4) at (-1.2,-1);

  
  \coordinate (G1) at (1,0);
 \coordinate (G2) at (1.4,.45);
 \coordinate (G3) at (.5,-.8);
 \coordinate (G4) at (1.1,-.8);
 \coordinate (G5) at (1.65,.8);
 \coordinate (G6) at (1.4,.8);
  
   \begin{scope}[every coordinate/.style={shift={(6,3)}}]
\draw[thick] ([c]G1)  ellipse (1.3cm and 0.5cm);
      \draw[thick] ([c]G5)--([c]G3) ;
      \draw[thick] ([c]G6)--([c]G4) ;

 \draw[dashed] ([c]P1)-- ( [c]P2)--([c]P3)--([c]P4)--([c]P1);  
    
     \fill[gray] ([c]G2) circle (3pt);
          \node at ([c]G2) [right] {\begin{color}{gray}\scriptsize{$f(Z)$}\end{color}};

    \end{scope}

 \coordinate(Q1) at  (.5,5.5);
 \coordinate (Q2) at (10,5.5);
 \coordinate (Q3) at (10,1);
 \coordinate (Q4) at (.5,1);
 \coordinate (Qmiddle) at (5.25,1);
 \coordinate (Q3long) at (10,-2.5 );
 \coordinate (Q4long) at (.5,-2.5);
\coordinate (Qlongmiddle) at (5.5,-2.5);
  \begin{scope}[every coordinate/.style={shift={(0,0)}}]     
 \draw ([c]Q1)-- ( [c]Q2)--([c]Q3)--([c]Q4)--([c]Q1);   
  \node at ([c]Qmiddle) [below] {\scriptsize{$\pazocal D^{(2,1^2)}$}};

 \end{scope}

\begin{scope}[every coordinate/.style={shift={(14,0)}}]
\draw[thick] ([c]A) .. controls ([c]B) ..
       ([c]C) .. controls ([c]D)  .. ([c]E) .. controls ([c]F).. ([c]G);
      \draw[thick]  plot [smooth, tension=3] coordinates { ([c]A) ([c]H1) ([c]H2)};
       \draw[thick]  plot [smooth, tension=3] coordinates { ([c]G) ([c]J1) ([c]J2)};
       \draw[->,thick] ([c]Fs) to ([c]Ft);
        \node at ([c]Flabel) [above] {\scriptsize{$f$}};
       \end{scope}
 \coordinate (Q2) at (10,5.5);
 \coordinate (Q3) at (10,1);
 \coordinate (Q4) at (.5,1);
 \coordinate (Qmiddle) at (5.25,1);
 \coordinate (Q3long) at (10,-2.5 );
 \coordinate (Q4long) at (.5,-2.5);
\coordinate (Qlongmiddle) at (5.5,-2.5);

  \begin{scope}[every coordinate/.style={shift={(14,0)}}]     
 \draw ([c]Q1)-- ( [c]Q2)--([c]Q3)--([c]Q4)--([c]Q1);   
 \end{scope}

 \coordinate (Am) at (0,0);
 \coordinate (Bm) at(0.2,-.3);
 \coordinate (Cm) at  ( .4,0);
 \coordinate (Dm) at (.6,.3)  ;
  \coordinate (Em) at (.8,0) ;
  \coordinate (Fm) at (1,-.3);
  \coordinate (Gm) at  (1.2,0);
  \coordinate (H1m) at (-.5,.3);
 \coordinate (H2m) at (-.2,.2);
\coordinate (J1m) at (1.4,.2);
 \coordinate (J2m) at (1.7,.3);
 \coordinate (N1m) at (3.2,-.3);
 \coordinate (N2m) at (.2,-.3);
  \coordinate (N3m) at (3,.3);
 \coordinate(P1) at  (-0.2,1);
 \coordinate (P2) at ( 3.5,1);
 \coordinate (P3) at (2.5,-1 );
 \coordinate (P4) at (-1.2,-1);
 
 \begin{scope}[every coordinate/.style={shift={(20,3)}}]
 \draw[thick]  plot [smooth, tension=3] coordinates {([c]H1m) ([c]H2m) ([c]Am)  };
\draw[thick] ([c]Am) .. controls ([c]Bm) ..
       ([c]Cm) .. controls ([c]Dm)  .. ([c]Em) .. controls ([c]Fm).. ([c]Gm);
    \draw[thick]  plot [smooth, tension=3] coordinates { ([c]Gm) ([c]J1m) ([c]J2m)  };   
     \draw[thick]   ([c]J2m).. controls ([c]N1m) and ([c]N2m).. ([c]N3m);
 \draw[dashed] ([c]P1)-- ( [c]P2)--([c]P3)--([c]P4)--([c]P1);  
    \end{scope}   
       
       
\draw (7.5,1)--(4,-1.5);
\draw (7.5,1)--(13.,-1.5);
\draw (7.5,1)--(23.,-1.5);

 \node at (12.3,3) [above] { $\cap$};


 
  \coordinate(S1) at (3,-.9);
    \coordinate(S2) at (5.3,-.9);
        \coordinate (Y1) at (4.15,-2);
   \coordinate (Y2) at (4.15,.1);
           \coordinate (O) at (4.15,-.9);
    \coordinate (O3) at (3.2,-1.8);
   \coordinate (O4) at (5,-1.8);

 \begin{scope}[every coordinate/.style={shift={(-2,-7)}}]
\draw[draw=gray] ([c]A) .. controls ([c]B) ..
       ([c]C) .. controls ([c]D)  .. ([c]E) .. controls ([c]F).. ([c]G);
      \draw[draw=gray]  plot [smooth, tension=3] coordinates { ([c]A) ([c]H1) ([c]H2)};
       \draw[draw=gray]  plot [smooth, tension=3] coordinates { ([c]G) ([c]J1) ([c]J2)};
       \draw[thick] ([c]R1) to ([c]R2);
            \draw[thick] ([c]T1) to ([c]T2);
                   \draw[thick] ([c]R1b) to ([c]R2b);
                   
             \node at ([c]R1) [above] {\scriptsize{$T_1$}};
             \node at ([c]T1) [xshift=-.15cm,yshift=.1cm] {\scriptsize{$T_2$}};
              \node at ([c]R1b) [xshift=-.1cm,yshift=-.25cm] {\scriptsize{$T_3=\overline T_1$}};
              
             \node at ([c]R2) [above] {\scriptsize{$2$}};
              \node at ([c]T2) [below] {\scriptsize{$1$}};
              \node at ([c]R2b) [below] {\scriptsize{$1$}};
              
   \draw[->,thick] ([c]Fs) to ([c]Ft);
        \node at ([c]Flabel) [above] {\scriptsize{$f$}};
             \node at ([c]J2) [below] {\begin{color}{gray}\scriptsize{$Z$}\end{color}};
                          \draw[->,dotted] ([c]fact1) to ([c]fact2);
                          
                   \draw[thick] ([c]S1) to ([c]S2);
                   \draw[thick] ([c]Y1) to ([c]Y2);
                    \draw[thick] ([c]O) parabola ([c]O3);
                        \draw[thick] ([c]O) parabola ([c]O4);
                        
                        \draw[->,thick] ([c]Fbs) to ([c]Fbt);         
 \node at ([c]Fblabel) [right] {\scriptsize{$\bar{f}$}};

 \end{scope}

 \begin{scope}[every coordinate/.style={shift={(-2,-7)}}]     
 \draw ([c]Q1)-- ( [c]Q2)--([c]Q3long)--([c]Q4long)--([c]Q1);   
  \node at ([c]Qlongmiddle) [below] {\scriptsize{$\pazocal D^{(2,1^2)}\cap\overline{\pazocal M}^{\text{main}}$, conjugate tails}};
 \end{scope}

  
 \coordinate (E4) at  (.9,.2);
 \coordinate (E5) at (-.3,-.2) ;
  \coordinate (E6) at (2,-.2) ;
   \coordinate (E8) at (.9,-.6) ;
   \coordinate (E9) at (.9,1) ;
   
   \coordinate (E7) at (.9,-.2) ;

  \begin{scope}[every coordinate/.style={shift={(4,-4)}}]

 \draw[thick] ([c]E4)  ellipse (1cm and 0.4cm);
       \draw[thick] ([c]E5)--([c]E6);
        \draw[thick] ([c]E8)--([c]E9);
        \node at ([c]E6) [xshift=.35cm]{\scriptsize{$f(T_3)$}};
   
 \draw[dashed] ([c]P1)-- ([c]P2)--([c]P3)--([c]P4)--([c]P1);  
    
     \fill[gray] ([c]E7) circle (3pt);
 
          \node at ([c]E7) [below] {\begin{color}{gray}\scriptsize{$f(Z)$}\end{color}};
           \node at ([c]E7) [xshift=.2cm,yshift=.2cm] {\scriptsize{$D_6$}};
          
\end{scope}



        \coordinate (y1) at (4.5,-2);
   \coordinate (y2) at (3.8,.1);
   \coordinate (z1) at (4.5,.1);
   \coordinate (z2) at (3.8,-2);
           \coordinate (O) at (4.15,-.9);

 \begin{scope}[every coordinate/.style={shift={(8,-7)}}]
\draw[draw=gray] ([c]A) .. controls ([c]B) ..
       ([c]C) .. controls ([c]D)  .. ([c]E) .. controls ([c]F).. ([c]G);
      \draw[draw=gray]  plot [smooth, tension=3] coordinates { ([c]A) ([c]H1) ([c]H2)};
       \draw[draw=gray]  plot [smooth, tension=3] coordinates { ([c]G) ([c]J1) ([c]J2)};
       
       \draw[thick] ([c]R1) to ([c]R2);
            \draw[thick] ([c]T1) to ([c]T2);
                   \draw[thick] ([c]R1b) to ([c]R2b);
           \draw[dashed] (9.7,-2.5)node[xshift=-.15cm,yshift=.09cm]{\scriptsize{$\overline T_1$}} -- (11.5,-2);
             \node at ([c]R2) [yshift=-.15cm] {\scriptsize{$2$}};
              \node at ([c]T2) [yshift=-.15cm] {\scriptsize{$1$}};
              \node at ([c]R2b) [yshift=-.15cm] {\scriptsize{$1$}};
              
             \node at ([c]R1) [xshift=-.15cm] {\scriptsize{$T_1$}};
             \node at ([c]T1) [xshift=-.15cm,yshift=.1cm] {\scriptsize{$T_2$}};
              \node at ([c]R1b) [xshift=-.2cm,yshift=-.1cm] {\scriptsize{$T_3$}};

   \draw[->,thick] ([c]Fs) to ([c]Ft);
        \node at ([c]Flabel) [above] {\scriptsize{$f$}};
             \node at ([c]J2) [below] {\begin{color}{gray}\scriptsize{$Z$}\end{color}};
                          \draw[->,dotted] ([c]fact1) to ([c]fact2);
                          
                   \draw[dashed] ([c]S1) to ([c]S2);
                   \draw[thick] ([c]y1) to ([c]y2);
                   \draw[thick] ([c]z1) to ([c]z2);
                    \draw[thick] ([c]O) parabola ([c]O3);
                        \draw[thick] ([c]O) parabola ([c]O4);
                         \node at ([c]O4) [right] {\scriptsize{$T_1$}};
                        \draw[->,thick] ([c]Fbs) to ([c]Fbt);         
 \node at ([c]Fblabel) [right] {\scriptsize{$\bar{f}$}};

 \end{scope}

 \begin{scope}[every coordinate/.style={shift={(8,-7)}}]     
 \draw ([c]Q1)-- ( [c]Q2)--([c]Q3long)--([c]Q4long)--([c]Q1);   
  \node at ([c]Qlongmiddle) [below] {\scriptsize{$\pazocal D^{(2,1^2)}\cap\overline{\pazocal M}^{\text{main}}$, general core ($I\!I_4$)}};
 \end{scope}


 \coordinate (F5) at (-.3,.2) ;
  \coordinate (F6) at (2,.2) ;
   \coordinate (F8) at (.1,.9) ;
   \coordinate (F9) at (1.4,-.3) ;
   
   \coordinate (F7) at (.9,.2) ;
  
  \coordinate (F10) at (1.7,.2) ;

  \begin{scope}[every coordinate/.style={shift={(14,-4)}}]

       \draw[thick] ([c]F5)--([c]F6);
        \draw[thick] ([c]E8)--([c]E9);
         \draw[thick] ([c]F8)--([c]F9);
   
 \draw[dashed] ([c]P1)-- ( [c]P2)--([c]P3)--([c]P4)--([c]P1);  
    \node at ([c]F6) [xshift=.3cm] {\scriptsize{$f(T_1)$}};
    
     \fill[gray] ([c]F7) circle (3pt);
     \fill[blue] ([c]F10) circle (3pt);
 
          \node at ([c]F7) [xshift=-.3cm,yshift=-.2cm] {\begin{color}{gray}\scriptsize{$f(Z)$}\end{color}};
          \node at ([c]F7) [xshift=.2cm,yshift=.2cm] {\scriptsize{$D_5$}};
\end{scope}



        \coordinate (w1) at (3.15,-2);
   \coordinate (w2) at (3.15,.1);
    \coordinate (u1) at (3.35,-2);
   \coordinate (u2) at (3.35,.1);
           
     \coordinate (t1) at (3.,-.3);
   \coordinate (t2) at (4.8,.1);
   
      \coordinate (s1) at (3.,-1);
   \coordinate (s2) at (4.8,-.8);
   
      \coordinate (x1) at (3.,-1.7);
   \coordinate (x2) at (4.8,-1.5);

 \begin{scope}[every coordinate/.style={shift={(18,-7)}}]
\draw[draw=gray] ([c]A) .. controls ([c]B) ..
       ([c]C) .. controls ([c]D)  .. ([c]E) .. controls ([c]F).. ([c]G);
      \draw[draw=gray]  plot [smooth, tension=3] coordinates { ([c]A) ([c]H1) ([c]H2)};
       \draw[draw=gray]  plot [smooth, tension=3] coordinates { ([c]G) ([c]J1) ([c]J2)};
       \draw[thick] ([c]R1) to ([c]R2);
            \draw[thick] ([c]T1) to ([c]T2);
                   \draw[thick] ([c]R1b) to ([c]R2b);
             \node at ([c]R2) [above] {\scriptsize{$2$}};
              \node at ([c]T2) [below] {\scriptsize{$1$}};
              \node at ([c]R2b) [below] {\scriptsize{$1$}};
              
             \node at ([c]R1) [above] {\scriptsize{$T_1$}};
             \node at ([c]T1) [xshift=-.15cm,yshift=.1cm] {\scriptsize{$T_2$}};
              \node at ([c]R1b) [xshift=-.2cm,yshift=-.1cm] {\scriptsize{$T_3$}};

   \draw[->,thick] ([c]Fs) to ([c]Ft);
        \node at ([c]Flabel) [above] {\scriptsize{$f$}};
             \node at ([c]J2) [below] {\begin{color}{gray}\scriptsize{$Z$}\end{color}};
                          \draw[->,dotted] ([c]fact1) to ([c]fact2);

                        \draw[->,thick] ([c]Fbs) to ([c]Fbt);         
 \node at ([c]Fblabel) [right] {\scriptsize{$\bar{f}$}};

 \end{scope}
 
  \begin{scope}[every coordinate/.style={shift={(18.3,-7)}}]
    \draw[gray] ([c]w1) -- ([c]w2)--([c]u2) -- ([c]u1)-- ([c]w1);
                     \draw[thick] ([c]t1) to ([c]t2);
                       \draw[thick] ([c]s1) to ([c]s2);
                         \draw[thick] ([c]x1) to ([c]x2);
 \end{scope}

 \begin{scope}[every coordinate/.style={shift={(18,-7)}}]     
 \draw ([c]Q1)-- ( [c]Q2)--([c]Q3long)--([c]Q4long)--([c]Q1);   
  \node at ([c]Qlongmiddle) [below] {\scriptsize{$\pazocal D^{(2,1^2)}\cap\overline{\pazocal M}^{\text{main}}$, general core (ribbon)}};
 \end{scope}


   \coordinate (G8) at (-.3,-.1) ;
   \coordinate (G9) at (2,-.1) ;
   
   \coordinate (E7) at (.9,-.2) ;
  
    \coordinate (G10) at (2.3,0.1) ; 

  \begin{scope}[every coordinate/.style={shift={(24,-4)}}]

 \draw[thick] ([c]E4)  ellipse (1cm and 0.4cm);
       \draw[thick] ([c]E5)--([c]E6);
        \draw[thick] ([c]G8)--([c]G9);
   
 \draw[dashed] ([c]P1)-- ( [c]P2)--([c]P3)--([c]P4)--([c]P1);  
    
     \fill[gray] ([c]E7) circle (3pt);
 
          \node at ([c]E7) [below] {\begin{color}{gray}\scriptsize{$f(Z)$}\end{color}};
           \node at ([c]G10) [above] {\scriptsize{$f(T_1)$}};
          \node at ([c]E6) [xshift=.5cm] {\scriptsize{$f(T_2)=$}};
          \node at ([c]E6) [xshift=.5cm,yshift=-.3cm] {\scriptsize{$f(T_3)$}};
\end{scope}
 \end{tikzpicture}
\caption{$\pazocal D^{(2,1^2)},\dim=14$; $\pazocal D^{(2,1^2)}\cap\overline{\pazocal M}^{\text{main}},\dim=12$}
\label{D211}
\end{figure}

  \begin{itemize}[leftmargin=.5cm]
  \item either the attaching point of the degree $2$ tail is conjugate to another attaching point, in which case factoring through a type $I\!I_3$ (i.e. $D_6$) singularity implies that the corresponding line is tangent to the conic at the concurrency point;
  \item or the core is general, and  the (partially detsabilized) maps factors through a type $I\!I_4$ singularity with special branches the degree 2 tail $T_1$ and its conjugate (obtained by sprouting); this implies that the image is the union of three concurrent lines, one covered two-to-one with one ramification point over the singular point;
  \item or the core is general, and factoring through a contracted ribbon (which in the modification $C\to\overline C$ is the image of the irreducible genus 2 component) with three tails shows that the image is the union of a conic with a tangent line (the two degree $1$ tails must map to the same line).
 \end{itemize}
  In any case the dimension is $7+5=5+1+6=6+6=12$. See Figure \ref{D211}.
  
 Notice that we did not consider the lifts of $C$ to an admissible cover such that one of the tails is attached to a Weierstrass point. For $T_2$ and $T_3$, this is simply because there is no factorisation through a singularity with special cuspidal branch of degree one; if instead is $T_1$ that is attached to a Weierstrass point,  then a map factoring through an $I_3$ singularity will have for image three concurrent lines, with $T_1$ covering $2:1$ its image and ramifying at the node. The latter however is the same factorisation condition we saw in case two above.

 \item[$\pazocal D^{(1^4)}$ component] The image consists of four concurrent lines. The dimension is $13$. The intersection with main has only one component consisting of maps  factoring through a contracted ribbon with four tails. We will show that, if we just look at the valuation of the sections around the nodes, we can always construct \emph{some} ribbon with four tails to which they descend.
However, we will see that there is an extra condition on cross-ratios, matching that of the four lines with that of the four nodes under the hyperelliptic projection on the core.

 To verify the first claim, denote by $L$ the line bundle which restricts to the trivial one on the core, and has degree $1$ on every tail.
 Let
  $V$ be a subspace of the space $\operatorname{H}^0(C,L)_0$ of sections vanishing along $Z.$   For $V$ to be smoothable it is necessary and sufficient that it has codimension $2$ in $\operatorname{H}^0(C,L)_0$. The sections extending to a given smoothing span a codimension $2$ subspace in $\operatorname{H}^0(C,L)$, because the dimension of the space of  global sections on the generic fiber is $3$, and it is $5$  on the nodal curve. Hence the codimension $2$ condition is necessary.
  We now want to argue that it is also sufficient by showing that we can always construct a  tailed ribbon $\overline{C},$ 
  to which the sections in $V$  descend.  
 

We will see that a tailed ribbon $\overline C=R\cup_{\epsilon_i} T_i$ to which these sections descend is completely determined by a choice of tangent vectors on the tails, which is itself determined by a choice of local coordinates.
 
The sheaf of regular functions on $\overline C$ is defined as the kernel of a morphism:
  \[\mathcal O_R\oplus\bigoplus_i\mathcal O_{T_i }\xrightarrow{\nu}\bigoplus_i \mathbb C[\epsilon_i]/\epsilon_i^2 \]
  where $ O_R\cong\mathcal O_{\PP^1}\oplus\mathcal O_{\PP^1}(1)$, and the second summand gives regular functions on the ribbon vanishing on the reduced curve.
The morphism $\nu$ restricted to the tails $T_i$ only depends on the behaviour of the functions to the first order, and it is thus fully determined by a choice of tangent vectors on the tails.
Once $\nu$ is defined on the tails, in order to determine $\overline C,$  we may choose an identification of the sections of the ribbon vanishing on the reduced curve $\operatorname{H}^0(R,\mathcal O_{\PP^1}(1))$  with the two dimensional vector space $V$.  For any such choice, the sections of  $V$ descend to $\overline C$ by construction.


 To see that factorisation through a given ribbon involves a condition on the cross ratio, consider a general smoothing $f\colon\mathcal C\to\PP^2_\dvr$ over the spectrum of a discrete valuation ring $\dvr$ (sometimes called \emph{trait} in this note). The image of the general fibre $\mathcal C_\eta$ is a cubic with one node, whose limit at $0$ is the point $x$ where the core $Z$ of $\mathcal C_0$ is contracted, i.e. the concurrency point of the four lines. Let us blow $\PP^2$ up at $x$. We thus obtain a diagram:
 \bcd
 \mathcal C\ar[d]\ar[r,dashed, "\widetilde{f}" ]\ar[dr] &\operatorname{Bl}_x\PP^2\ar[d]\\
 \overline{\mathcal C}\ar[r] & \PP^2
 \ecd
 The strict transform of $\mathcal C_\eta$ intersects the exceptional divisor $E$ transversely at two points $p_1$ and $p_2$. Write $\widetilde{f}^*E=p_1+p_2+\alpha Z$. Matching with a tail $T_i$ of $\mathcal C_0$ we get:
 \[1=\widetilde{f}^*E\cdot T_i=\alpha.\]
 Now, matching with $Z$ we get:
 \[-\operatorname{deg}(\widetilde{f}_{|Z})=\widetilde{f}^*E\cdot Z=\operatorname{deg}(p_1+p_2+Z^2)=\operatorname{deg}\left(\sum_{i=1}^2p_i-\sum_{j=1}^4q_j\right),\]
 where $q_j$ denotes a node of $\mathcal C_0$. In particular, $\widetilde{f}_{|Z}\colon Z\to E$ has to be the hyperelliptic cover. This shows that, for a map in $\pazocal D^{(1^4)}$ to be smoothable, the cross-ratio of the image lines in $\PP^2$ and that of the tail attaching points (after hyperelliptic map) must coincide. This cuts the dimension down by one to $12$.
 Notice that the condition on the cross-ratio only depends on the marked curve $\left(Z, \overline{C\setminus Z}\cap Z\right)$ and on its image along $f$, but not  on the  choice of a smoothing.  
 
 \item[${}^4\!\pazocal{E}$ component] The general point of this component has a reducible core, consisting of an elliptic curve $E_1$ normalising a two-nodal quartic, and another elliptic curve $E_2$ contracted anywhere on it. The dimension is $14$. Smoothability is really a genus one problem: a map is smoothable if its image contains a cusp, i.e. it is of type $A_1-A_2$. The dimension is $12$. See Figure \ref{4E}.
 
\begin{figure}[h]
   \begin{tikzpicture}[scale=.4]

\coordinate (H1) at (2.3,4.3);
 \coordinate (H2) at (2.4,4.8);
 
 \coordinate (AE1) at (2,4);
 \coordinate (BE1) at (1.7,3.8);
 \coordinate (CE1) at   (2,3.6);
 \coordinate (HE1) at (2.3,3.3);
 \coordinate (HE2) at (2.4,2.8);
 
 \coordinate (JE1) at (2.4,3.6);
 \coordinate (DE2) at (2.3,3.4) ;
 
  \coordinate (EE2) at (2,3.2) ;
  \coordinate (FE2) at (1.7,3);
  \coordinate (GE2) at  (2,2.8);
 \coordinate (J1) at (2.3,2.5);
 \coordinate (J2) at (2.4,2);
 
   \coordinate(fact1) at (2.4,1);
   \coordinate(fact2) at (3.1,-.4);
   
 \coordinate (Fs) at (3,3.5);
  \coordinate (Ft) at (4.6,3.5);
   \coordinate (Flabel) at (3.8,3.5);
   
    \coordinate (Fbs) at (5.3,-.3);
  \coordinate (Fbt) at (6.4,1);
    \coordinate (Fblabel) at (5.9,.1);

   \begin{scope}[every coordinate/.style={shift={(0,0)}}]
\draw[draw=gray] ([c]AE1) .. controls ([c]BE1) ..
       ([c]CE1);
     \draw[thick] ([c]EE2)  .. controls ([c]FE2) .. ([c]GE2);
     
      \draw[draw=gray]  plot [smooth, tension=3] coordinates { ([c]AE1) ([c]H1) ([c]H2)};
       \draw[draw=gray]  plot [smooth, tension=3] coordinates { ([c]CE1) ([c]HE1) ([c]HE2)};
       
        \draw[thick]  plot [smooth, tension=3] coordinates { ([c]EE2) ([c]DE2) ([c]JE1)};
       \draw[thick]  plot [smooth, tension=3] coordinates { ([c]GE2) ([c]J1) ([c]J2)};

       \draw[->,thick] ([c]Fs) to ([c]Ft);
        \node at ([c]Flabel) [above] {\scriptsize{$f$}};
             \node at ([c]J2) [yshift=-.2cm] {\scriptsize{$E_1,g=1,d=4$}};
         \node at ([c]H2) [xshift=.15cm,yshift=-.08cm] {\begin{color}{gray}\scriptsize{$E_2,g=1$}\end{color}};

 \end{scope}

 \coordinate(Q1) at  (0,5.5);
 \coordinate (Q2) at (10,5.5);
 \coordinate (Q3) at (10,1);
 \coordinate (Q4) at (0,1);
 \coordinate (Qmiddle) at (5.25,1);
 \coordinate (Q3long) at (10,-2.5 );
 \coordinate (Q4long) at (0,-2.5);
\coordinate (Qlongmiddle) at (5,-2.5);

  \begin{scope}[every coordinate/.style={shift={(0,0)}}]     
 \draw (Q1)-- ( Q2)--(Q3)--(Q4)--(Q1);   
 \end{scope}


 \coordinate (An) at (-.5,.3);
 \coordinate (B1m) at(2,-.5);
  \coordinate (B2m) at(-1.8,-.5);
 \coordinate (Cn) at  (1,.3);
 \coordinate (D1n) at (3,-.5)  ;
  \coordinate (D2n) at (-.5,-.5) ;
  \coordinate (En) at (1.9,.4);

   \coordinate (F2n) at (2.4,.6);
     \coordinate (F1n) at  (2.2,0);
     \coordinate (F3n) at  (2,-.4);
   \coordinate (Gn) at (2.5,-.9);

 \coordinate(P1) at  (-0.2,1);
 \coordinate (P2) at ( 3.5,1);
 \coordinate (P3) at (2.5,-1 );
 \coordinate (P4) at (-1.2,-1);
 
 \begin{scope}[every coordinate/.style={shift={(6,3)}}]

\draw[thick] ([c]An) .. controls ([c]B1m) and ([c]B2m) ..
       ([c]Cn) .. controls ([c]D1n) and ([c]D2n)  .. ([c]En);
    
   \draw[thick] ([c]En)  .. controls ([c]F2n)   ..  ([c]F1n).. controls ([c]F3n) .. ([c]Gn) ;
 \draw[dashed] ([c]P1)-- ( [c]P2)--([c]P3)--([c]P4)--([c]P1);

     \fill[gray] ([c]Cn) circle (3pt);
          \node at ([c]Cn) [yshift=.23cm] {\begin{color}{gray}\scriptsize{$f(E_2)$}\end{color}};

    \end{scope}


    \coordinate (A) at (2,4);
 \coordinate (B) at (1.7,3.8);
 \coordinate (C) at   (2,3.6);
 \coordinate (D) at (2.3,3.4) ;
  \coordinate (E) at (2,3.2) ;
  \coordinate (F) at (1.7,3);
  \coordinate (G) at  (2,2.8);
  \coordinate (H1) at (2.3,4.3);
 \coordinate (H2) at (2.4,4.8);
  \coordinate (J1) at (2.3,2.4);
 \coordinate (J2) at (2.4,2);

\begin{scope}[every coordinate/.style={shift={(14,0)}}]
\draw[thick] ([c]A) .. controls ([c]B) ..
       ([c]C) .. controls ([c]D)  .. ([c]E) .. controls ([c]F).. ([c]G);
      \draw[thick]  plot [smooth, tension=3] coordinates { ([c]A) ([c]H1) ([c]H2)};
       \draw[thick]  plot [smooth, tension=3] coordinates { ([c]G) ([c]J1) ([c]J2)};
       \draw[->,thick] ([c]Fs) to ([c]Ft);
        \node at ([c]Flabel) [above] {\scriptsize{$f$}};
       \end{scope}

  \begin{scope}[every coordinate/.style={shift={(14,0)}}]     
 \draw ([c]Q1)-- ( [c]Q2)--([c]Q3)--([c]Q4)--([c]Q1);   
 \end{scope}
 

 \coordinate (Am) at (0,0);
 \coordinate (Bm) at(0.2,-.3);
 \coordinate (Cm) at  ( .4,0);
 \coordinate (Dm) at (.6,.3)  ;
  \coordinate (Em) at (.8,0) ;
  \coordinate (Fm) at (1,-.3);
  \coordinate (Gm) at  (1.2,0);
  \coordinate (H1m) at (-.5,.3);
 \coordinate (H2m) at (-.2,.2);
\coordinate (J1m) at (1.4,.2);
 \coordinate (J2m) at (1.7,.3);
 \coordinate (N1m) at (3.2,-.3);
 \coordinate (N2m) at (.2,-.3);
  \coordinate (N3m) at (3,.3);

 \begin{scope}[every coordinate/.style={shift={(20,3)}}]
 
 \draw[thick]  plot [smooth, tension=3] coordinates {([c]H1m) ([c]H2m) ([c]Am)  };
 
\draw[thick] ([c]Am) .. controls ([c]Bm) ..
       ([c]Cm) .. controls ([c]Dm)  .. ([c]Em) .. controls ([c]Fm).. ([c]Gm);
    \draw[thick]  plot [smooth, tension=3] coordinates { ([c]Gm) ([c]J1m) ([c]J2m)  };   
     \draw[thick]   ([c]J2m).. controls ([c]N1m) and ([c]N2m).. ([c]N3m);
 \draw[dashed] ([c]P1)-- ( [c]P2)--([c]P3)--([c]P4)--([c]P1);  
    \end{scope}   
       
\node at (5,1) [below] {\scriptsize{${}^4\pazocal E$}};
       
\draw (7.5,1)--(11,-.5);
\draw (19.5,1)--(16.,-.5);

 \node at (12.3,3) [above] { $\cap$};


 \begin{scope}[every coordinate/.style={shift={(8,-6)}}]

 \draw ([c]Q1)-- ( [c]Q2)--([c]Q3long)--([c]Q4long)--([c]Q1);   
 \end{scope}

 \begin{scope}[every coordinate/.style={shift={(8,-6)}}]
\draw[draw=gray] ([c]AE1) .. controls ([c]BE1) ..
       ([c]CE1);
     \draw[thick] ([c]EE2)  .. controls ([c]FE2) .. ([c]GE2);
     
      \draw[draw=gray]  plot [smooth, tension=3] coordinates { ([c]AE1) ([c]H1) ([c]H2)};
       \draw[draw=gray]  plot [smooth, tension=3] coordinates { ([c]CE1) ([c]HE1) ([c]HE2)};
       
        \draw[thick]  plot [smooth, tension=3] coordinates { ([c]EE2) ([c]DE2) ([c]JE1)};
       \draw[thick]  plot [smooth, tension=3] coordinates { ([c]GE2) ([c]J1) ([c]J2)};

       \draw[->,thick] ([c]Fs) to ([c]Ft);
        \node at ([c]Flabel) [above] {\scriptsize{$f$}};
             \node at ([c]J2) [below] {\scriptsize{$E_1$}};
         \node at ([c]H2) [xshift=.2cm,yshift=-.1cm] {\begin{color}{gray}\scriptsize{$E_2$}\end{color}};

                          \draw[->,dotted] ([c]fact1) to ([c]fact2);

                        \draw[->,thick] ([c]Fbs) to ([c]Fbt);         
 \node at ([c]Fblabel) [right] {\scriptsize{$\bar{f}$}};
\node at ([c]Qlongmiddle) [below] {\scriptsize{${}^4\pazocal E\cap\overline{\pazocal M}^{\text{main}}$}};
 \end{scope}
 
\begin{scope}[every coordinate/.style={shift={(10.5,-9.5)}}]

\draw[thick] ([c]BE1) parabola ([c]H2);
 \draw[thick] ([c]BE1) parabola ([c]JE1);
 \draw[thick] ([c]EE2)  .. controls ([c]FE2) .. ([c]GE2);
 
\draw[thick]  plot [smooth, tension=3] coordinates { ([c]EE2) ([c]DE2) ([c]JE1)};
       \draw[thick]  plot [smooth, tension=3] coordinates { ([c]GE2) ([c]J1) ([c]J2)};

\end{scope}


\coordinate (AA1) at (.5,.7);
\coordinate (AA2) at (0,.2);
\coordinate (AS0) at (.1,.2);

\coordinate (AS2) at (-.5,-.1);
\coordinate (AS4) at (.3,-.4);
\coordinate (AS3) at (-.2,-.3);
\coordinate (AS5) at (.7,-1);

\coordinate (AA3) at (1,-.2);
\coordinate (AA4) at (4,1);
\coordinate (AA5) at (1,1);
\coordinate (AA6) at (2.1,-.5);

 \coordinate(P1) at  (-0.2,1);
 \coordinate (P2) at ( 3.5,1);
 \coordinate (P3) at (2.5,-1 );
 \coordinate (P4) at (-1.2,-1);
 
 \begin{scope}[every coordinate/.style={shift={(14,-3)}}]
 
 \draw[thick] ([c]AA3) parabola ([c]AA1); 
  \draw[thick] ([c]AA1) .. controls ([c]AS0) .. ([c]AA2);
   \draw[thick] ([c]AA3) .. controls ([c]AA4) and ([c]AA5).. ([c]AA6); 
    \draw[thick] ([c]AA2) .. controls ([c]AS2) .. ([c]AS3)..controls ([c]AS4) .. ([c]AS5); 
\draw[dashed] ([c]P1)-- ( [c]P2)--([c]P3)--([c]P4)--([c]P1); 
     \fill[gray] ([c]AA1) circle (3pt);
      \node at ([c]AA1) [yshift=.2cm] {\begin{color}{gray}\scriptsize{$f(Z)$}\end{color}};
      \node at ([c]AA1) [xshift=.25cm] {\scriptsize{$A_2$}};
            \node at ([c]AA6) [xshift=-.2cm] {\scriptsize{$A_1$}};
\end{scope}

\end{tikzpicture}
\caption{${}^4\!\pazocal E,\dim=14$; ${}^4\!\pazocal E\cap\overline{\pazocal M}^{\text{main}},\dim=12$}
\label{4E}	
\end{figure}

 \item[${}^3\!\pazocal{E}^{(1)}$ component] The image consists of a smooth cubic $E_1$ and a general line; the second elliptic curve $E_2$ is contracted to one of the three intersection points. The dimension of the boundary component ${}^3\!\pazocal{E}^{(1)}$ is $13$. At the intersection with \emph{main}, we see those maps such that the line is tangent to the cubic. This is again a divisor in \emph{main}, so it has dimension $12$. See Figure \ref{3E1}.

\begin{figure}[h]
   \begin{tikzpicture}[scale=.4]

\coordinate (H1) at (2.3,4.3);
 \coordinate (H2) at (2.4,4.8);
 
 \coordinate (AE1) at (2,4);
 \coordinate (BE1) at (1.7,3.8);
 \coordinate (CE1) at   (2,3.6);
 \coordinate (HE1) at (2.3,3.3);
 \coordinate (HE2) at (2.4,2.8);
 
 \coordinate (JE1) at (2.4,3.6);
 \coordinate (DE2) at (2.3,3.4) ;
 
  \coordinate (EE2) at (2,3.2) ;
  \coordinate (FE2) at (1.7,3);
  \coordinate (GE2) at  (2,2.8);
 \coordinate (J1) at (2.3,2.5);
 \coordinate (J2) at (2.4,2);
 
 \coordinate (T1) at (1.8,4.1);
 \coordinate (T2) at (3.4,4.3);

   \coordinate(fact1) at (2.4,1); 
   \coordinate(fact2) at (3.1,-.4);
   
 \coordinate (Fs) at (3,3.5);
  \coordinate (Ft) at (4.6,3.5);
   \coordinate (Flabel) at (3.8,3.5);
   
    \coordinate (Fbs) at (5.3,-.3);
  \coordinate (Fbt) at (6.4,1);
    \coordinate (Fblabel) at (5.9,.1);

   \begin{scope}[every coordinate/.style={shift={(0,0)}}]
\draw[draw=gray] ([c]AE1) .. controls ([c]BE1) ..
       ([c]CE1);
     \draw[thick] ([c]EE2)  .. controls ([c]FE2) .. ([c]GE2);
     
      \draw[draw=gray]  plot [smooth, tension=3] coordinates { ([c]AE1) ([c]H1) ([c]H2)};
       \draw[draw=gray]  plot [smooth, tension=3] coordinates { ([c]CE1) ([c]HE1) ([c]HE2)};
       
        \draw[thick]  plot [smooth, tension=3] coordinates { ([c]EE2) ([c]DE2) ([c]JE1)};
       \draw[thick]  plot [smooth, tension=3] coordinates { ([c]GE2) ([c]J1) ([c]J2)};
       
        \draw[thick] ([c]T1) -- ([c]T2) node[yshift=.15cm]{\scriptsize{$1$}};
       
       \draw[->,thick] ([c]Fs) to ([c]Ft);
        \node at ([c]Flabel) [above] {\scriptsize{$f$}};
           \node at ([c]J2) [yshift=-.2cm] {\scriptsize{$E_1,g=1,d=3$}};
         \node at ([c]H2) [xshift=-.3cm,yshift=.1cm] {\begin{color}{gray}\scriptsize{$E_2,g=1$}\end{color}};

 \end{scope}

 \coordinate (Q2) at (10,5.5);
 \coordinate (Q3) at (10,1);
 \coordinate (Q4) at (0,1);
 \coordinate (Qmiddle) at (5,1);
 \coordinate (Q3long) at (10,-2.5 );
 \coordinate (Q4long) at (0,-2.5);
\coordinate (Qlongmiddle) at (5.5,-2.5);

 \coordinate(P1) at  (-0.2,1);
 \coordinate (P2) at ( 3.5,1);
 \coordinate (P3) at (2.5,-1 );
 \coordinate (P4) at (-1.2,-1);

  \begin{scope}[every coordinate/.style={shift={(0,0)}}]     
 \draw (Q1)-- ( Q2)--(Q3)--(Q4)--(Q1);   
 \end{scope}
 

 \coordinate (An) at (0,.5);
 \coordinate (B1m) at(.5,-.5);
   \coordinate (Cn) at  (1.3,.3);
 \coordinate (Dn) at (1.6,.8)  ;
  
  \coordinate (En) at (2,-.5);

   \coordinate (F2n) at (-.2,.1);
     \coordinate (F1n) at  (2.2,.1);
     \coordinate (Gn) at  (1.2,.15);

 \begin{scope}[every coordinate/.style={shift={(6,3)}}]

\draw[thick] ([c]An) .. controls ([c]B1m) ..
       ([c]Cn) .. controls ([c]Dn)   .. ([c]En);
    
   \draw[thick]  ([c]F2n) -- ([c]F1n);
 \draw[dashed] ([c]P1)-- ( [c]P2)--([c]P3)--([c]P4)--([c]P1);

     \fill[gray] ([c]Gn) circle (3pt);
          \node at ([c]Gn) [above] {\begin{color}{gray}\scriptsize{$f(E_2)$}\end{color}};

    \end{scope}   

     \node at (5,1) [below] {\scriptsize{${}^3\!\pazocal E^{(1)}$}};


    \coordinate (A) at (2,4);
 \coordinate (B) at (1.7,3.8);
 \coordinate (C) at   (2,3.6);
 \coordinate (D) at (2.3,3.4) ;
  \coordinate (E) at (2,3.2) ;
  \coordinate (F) at (1.7,3);
  \coordinate (G) at  (2,2.8);
  \coordinate (H1) at (2.3,4.3);
 \coordinate (H2) at (2.4,4.8);
  \coordinate (J1) at (2.3,2.4);
 \coordinate (J2) at (2.4,2);

\begin{scope}[every coordinate/.style={shift={(14,0)}}]
\draw[thick] ([c]A) .. controls ([c]B) ..
       ([c]C) .. controls ([c]D)  .. ([c]E) .. controls ([c]F).. ([c]G);
      \draw[thick]  plot [smooth, tension=3] coordinates { ([c]A) ([c]H1) ([c]H2)};
       \draw[thick]  plot [smooth, tension=3] coordinates { ([c]G) ([c]J1) ([c]J2)};
       \draw[->,thick] ([c]Fs) to ([c]Ft);
        \node at ([c]Flabel) [above] {\scriptsize{$f$}};
       \end{scope}

  \begin{scope}[every coordinate/.style={shift={(14,0)}}]     
 \draw ([c]Q1)-- ( [c]Q2)--([c]Q3)--([c]Q4)--([c]Q1);   
 \end{scope}
 

 \coordinate (Am) at (0,0);
 \coordinate (Bm) at(0.2,-.3);
 \coordinate (Cm) at  ( .4,0);
 \coordinate (Dm) at (.6,.3)  ;
  \coordinate (Em) at (.8,0) ;
  \coordinate (Fm) at (1,-.3);
  \coordinate (Gm) at  (1.2,0);
  \coordinate (H1m) at (-.5,.3);
 \coordinate (H2m) at (-.2,.2);
\coordinate (J1m) at (1.4,.2);
 \coordinate (J2m) at (1.7,.3);
 \coordinate (N1m) at (3.2,-.3);
 \coordinate (N2m) at (.2,-.3);
  \coordinate (N3m) at (3,.3);

 \begin{scope}[every coordinate/.style={shift={(20,3)}}]
 
 \draw[thick]  plot [smooth, tension=3] coordinates {([c]H1m) ([c]H2m) ([c]Am)  };
 
\draw[thick] ([c]Am) .. controls ([c]Bm) ..
       ([c]Cm) .. controls ([c]Dm)  .. ([c]Em) .. controls ([c]Fm).. ([c]Gm);
    \draw[thick]  plot [smooth, tension=3] coordinates { ([c]Gm) ([c]J1m) ([c]J2m)  };   
     \draw[thick]   ([c]J2m).. controls ([c]N1m) and ([c]N2m).. ([c]N3m);
 \draw[dashed] ([c]P1)-- ( [c]P2)--([c]P3)--([c]P4)--([c]P1);  
    \end{scope}

\draw (7.5,1)--(11,-.5);
\draw (19.5,1)--(16.,-.5);

 \node at (12.3,3) [above] { $\cap$};

 
  \coordinate (Je1) at (2.9,4.2);
  \coordinate (Je2) at (2.3,4.8);
  \coordinate (Y2) at (2.8,5);
   \coordinate (Y1) at (2.8,3);
 
 \begin{scope}[every coordinate/.style={shift={(8,-6)}}]

 \draw ([c]Q1)-- ( [c]Q2)--([c]Q3long)--([c]Q4long)--([c]Q1);   
 \end{scope}

 \begin{scope}[every coordinate/.style={shift={(8,-6)}}]
\draw[draw=gray] ([c]AE1) .. controls ([c]BE1) ..
       ([c]CE1);
     \draw[thick] ([c]EE2)  .. controls ([c]FE2) .. ([c]GE2);
     
      \draw[draw=gray]  plot [smooth, tension=3] coordinates { ([c]AE1) ([c]H1) ([c]H2)};
       \draw[draw=gray]  plot [smooth, tension=3] coordinates { ([c]CE1) ([c]HE1) ([c]HE2)};
       
        \draw[thick]  plot [smooth, tension=3] coordinates { ([c]EE2) ([c]DE2) ([c]JE1)};
       \draw[thick]  plot [smooth, tension=3] coordinates { ([c]GE2) ([c]J1) ([c]J2)};

       \draw[->,thick] ([c]Fs) to ([c]Ft);
        \node at ([c]Flabel) [above] {\scriptsize{$f$}};
             \node at ([c]J2) [below] {\scriptsize{$E_1$}};
         \node at ([c]H2) [right] {\begin{color}{gray}\scriptsize{$E_2$}\end{color}};
         
          \draw[thick] ([c]T2)  -- ([c]T1);
         
                          \draw[->,dotted] ([c]fact1) to ([c]fact2);

                        \draw[->,thick] ([c]Fbs) to ([c]Fbt);         
 \node at ([c]Fblabel) [right] {\scriptsize{$\bar{f}$}};
\node at ([c]Qlongmiddle) [below] {\scriptsize{${}^3\!\pazocal E^{(1)}\cap\overline{\pazocal M}^{\text{main}}$}};
 \end{scope}
 
\begin{scope}[every coordinate/.style={shift={(9.7,-10)}}]

  \draw[thick] ([c]EE2)  .. controls ([c]FE2) .. ([c]GE2);
     
      \draw[thick]  plot [smooth, tension=3] coordinates { ([c]EE2) ([c]DE2)};
       \draw[thick]  plot [smooth, tension=3] coordinates { ([c]GE2) ([c]J1) ([c]J2)};

        \draw[thick] ([c]Je2)  .. controls ([c]Je1) .. ([c]DE2);
        \draw[thick] ([c]Y2) -- ([c]Y1);
       
\end{scope}

\coordinate (f2n) at (-.2,-.28);
     \coordinate (f1n) at  (2.2,-.28);
\coordinate (gn) at (.5,-.28);
 
 \begin{scope}[every coordinate/.style={shift={(14,-3)}}]
 
 \draw[thick] ([c]An) .. controls ([c]B1m) ..
       ([c]Cn) .. controls ([c]Dn)   .. ([c]En);
    
   \draw[thick]  ([c]f2n) -- ([c]f1n);
 \draw[dashed] ([c]P1)-- ( [c]P2)--([c]P3)--([c]P4)--([c]P1);

     \fill[gray] ([c]gn) circle (3pt);
          \node at ([c]gn) [above] {\begin{color}{gray}\scriptsize{$f(E_2)$}\end{color}};
          \node at ([c]gn) [yshift=-.15cm] {\scriptsize{$A_3$}};
          \node at ([c]gn) [xshift=.85cm,yshift=-.15cm] {\scriptsize{$A_1$}};
\end{scope} 

\end{tikzpicture}
\caption{${}^3\!\pazocal E^{(1)},\dim=14$; ${}^3\!\pazocal E^{(1)}\cap\overline{\pazocal M}^{\text{main}},\dim=12$}
\label{3E1}
\end{figure}

 \item[${}^2\!\pazocal{E}^{(2)}$ component] The map restricts to the two-to-one cover of a line on $E_1$, and the embedding of a conic on the rational tail; the dimension of this component is $13$. The intersection with \emph{main} has two components: 
 \begin{itemize}[leftmargin=.5cm]
  \item factoring through a tacnode means that the line is tangent to the conic;
  \item sprouting, or equivalently replacing $E_2$ with a cusp on $E_1$, means that the conic intersects the line in one of the four branching points of the two-to-one cover.
 \end{itemize}
 Both have dimension $12$. See Figure \ref{2E2}.
 
 Notice that if we instead consider the sprouting at $E_1\cap E_2$, i.e. it is the rational component, then factorisation imposes that the map restrict to a two-to-one cover of a line on the rational component with a ramification point at the intersection with the line $f(E_1)$. These maps are in the closure of the first component of the intersection.

    \begin{figure}[h]
   \begin{tikzpicture}[scale=.5]

\coordinate (H1) at (2.3,4.3);
 \coordinate (H2) at (2.4,4.8);
 
 \coordinate (AE1) at (2,4);
 \coordinate (BE1) at (1.7,3.8);
 \coordinate (CE1) at   (2,3.6);
 \coordinate (HE1) at (2.3,3.3);
 \coordinate (HE2) at (2.4,2.8);
 
 \coordinate (JE1) at (2.4,3.6);
 \coordinate (DE2) at (2.3,3.4) ;
 
  \coordinate (EE2) at (2,3.2) ;
  \coordinate (FE2) at (1.7,3);
  \coordinate (GE2) at  (2,2.8);
 \coordinate (J1) at (2.3,2.5);
 \coordinate (J2) at (2.4,2);
 
 \coordinate (T1) at (1.8,4.1);
 \coordinate (T2) at (3.4,4.3);

   \coordinate(fact1) at (2.4,1);
   \coordinate(fact2) at (3.1,-.4);
   
 \coordinate (Fs) at (3,3.5);
  \coordinate (Ft) at (4.6,3.5);
   \coordinate (Flabel) at (3.8,3.5);
   
    \coordinate (Fbs) at (5.3,-.3);
  \coordinate (Fbt) at (6.4,1);
    \coordinate (Fblabel) at (5.9,.1);

   \begin{scope}[every coordinate/.style={shift={(0,0)}}]
\draw[draw=gray] ([c]AE1) .. controls ([c]BE1) ..
       ([c]CE1);
     \draw[thick] ([c]EE2)  .. controls ([c]FE2) .. ([c]GE2);
     
      \draw[draw=gray]  plot [smooth, tension=3] coordinates { ([c]AE1) ([c]H1) ([c]H2)};
       \draw[draw=gray]  plot [smooth, tension=3] coordinates { ([c]CE1) ([c]HE1) ([c]HE2)};
       
        \draw[thick]  plot [smooth, tension=3] coordinates { ([c]EE2) ([c]DE2) ([c]JE1)};
       \draw[thick]  plot [smooth, tension=3] coordinates { ([c]GE2) ([c]J1) ([c]J2)};
       
           \draw[thick] ([c]T1) --node[xshift=.1cm,yshift=-.15cm]{\scriptsize{$2$}} ([c]T2) ;
       
       \draw[->,thick] ([c]Fs) to ([c]Ft);
        \node at ([c]Flabel) [above] {\scriptsize{$f$}};
             \node at ([c]J2) [xshift=.1cm,below] {\scriptsize{$E_1,g=1,d=2$}};
         \node at ([c]H2) [xshift=-.1cm,yshift=-.1cm] {\begin{color}{gray}\scriptsize{$E_2,g=1$}\end{color}};

 \end{scope}

 \coordinate(Q1) at  (.5,5.5);
 \coordinate (Q2) at (10,5.5);
 \coordinate (Q3) at (10,1 );
 \coordinate (Q4) at (.5,1);
 \coordinate (Q3long) at (10,-2.5 );
 \coordinate (Q4long) at (.5,-2.5);
\coordinate (Qlongmiddle) at (5.5,-2.5);

 \coordinate(P1) at  (-0.2,1);
 \coordinate (P2) at ( 3.5,1);
 \coordinate (P3) at (2.5,-1 );
 \coordinate (P4) at (-1.2,-1);

  \begin{scope}[every coordinate/.style={shift={(0,0)}}]     
 \draw (Q1)-- ( Q2)--(Q3)--(Q4)--(Q1);   
 \end{scope}
 

 \coordinate (An) at (1,.1);

   \coordinate (F2n) at (-.2,.1);
     \coordinate (F1n) at  (2.2,.1);
     \coordinate (Gn) at  (1.68,.1);

 \begin{scope}[every coordinate/.style={shift={(6,3)}}]

   \draw[thick]  ([c]F2n) -- ([c]F1n);
 \draw[dashed] ([c]P1)-- ( [c]P2)--([c]P3)--([c]P4)--([c]P1);  
    \fill[blue] ([c]Gn)[xshift=-1cm] circle (3pt) ([c]Gn)[xshift=-1.6cm] circle (3pt) ([c]Gn)[xshift=-.5cm] circle (3pt) ([c]Gn)[xshift=.3cm] circle (3pt);
     \draw[thick]([c]An) circle (0.7cm);
     \fill[gray] ([c]Gn) circle (3pt);
          \node at ([c]Gn) [xshift=.3cm,yshift=.2cm] {\begin{color}{gray}\scriptsize{$f(E_2)$}\end{color}};

    \end{scope}   

     \node at (5,1) [below] {\scriptsize{${}^2\pazocal E^{(2)}$}};
      \node at (12.3,3) [above] {$\cap$};

   \draw (7.5,1)--(8.5,-.5);
\draw (7.5,1)--(19,-.5);


    \coordinate (A) at (2,4);
 \coordinate (B) at (1.7,3.8);
 \coordinate (C) at   (2,3.6);
 \coordinate (D) at (2.3,3.4) ;
  \coordinate (E) at (2,3.2) ;
  \coordinate (F) at (1.7,3);
  \coordinate (G) at  (2,2.8);
  \coordinate (H1) at (2.3,4.3);
 \coordinate (H2) at (2.4,4.8);
  \coordinate (J1) at (2.3,2.4);
 \coordinate (J2) at (2.4,2);

\begin{scope}[every coordinate/.style={shift={(14,0)}}]
\draw[thick] ([c]A) .. controls ([c]B) ..
       ([c]C) .. controls ([c]D)  .. ([c]E) .. controls ([c]F).. ([c]G);
      \draw[thick]  plot [smooth, tension=3] coordinates { ([c]A) ([c]H1) ([c]H2)};
       \draw[thick]  plot [smooth, tension=3] coordinates { ([c]G) ([c]J1) ([c]J2)};
       \draw[->,thick] ([c]Fs) to ([c]Ft);
        \node at ([c]Flabel) [above] {\scriptsize{$f$}};
       \end{scope}

  \begin{scope}[every coordinate/.style={shift={(14,0)}}]     
 \draw ([c]Q1)-- ( [c]Q2)--([c]Q3)--([c]Q4)--([c]Q1);   
 \end{scope}
 

 \coordinate (Am) at (0,0);
 \coordinate (Bm) at(0.2,-.3);
 \coordinate (Cm) at  ( .4,0);
 \coordinate (Dm) at (.6,.3)  ;
  \coordinate (Em) at (.8,0) ;
  \coordinate (Fm) at (1,-.3);
  \coordinate (Gm) at  (1.2,0);
  \coordinate (H1m) at (-.5,.3);
 \coordinate (H2m) at (-.2,.2);
\coordinate (J1m) at (1.4,.2);
 \coordinate (J2m) at (1.7,.3);
 \coordinate (N1m) at (3.2,-.3);
 \coordinate (N2m) at (.2,-.3);
  \coordinate (N3m) at (3,.3);

 \begin{scope}[every coordinate/.style={shift={(20,3)}}]
 
 \draw[thick]  plot [smooth, tension=3] coordinates {([c]H1m) ([c]H2m) ([c]Am)  };
 
\draw[thick] ([c]Am) .. controls ([c]Bm) ..
       ([c]Cm) .. controls ([c]Dm)  .. ([c]Em) .. controls ([c]Fm).. ([c]Gm);
    \draw[thick]  plot [smooth, tension=3] coordinates { ([c]Gm) ([c]J1m) ([c]J2m)  };   
     \draw[thick]   ([c]J2m).. controls ([c]N1m) and ([c]N2m).. ([c]N3m);
 \draw[dashed] ([c]P1)-- ( [c]P2)--([c]P3)--([c]P4)--([c]P1);  
    \end{scope}



 
  \coordinate (Je1) at (2.9,4.2);
  \coordinate (Je2) at (2.3,4.8);
  \coordinate (Y2) at (2.8,5);
   \coordinate (Y1) at (2.8,3);
 
 \begin{scope}[every coordinate/.style={shift={(3,-6)}}]

 \draw ([c]Q1)-- ( [c]Q2)--([c]Q3long)--([c]Q4long)--([c]Q1);   
 \end{scope}

 \begin{scope}[every coordinate/.style={shift={(3,-6)}}]
\draw[draw=gray] ([c]AE1) .. controls ([c]BE1) ..
       ([c]CE1);
     \draw[thick] ([c]EE2)  .. controls ([c]FE2) .. ([c]GE2);
     
      \draw[draw=gray]  plot [smooth, tension=3] coordinates { ([c]AE1) ([c]H1) ([c]H2)};
       \draw[draw=gray]  plot [smooth, tension=3] coordinates { ([c]CE1) ([c]HE1) ([c]HE2)};
       
        \draw[thick]  plot [smooth, tension=3] coordinates { ([c]EE2) ([c]DE2) ([c]JE1)};
       \draw[thick]  plot [smooth, tension=3] coordinates { ([c]GE2) ([c]J1) ([c]J2)};

       \draw[->,thick] ([c]Fs) to ([c]Ft);
        \node at ([c]Flabel) [above] {\scriptsize{$f$}};
             \node at ([c]J2) [left] {\scriptsize{$E_1$}};
         \node at ([c]H2) [xshift=.2cm,yshift=-.1cm] {\begin{color}{gray}\scriptsize{$E_2$}\end{color}};
         
          \draw[thick] ([c]T2)  -- ([c]T1);
         
                          \draw[->,dotted] ([c]fact1) to ([c]fact2);

                        \draw[->,thick] ([c]Fbs) to ([c]Fbt);         
 \node at ([c]Fblabel) [right] {\scriptsize{$\bar{f}$}};
\node at ([c]Qlongmiddle) [below] {\scriptsize{${}^2\pazocal E^{(2)}\cap\overline{\pazocal M}^{\text{main}}$}};
 \end{scope}
 
\begin{scope}[every coordinate/.style={shift={(4.7,-10)}}]

  \draw[thick] ([c]EE2)  .. controls ([c]FE2) .. ([c]GE2);
     
      \draw[thick]  plot [smooth, tension=3] coordinates { ([c]EE2) ([c]DE2)};
       \draw[thick]  plot [smooth, tension=3] coordinates { ([c]GE2) ([c]J1) ([c]J2)};

        \draw[thick] ([c]Je2)  .. controls ([c]Je1) .. ([c]DE2);
        \draw[thick] ([c]Y2) -- ([c]Y1);
       
\end{scope}

        \coordinate (f2n) at (-.2,-.6);
     \coordinate (f1n) at  (2.2,-.6);
     \coordinate (gn) at  (1,-.6);

 \begin{scope}[every coordinate/.style={shift={(9,-3)}}]

   \draw[thick]  ([c]f2n) -- ([c]f1n);
 \draw[dashed] ([c]P1)-- ( [c]P2)--([c]P3)--([c]P4)--([c]P1);  
    
     \draw[thick]([c]An) circle (0.7cm);
     \fill[gray] ([c]gn) circle (3pt);
     \fill[blue] ([c]gn)[xshift=-1cm] circle (3pt) ([c]gn)[xshift=.8cm] circle (3pt) ([c]gn)[xshift=-.5cm] circle (3pt) ([c]gn)[xshift=.3cm] circle (3pt);
          \node at ([c]gn) [below] {\begin{color}{gray}\scriptsize{$f(E_2)$}\end{color}};
          \node at ([c]gn) [yshift=.2cm] {\scriptsize{$A_3$}};

    \end{scope}




  \coordinate (Z2) at (2.6,4.7);
   \coordinate (Z1) at (4.1,5);
 
 \begin{scope}[every coordinate/.style={shift={(14,-6)}}]

 \draw ([c]Q1)-- ( [c]Q2)--([c]Q3long)--([c]Q4long)--([c]Q1);   
 \end{scope}

 \begin{scope}[every coordinate/.style={shift={(14,-6)}}]
\draw[draw=gray] ([c]AE1) .. controls ([c]BE1) ..
       ([c]CE1);
     \draw[thick] ([c]EE2)  .. controls ([c]FE2) .. ([c]GE2);
     
      \draw[draw=gray]  plot [smooth, tension=3] coordinates { ([c]AE1) ([c]H1) ([c]H2)};
       \draw[draw=gray]  plot [smooth, tension=3] coordinates { ([c]CE1) ([c]HE1) ([c]HE2)};
       
        \draw[thick]  plot [smooth, tension=3] coordinates { ([c]EE2) ([c]DE2) ([c]JE1)};
       \draw[thick]  plot [smooth, tension=3] coordinates { ([c]GE2) ([c]J1) ([c]J2)};

       \draw[->,thick] ([c]Fs) to ([c]Ft);
        \node at ([c]Flabel) [above] {\scriptsize{$f$}};
             \node at ([c]J2) [left] {\scriptsize{$E_1$}};
         \node at ([c]H2) [xshift=.2cm,yshift=-.1cm] {\begin{color}{gray}\scriptsize{$E_2$}\end{color}};
         
          \draw[thick] ([c]T2)  -- ([c]T1);
         
                          \draw[->,dotted] ([c]fact1) to ([c]fact2);

                        \draw[->,thick] ([c]Fbs) to ([c]Fbt);         
 \node at ([c]Fblabel) [right] {\scriptsize{$\bar{f}$}};
\node at ([c]Qlongmiddle) [below] {\scriptsize{${}^2\pazocal E^{(2)}\cap\overline{\pazocal M}^{\text{main}}$ (sprouting)}};
 \end{scope}
 
\begin{scope}[every coordinate/.style={shift={(15.7,-10)}}]

  \draw[thick] ([c]EE2)  .. controls ([c]FE2) .. ([c]GE2);
     
      \draw[thick]  plot [smooth, tension=3] coordinates { ([c]EE2) ([c]DE2)};
       \draw[thick]  plot [smooth, tension=3] coordinates { ([c]GE2) ([c]J1) ([c]J2)};

        \draw[thick] ([c]Je2)  .. controls ([c]Je1) .. ([c]DE2);
        \draw[dashed] ([c]Y2) -- ([c]Y1);
         \draw[thick] ([c]Z2) -- ([c]Z1);
       
\end{scope}

 \begin{scope}[every coordinate/.style={shift={(20,-3)}}]

   \draw[thick]  ([c]F2n) -- ([c]F1n);
 \draw[dashed] ([c]P1)-- ( [c]P2)--([c]P3)--([c]P4)--([c]P1);  
    
     \draw[thick]([c]An) circle (0.7cm);
     \fill[blue] ([c]Gn) circle (3pt);
     \fill[blue] ([c]Gn)[xshift=-1cm] circle (3pt) ([c]Gn)[xshift=-1.6cm] circle (3pt) ([c]Gn)[xshift=-.5cm] circle (3pt);
          \node at ([c]Gn) [xshift=.3cm,yshift=.2cm] {\begin{color}{gray}\scriptsize{$f(E_2)$}\end{color}};

    \end{scope}

\end{tikzpicture}
\caption{${}^2\pazocal E^{(2)},\dim=13$; ${}^2\pazocal E^{(2)}\cap\overline{\pazocal M}^{\text{main}},\dim=12$}
\label{2E2}	
\end{figure}
 
 \item[${}^2\!\pazocal{E}^{(1^2)}$ component] 

 This is the locus generically parametrising maps $f\colon E_1\cup E_2\cup T_1\cup T_2\to\mathbb P^2$ where $E_1\cup E_2$ is, as in the example above, a reducible core given by two nodally attached elliptic curves, with $f$ contracting $E_2$ and with degree $2$ on $E_1$, and $T_i$ rational tails attached to $E_2$ on which the map has degree $1.$
  This locus has dimension $12$ and
 it is  actually contained in \emph{main}, since the image determines the elliptic $3$-fold point through which the map factors.

 \item[$\pazocal{E}^4\!\pazocal{E}$ component] The general image is a three-nodal quartic, and the two elliptic curves are contracted anywhere. The dimension is $15$. The intersection with main one component:
 a map $f$ whose weighted dual graph is generic for this component is smoothable if and only if the elliptic curves contracted to cusps, and the image is of type $A_1-A_2^2$.  Note that $\pazocal{V\!Z}_2(\PP^2,4)$ has general fibre $\PP^1$ over this locus, corresponding to the comparison of the height (i.e. the values taken by $\lambda$) of the two elliptic curves.  The factorisation property does not depend on it. See Figure \ref{E4E}.
The dimension is $11.$

\begin{figure}[h]
   \begin{tikzpicture}[scale=.4]

\coordinate (Hh1) at (2,4);
 \coordinate (Hh2) at (2.1,4.5);
 
 \coordinate (AE1) at (1.7,3.7);
 \coordinate (BE1) at (1.4,3.5);
 \coordinate (CE1) at   (1.7,3.3);
 \coordinate (HE1) at (2,3);
 \coordinate (HE2) at (2.1,2.5);

\coordinate (Jj1) at (2.5,4);
 \coordinate (Jj2) at (2.6,4.5);
 
 \coordinate (AE2) at (2.9,3.7);
 \coordinate (BE2) at (3.2,3.5);
 \coordinate (CE2) at   (2.9,3.3);
 \coordinate (JE1) at (2.6,3);
 \coordinate (JE2) at (2.5,2.5);

 \coordinate (T1) at (1.2,3);
 \coordinate (T2) at (3,3);

   \coordinate(fact1) at (2.4,1.2);
   \coordinate(fact2) at (3.1,-.2);
   
 \coordinate (Fs) at (3.1,3.2);
  \coordinate (Ft) at (4.5,3.2);
   \coordinate (Flabel) at (3.8,3.5);
   
    \coordinate (Fbs) at (5.3,-.3);
  \coordinate (Fbt) at (6.4,1);
    \coordinate (Fblabel) at (5.9,.1);

   \begin{scope}[every coordinate/.style={shift={(0,0)}}]
\draw[draw=gray] ([c]AE1) .. controls ([c]BE1) ..
       ([c]CE1);
       \draw[draw=gray]  plot [smooth, tension=3] coordinates { ([c]AE1) ([c]Hh1) ([c]Hh2)};
       \draw[draw=gray]  plot [smooth, tension=3] coordinates { ([c]CE1) ([c]HE1) ([c]HE2)};

       \draw[draw=gray] ([c]AE2) .. controls ([c]BE2) ..
       ([c]CE2);
       \draw[draw=gray]  plot [smooth, tension=3] coordinates { ([c]AE2) ([c]Jj1) ([c]Jj2)};
       \draw[draw=gray]  plot [smooth, tension=3] coordinates { ([c]CE2) ([c]JE1) ([c]JE2)};

        \draw[thick] ([c]T2) -- node[yshift=-.2cm]{\scriptsize{$4$}}([c]T1);
       
       \draw[->,thick] ([c]Fs) to ([c]Ft);
        \node at ([c]Flabel) [above] {\scriptsize{$f$}};
             \node at ([c]AE1) [xshift=-.1cm,yshift=.4cm] {\begin{color}{gray}\scriptsize{$E_1$}\end{color}};
         \node at ([c]AE2) [xshift=.1cm,yshift=.4cm] {\begin{color}{gray}\scriptsize{$E_2$}\end{color}};

 \end{scope}

 \coordinate(Q1) at  (.5,5.5);
 \coordinate (Q2) at (10,5.5);
 \coordinate (Q3) at (10,1);
 \coordinate (Q4) at (.5,1);
 \coordinate (Qmiddle) at (5.25,1);
 \coordinate (Q3long) at (10,-2.5 );
 \coordinate (Q4long) at (.5,-2.5);
\coordinate (Qlongmiddle) at (5.5,-2.5);
 
 \coordinate(P1) at  (-0.2,1);
 \coordinate (P2) at ( 3.5,1);
 \coordinate (P3) at (2.5,-1 );
 \coordinate (P4) at (-1.2,-1);

  \begin{scope}[every coordinate/.style={shift={(0,0)}}]     
 \draw (Q1)-- ( Q2)--(Q3)--(Q4)--(Q1);   
 \end{scope}
 

 \coordinate (An) at (-.5,.5);
 \coordinate (B1m) at(2,-.2);
  \coordinate (B2m) at(-1.8,-.2);
 \coordinate (Cn) at  (1,.5);

 \coordinate (D1n) at (3,-.2)  ;
  \coordinate (D2n) at (-.5,-.2) ;
  \coordinate (En) at (2,.5);
  \coordinate (F1n) at  (4,-.2);
   \coordinate (F2n) at (1,-.2);
   \coordinate (Gn) at (3,.5);

 \coordinate(P1) at  (-0.2,1);
 \coordinate (P2) at ( 3.5,1);
 \coordinate (P3) at (2.5,-1 );
 \coordinate (P4) at (-1.2,-1);
 
 \begin{scope}[every coordinate/.style={shift={(6,3)}}]

\draw[thick] ([c]An) .. controls ([c]B1m) and ([c]B2m) ..
       ([c]Cn) .. controls ([c]D1n) and ([c]D2n)  .. ([c]En) .. controls ([c]F1n) and ([c]F2n)  .. ([c]Gn);
   
 \draw[dashed] ([c]P1)-- ( [c]P2)--([c]P3)--([c]P4)--([c]P1);  
    
     \fill[gray] ([c]Cn) circle (3pt);
          \node at ([c]Cn) [yshift=.15cm,xshift=-.2cm] {\begin{color}{gray}\scriptsize{$f(E_1)$}\end{color}};
           \fill[gray] ([c]En) circle (3pt);
          \node at ([c]En) [yshift=.15cm,xshift=.2cm] {\begin{color}{gray}\scriptsize{$f(E_2)$}\end{color}};

    \end{scope}   

     \node at (5,1) [below] {\scriptsize{$\pazocal E^4\pazocal E$}};


    \coordinate (A) at (2,4);
 \coordinate (B) at (1.7,3.8);
 \coordinate (C) at   (2,3.6);
 \coordinate (D) at (2.3,3.4) ;
  \coordinate (E) at (2,3.2) ;
  \coordinate (F) at (1.7,3);
  \coordinate (G) at  (2,2.8);
  \coordinate (H1) at (2.3,4.3);
 \coordinate (H2) at (2.4,4.8);
  \coordinate (J1) at (2.3,2.4);
 \coordinate (J2) at (2.4,2);

\begin{scope}[every coordinate/.style={shift={(14,0)}}]
\draw[thick] ([c]A) .. controls ([c]B) ..
       ([c]C) .. controls ([c]D)  .. ([c]E) .. controls ([c]F).. ([c]G);
      \draw[thick]  plot [smooth, tension=3] coordinates { ([c]A) ([c]H1) ([c]H2)};
       \draw[thick]  plot [smooth, tension=3] coordinates { ([c]G) ([c]J1) ([c]J2)};
       \draw[->,thick] ([c]Fs) to ([c]Ft);
        \node at ([c]Flabel) [above] {\scriptsize{$f$}};
       \end{scope}

  \begin{scope}[every coordinate/.style={shift={(14,0)}}]     
 \draw ([c]Q1)-- ( [c]Q2)--([c]Q3)--([c]Q4)--([c]Q1);   
 \end{scope}
 

 \coordinate (Am) at (0,0);
 \coordinate (Bm) at(0.2,-.3);
 \coordinate (Cm) at  ( .4,0);
 \coordinate (Dm) at (.6,.3)  ;
  \coordinate (Em) at (.8,0) ;
  \coordinate (Fm) at (1,-.3);
  \coordinate (Gm) at  (1.2,0);
  \coordinate (H1m) at (-.5,.3);
 \coordinate (H2m) at (-.2,.2);
\coordinate (J1m) at (1.4,.2);
 \coordinate (J2m) at (1.7,.3);
 \coordinate (N1m) at (3.2,-.3);
 \coordinate (N2m) at (.2,-.3);
  \coordinate (N3m) at (3,.3);

 \begin{scope}[every coordinate/.style={shift={(20,3)}}]
 
 \draw[thick]  plot [smooth, tension=3] coordinates {([c]H1m) ([c]H2m) ([c]Am)  };
 
\draw[thick] ([c]Am) .. controls ([c]Bm) ..
       ([c]Cm) .. controls ([c]Dm)  .. ([c]Em) .. controls ([c]Fm).. ([c]Gm);
    \draw[thick]  plot [smooth, tension=3] coordinates { ([c]Gm) ([c]J1m) ([c]J2m)  };   
     \draw[thick]   ([c]J2m).. controls ([c]N1m) and ([c]N2m).. ([c]N3m);
 \draw[dashed] ([c]P1)-- ( [c]P2)--([c]P3)--([c]P4)--([c]P1);  
    \end{scope}

\draw (7.5,1)--(11,-.5);
\draw (19.5,1)--(16.,-.5);

 \node at (12.3,3) [above] { $\cap$};

  
 \begin{scope}[every coordinate/.style={shift={(8,-6)}}]

 \draw ([c]Q1)-- ( [c]Q2)--([c]Q3long)--([c]Q4long)--([c]Q1);   
 \end{scope}

 \begin{scope}[every coordinate/.style={shift={(8,-6)}}]

\draw[draw=gray] ([c]AE1) .. controls ([c]BE1) ..
       ([c]CE1);
       \draw[draw=gray]  plot [smooth, tension=3] coordinates { ([c]AE1) ([c]Hh1) ([c]Hh2)};
       \draw[draw=gray]  plot [smooth, tension=3] coordinates { ([c]CE1) ([c]HE1) ([c]HE2)};

       \draw[draw=gray] ([c]AE2) .. controls ([c]BE2) ..
       ([c]CE2);
       \draw[draw=gray]  plot [smooth, tension=3] coordinates { ([c]AE2) ([c]Jj1) ([c]Jj2)};
       \draw[draw=gray]  plot [smooth, tension=3] coordinates { ([c]CE2) ([c]JE1) ([c]JE2)};

        \draw[thick] ([c]T2) -- ([c]T1);

        \draw[thick] ([c]T2) -- ([c]T1);
       
       \draw[->,thick] ([c]Fs) to ([c]Ft);
        \node at ([c]Flabel) [above] {\scriptsize{$f$}};
       
\draw[->,dotted] ([c]fact1) to ([c]fact2);
                          
     \draw[->,thick] ([c]Fbs) to ([c]Fbt);         
 \node at ([c]Fblabel) [right] {\scriptsize{$\bar{f}$}};
\node at ([c]Qlongmiddle) [below] {\scriptsize{$\pazocal E^4\pazocal E\cap\overline{\pazocal M}^{\text{main}}$}};
 \end{scope}
 
 
 \coordinate (DS) at (2.3,3.35) ;

\begin{scope}[every coordinate/.style={shift={(10.5,-9.5)}}]

\draw[thick] ([c]B) parabola ([c]H2);
   \draw[thick]   ([c]B) parabola  ([c]D);
      \draw[thick] ([c]F)  parabola ([c]DS);
        \draw[thick] ([c]F)  parabola ([c]J2);
    \draw[thick]  plot [smooth, tension=3] coordinates { ([c]DS) ([c]D)};
   
\end{scope}


  \coordinate (GSm) at  (1.2,.2);
  
\coordinate (J1Sm) at (1.4,.4);
 \coordinate (J2Sm) at (1.7,.5);
 
  \coordinate (N3Sm) at (2.7,.3);

 \coordinate (DSm) at( 0.65,.3);

 \begin{scope}[every coordinate/.style={shift={(14,-3)}}]
 
 \draw[thick] ([c]H1m) parabola ([c]Bm);
  \draw[thick] ([c]Dm) parabola ([c]Bm);
  
 \draw[thick]  plot [smooth, tension=3] coordinates {([c]Dm)  ([c]DSm)  };
 \node at ([c]DSm) [xshift=-.3cm,yshift=-.4cm] {\begin{color}{gray}\scriptsize{$f(E_1)$}\end{color}};
 \node at ([c]DSm) [xshift=.4cm,yshift=-.4cm] {\begin{color}{gray}\scriptsize{$f(E_2)$}\end{color}};
  \draw[thick] ([c]DSm) parabola ([c]Fm);
   \draw[thick] ([c]GSm) parabola ([c]Fm);
 
    \draw[thick]  plot [smooth, tension=3] coordinates { ([c]GSm) ([c]J1Sm) ([c]J2Sm)  };   
     \draw[thick]   ([c]J2Sm).. controls ([c]N1m) and ([c]N2m).. ([c]N3Sm);
 \draw[dashed] ([c]P1)-- ( [c]P2)--([c]P3)--([c]P4)--([c]P1);  
    \end{scope}

\end{tikzpicture}
\caption{$\pazocal E^4\!\pazocal E,\dim=15$; $\pazocal E^4\!\pazocal E\cap\overline{\pazocal M}^{\text{main}},\dim=11$}
\label{E4E}	
\end{figure}

 \item[$\pazocal{E}^3\!\pazocal{E}^{(1)}$ component] In general, the image is a one-nodal cubic with a general line. The dimension is $14$. A map is smoothable if its image consists of a cuspidal cubic with a tangent line, type $A_1-A_2-A_3$. The dimension is $11$. See Figure \ref{E3E1}.

\begin{figure}[h]
   \begin{tikzpicture}[scale=.4]

\coordinate (Hh1) at (2,4);
 \coordinate (Hh2) at (2.1,4.5);
 
 \coordinate (AE1) at (1.7,3.7);
 \coordinate (BE1) at (1.4,3.5);
 \coordinate (CE1) at   (1.7,3.3);
 \coordinate (HE1) at (2,3);
 \coordinate (HE2) at (2.1,2.5);

\coordinate (Jj1) at (2.5,4);
 \coordinate (Jj2) at (2.6,4.5);
 
 \coordinate (AE2) at (2.9,3.7);
 \coordinate (BE2) at (3.2,3.5);
 \coordinate (CE2) at   (2.9,3.3);
 \coordinate (JE1) at (2.6,3);
 \coordinate (JE2) at (2.5,2.5);

 \coordinate (T1) at (1.7,3);
 \coordinate (T2) at (3,3);
 \coordinate (R1) at (2.6,3.5);
 \coordinate (R2) at (3.6,4.5);

   \coordinate(fact1) at (2.4,1.2);
   \coordinate(fact2) at (3.1,-.2);
   
 \coordinate (Fs) at (3.1,3.2);
  \coordinate (Ft) at (4.5,3.2);
   \coordinate (Flabel) at (3.8,3.5);
   
    \coordinate (Fbs) at (5.3,-.3);
  \coordinate (Fbt) at (6.4,1);
    \coordinate (Fblabel) at (5.9,.1);

   \begin{scope}[every coordinate/.style={shift={(0,0)}}]
\draw[draw=gray] ([c]AE1) .. controls ([c]BE1) .. ([c]CE1);
       \draw[draw=gray]  plot [smooth, tension=3] coordinates { ([c]AE1) ([c]Hh1) ([c]Hh2)};
       \draw[draw=gray]  plot [smooth, tension=3] coordinates { ([c]CE1) ([c]HE1) ([c]HE2)};

       \draw[draw=gray] ([c]AE2) .. controls ([c]BE2) ..
       ([c]CE2);
       \draw[draw=gray]  plot [smooth, tension=3] coordinates { ([c]AE2) ([c]Jj1) ([c]Jj2)};
       \draw[draw=gray]  plot [smooth, tension=3] coordinates { ([c]CE2) ([c]JE1) ([c]JE2)};

        \draw[thick] ([c]T2) -- ([c]T1);
        \draw[thick] ([c]R1) -- ([c]R2);
       
       \draw[->,thick] ([c]Fs) to ([c]Ft);
        \node at ([c]Flabel) [above] {\scriptsize{$f$}};
           \node at ([c]CE1) [xshift=-.1cm,yshift=.4cm] {\begin{color}{gray}\scriptsize{$E_1$}\end{color}};
         \node at ([c]JE2) [xshift=.2cm] {\begin{color}{gray}\scriptsize{$E_2$}\end{color}};

  \node at ([c]T1) [below] {\scriptsize{$3$}};
    \node at ([c]R2) [left] {\scriptsize{$1$}};

 \end{scope}

 \coordinate(Q1) at  (1,5);
 \coordinate (Q2) at (10,5);
 \coordinate (Q3) at (10,1 );
 \coordinate (Q4) at (1,1);
 \coordinate (Q3long) at (10,-2.5 );
 \coordinate (Q4long) at (1,-2.5);
\coordinate (Qlongmiddle) at (5.5,-2.5);

 \coordinate(P1) at  (-0.2,1);
 \coordinate (P2) at ( 3.5,1);
 \coordinate (P3) at (2.5,-1 );
 \coordinate (P4) at (-1.2,-1);

  \begin{scope}[every coordinate/.style={shift={(0,0)}}]     
 \draw (Q1)-- ( Q2)--(Q3)--(Q4)--(Q1);   
 \end{scope}
 

  \coordinate (G1) at (0,.5);
 \coordinate (G2) at ( 4,-.6);
  \coordinate (G3) at (-1.5,-.6);
 \coordinate (G4) at  (1.8,.5);
 \coordinate (G5) at (-.5,-.1) ;
  \coordinate (G6) at (2.5,-.1) ;
   \coordinate (G7) at (.7,-.1) ;
  \coordinate (G8) at  (2.3,.5);
  \coordinate (G9) at  (2.1,-.4);
  
   \begin{scope}[every coordinate/.style={shift={(6,3)}}]
\draw[thick] ([c]G1) .. controls ([c]G2) and ([c]G3) ..
       ([c]G4) .. controls ([c]G8).. ([c]G9);
       \draw[thick] ([c]G5)--([c]G6);
   
 \draw[dashed] ([c]P1)-- ( [c]P2)--([c]P3)--([c]P4)--([c]P1);  
    
     \fill[gray] ([c]G7) circle (3pt);
          \node at ([c]G7) [below] {\begin{color}{gray}\scriptsize{$f(E_2)$}\end{color}};
\fill[gray] (8,3.5) circle (3pt);
          \node at (8,3.5) [yshift=.2cm] {\begin{color}{gray}\scriptsize{$f(E_1)$}\end{color}};
    \end{scope}

 \node at (5,1) [below] {\scriptsize{$\pazocal E^3\pazocal E^{(1)}$}};


    \coordinate (A) at (2,4);
 \coordinate (B) at (1.7,3.8);
 \coordinate (C) at   (2,3.6);
 \coordinate (D) at (2.3,3.4) ;
  \coordinate (E) at (2,3.2) ;
  \coordinate (F) at (1.7,3);
  \coordinate (G) at  (2,2.8);
  \coordinate (H1) at (2.3,4.3);
 \coordinate (H2) at (2.4,4.8);
  \coordinate (J1) at (2.3,2.4);
 \coordinate (J2) at (2.4,2);

\begin{scope}[every coordinate/.style={shift={(14,0)}}]
\draw[thick] ([c]A) .. controls ([c]B) ..
       ([c]C) .. controls ([c]D)  .. ([c]E) .. controls ([c]F).. ([c]G);
      \draw[thick]  plot [smooth, tension=3] coordinates { ([c]A) ([c]H1) ([c]H2)};
       \draw[thick]  plot [smooth, tension=3] coordinates { ([c]G) ([c]J1) ([c]J2)};
       \draw[->,thick] ([c]Fs) to ([c]Ft);
        \node at ([c]Flabel) [above] {\scriptsize{$f$}};
       \end{scope}

  \begin{scope}[every coordinate/.style={shift={(14,0)}}]     
 \draw ([c]Q1)-- ( [c]Q2)--([c]Q3)--([c]Q4)--([c]Q1);   
 \end{scope}
 

 \coordinate (Am) at (0,0);
 \coordinate (Bm) at(0.2,-.3);
 \coordinate (Cm) at  ( .4,0);
 \coordinate (Dm) at (.6,.3)  ;
  \coordinate (Em) at (.8,0) ;
  \coordinate (Fm) at (1,-.3);
  \coordinate (Gm) at  (1.2,0);
  \coordinate (H1m) at (-.5,.3);
 \coordinate (H2m) at (-.2,.2);
\coordinate (J1m) at (1.4,.2);
 \coordinate (J2m) at (1.7,.3);
 \coordinate (N1m) at (3.2,-.3);
 \coordinate (N2m) at (.2,-.3);
  \coordinate (N3m) at (3,.3);

 \begin{scope}[every coordinate/.style={shift={(20,3)}}]
 
 \draw[thick]  plot [smooth, tension=3] coordinates {([c]H1m) ([c]H2m) ([c]Am)  };
 
\draw[thick] ([c]Am) .. controls ([c]Bm) ..
       ([c]Cm) .. controls ([c]Dm)  .. ([c]Em) .. controls ([c]Fm).. ([c]Gm);
    \draw[thick]  plot [smooth, tension=3] coordinates { ([c]Gm) ([c]J1m) ([c]J2m)  };   
     \draw[thick]   ([c]J2m).. controls ([c]N1m) and ([c]N2m).. ([c]N3m);
 \draw[dashed] ([c]P1)-- ( [c]P2)--([c]P3)--([c]P4)--([c]P1);  
    \end{scope}

\draw (7.5,1)--(13.,-1);
\draw (19.5,1)--(16.,-1);

 \node at (12.3,3) [above] { $\cap$};


\begin{scope}[every coordinate/.style={shift={(8,-6)}}]

 \draw ([c]Q1)-- ( [c]Q2)--([c]Q3long)--([c]Q4long)--([c]Q1);   
 \end{scope}

\begin{scope}[every coordinate/.style={shift={(8,-6)}}]

\draw[draw=gray] ([c]AE1) .. controls ([c]BE1) ..
       ([c]CE1);
       \draw[draw=gray]  plot [smooth, tension=3] coordinates { ([c]AE1) ([c]Hh1) ([c]Hh2)};
       \draw[draw=gray]  plot [smooth, tension=3] coordinates { ([c]CE1) ([c]HE1) ([c]HE2)};

       \draw[draw=gray] ([c]AE2) .. controls ([c]BE2) ..
       ([c]CE2);
       \draw[draw=gray]  plot [smooth, tension=3] coordinates { ([c]AE2) ([c]Jj1) ([c]Jj2)};
       \draw[draw=gray]  plot [smooth, tension=3] coordinates { ([c]CE2) ([c]JE1) ([c]JE2)};

        \draw[thick] ([c]T2) -- ([c]T1);

        \draw[thick] ([c]T2) -- ([c]T1);
       
       \draw[->,thick] ([c]Fs) to ([c]Ft);
        \node at ([c]Flabel) [above] {\scriptsize{$f$}};
       
\draw[->,dotted] ([c]fact1) to ([c]fact2);
                          
     \draw[->,thick] ([c]Fbs) to ([c]Fbt);         
 \node at ([c]Fblabel) [right] {\scriptsize{$\bar{f}$}};
\node at ([c]Qlongmiddle) [below] {\scriptsize{$\pazocal E^3\pazocal E^{(1)}\cap\overline{\pazocal M}^{\text{main}}$}};
 \end{scope}
 
 
 \coordinate (DS) at (2.3,3.35) ;
 
 \coordinate (ES) at (2.6,2.8) ;
 \coordinate (FS) at (1.5,2) ;
 \coordinate (S1) at (2.52,4) ;
 \coordinate (S2) at (2.52,1.8) ;

\begin{scope}[every coordinate/.style={shift={(10.1,-9.5)}}]

\draw[thick] ([c]B) parabola ([c]H2);
   \draw[thick]   ([c]B) parabola  ([c]DS);
      \draw[thick] ([c]DS) .. controls ([c]ES)..([c]FS);

\draw[thick] ([c]S1) -- ([c]S2);
   
\end{scope}

   
    \coordinate (BI) at(-.3,-.3);
    \coordinate (CI) at(.8,.7);
    \coordinate (DI) at(1.6,0);
      \coordinate (EI) at(1.8,0);
       \coordinate (FI) at(2.4,.7);
 \coordinate (L1) at(-.5,0);
 \coordinate (L2) at(2.5,0);
 \begin{scope}[every coordinate/.style={shift={(14,-3)}}]
 
 \draw[thick] ([c]BI) parabola ([c]CI);
  \draw[thick] ([c]DI) parabola ([c]CI);
  
 \draw[thick]  plot [smooth, tension=3] coordinates {([c]DI)  ([c]EI)  };
 
\draw[thick] ([c]EI) parabola ([c]FI);
   \draw[thick] ([c]L1) -- ([c]L2);

 \draw[dashed] ([c]P1)-- ( [c]P2)--([c]P3)--([c]P4)--([c]P1);  
 
      \fill[gray] ([c]CI) circle (3pt);
           \fill[gray] ([c]EI) circle (3pt);
 
   \node at ([c]CI) [left] {\begin{color}{gray}\scriptsize{$f(E_1)$}\end{color}};
         \node at ([c]EI) [below] {\begin{color}{gray}\scriptsize{$f(E_2)$}\end{color}};

\end{scope}

\end{tikzpicture}
\caption{$\pazocal E^3\!\pazocal E^{(1)},\dim=14$; $\pazocal E^3\!\pazocal E^{(1)}\cap\overline{\pazocal M}^{\text{main}},\dim=11$}
\label{E3E1}	
\end{figure}

 \item[$\pazocal{E}^2\!\pazocal{E}^{(2)}$ component] 
 The generic point of this component correspond to maps from a source curve $C=E_1\cup B\cup E_2\cup T$ where $E_1$ and $E_2$ are two elliptic curve contracted by the map and separated by a rational bridge $B$ on which the map has degree $2$, and $T$ is a rational tail attached to $E_2.$

 The image consists of two conics. The dimension is $14$. The intersection with \emph{main} consists of two components, both of dimension $11$: in both cases $E_1$ (which is the elliptic curve without rational tail attached) is replaced by a cusp, forcing the separating bridge to doubly cover a line, with one ramification point situated at the attaching of $E_1$. Then (see Figure \ref{E2E2}):
 \begin{itemize}[leftmargin=.5cm]
  \item either $E_2$ is replaced by a tacnode, in which case the second conic has to be tangent to the line;
  \item or there is a sprouting and the conic has to pass through the second ramification point of the two-to-one cover.
 \end{itemize}

\begin{figure}[h]
   \begin{tikzpicture}[scale=.4]

\coordinate (Hh1) at (2,4);
 \coordinate (Hh2) at (2.1,4.5);
 
 \coordinate (AE1) at (1.7,3.7);
 \coordinate (BE1) at (1.4,3.5);
 \coordinate (CE1) at   (1.7,3.3);
 \coordinate (HE1) at (2,3);
 \coordinate (HE2) at (2.1,2.5);

\coordinate (Jj1) at (2.5,4);
 \coordinate (Jj2) at (2.6,4.5);
 
 \coordinate (AE2) at (2.9,3.7);
 \coordinate (BE2) at (3.2,3.5);
 \coordinate (CE2) at   (2.9,3.3);
 \coordinate (JE1) at (2.6,3);
 \coordinate (JE2) at (2.5,2.5);

 \coordinate (T1) at (1.7,3);
 \coordinate (T2) at (3,3);
 \coordinate (R1) at (2.6,3.5);
 \coordinate (R2) at (3.6,4.5);

   \coordinate(fact1) at (2.4,1.2);
   \coordinate(fact2) at (3.1,-.2);
   
 \coordinate (Fs) at (3.1,3.2);
  \coordinate (Ft) at (4.5,3.2);
   \coordinate (Flabel) at (3.8,3.5);
   
    \coordinate (Fbs) at (5.3,-.3);
  \coordinate (Fbt) at (6.4,1);
    \coordinate (Fblabel) at (5.9,.1);

   \begin{scope}[every coordinate/.style={shift={(0,0)}}]
\draw[draw=gray] ([c]AE1) .. controls ([c]BE1) ..
       ([c]CE1);
       \draw[draw=gray]  plot [smooth, tension=3] coordinates { ([c]AE1) ([c]Hh1) ([c]Hh2)};
       \draw[draw=gray]  plot [smooth, tension=3] coordinates { ([c]CE1) ([c]HE1) ([c]HE2)};

       \draw[draw=gray] ([c]AE2) .. controls ([c]BE2) ..
       ([c]CE2);
       \draw[draw=gray]  plot [smooth, tension=3] coordinates { ([c]AE2) ([c]Jj1) ([c]Jj2)};
       \draw[draw=gray]  plot [smooth, tension=3] coordinates { ([c]CE2) ([c]JE1) ([c]JE2)};

        \draw[thick] ([c]T2) -- ([c]T1);
        \draw[thick] ([c]R1) -- ([c]R2);
       
       \draw[->,thick] ([c]Fs) to ([c]Ft);
        \node at ([c]Flabel) [above] {\scriptsize{$f$}};
           \node at ([c]CE1) [xshift=-.1cm,yshift=.4cm] {\begin{color}{gray}\scriptsize{$E_1$}\end{color}};
         \node at ([c]JE2) [xshift=.2cm] {\begin{color}{gray}\scriptsize{$E_2$}\end{color}};
  \node at ([c]T1) [below] {\scriptsize{$2$}};
    \node at ([c]R2) [left] {\scriptsize{$2$}};

 \end{scope}

 \coordinate(Q1) at  (1,5);
 \coordinate (Q2) at (10,5);
 \coordinate (Q3) at (10,1 );
 \coordinate (Q4) at (1,1);
 \coordinate (Q3long) at (10,-2.5 );
 \coordinate (Q4long) at (1,-2.5);
\coordinate (Qlongmiddle) at (5.5,-2.5);

 \coordinate(P1) at  (-0.2,1);
 \coordinate (P2) at ( 3.5,1);
 \coordinate (P3) at (2.5,-1 );
 \coordinate (P4) at (-1.2,-1);

  \begin{scope}[every coordinate/.style={shift={(0,0)}}]     
 \draw (Q1)-- ( Q2)--(Q3)--(Q4)--(Q1);   
 \end{scope}
 
  
  \coordinate (G1) at (1,0);
 \coordinate (G2) at (1.4,.45);
  
   \begin{scope}[every coordinate/.style={shift={(6,3)}}]
\draw[thick] ([c]G1)  ellipse (1.3cm and 0.5cm);
      \draw[thick] ([c]G1)  ellipse (.5cm and .9cm);

 \draw[dashed] ([c]P1)-- ( [c]P2)--([c]P3)--([c]P4)--([c]P1);  
    
     \fill[gray] ([c]G2) circle (3pt);
          \node at ([c]G2) [right] {\begin{color}{gray}\scriptsize{$f(E_2)$}\end{color}};
    \fill[gray] ([c]G2)[xshift=-.55cm,yshift=-1.3cm] circle (3pt);
      \node at ([c]G2)[xshift=-.5cm,yshift=-.8cm] {\begin{color}{gray}\scriptsize{$f(E_1)$}\end{color}};
    \end{scope}   
  
\node at (5,1) [below] {\scriptsize{$\pazocal E^2\pazocal E^{(2)}$}};

\begin{scope}[every coordinate/.style={shift={(14,0)}}]
\draw[thick] ([c]A) .. controls ([c]B) ..
       ([c]C) .. controls ([c]D)  .. ([c]E) .. controls ([c]F).. ([c]G);
      \draw[thick]  plot [smooth, tension=3] coordinates { ([c]A) ([c]H1) ([c]H2)};
       \draw[thick]  plot [smooth, tension=3] coordinates { ([c]G) ([c]J1) ([c]J2)};
       \draw[->,thick] ([c]Fs) to ([c]Ft);
        \node at ([c]Flabel) [above] {\scriptsize{$f$}};
       \end{scope}
 \coordinate(Q1) at  (1,5);
 \coordinate (Q2) at (10,5);
 \coordinate (Q3) at (10,1 );
 \coordinate (Q4) at (1,1);

  \begin{scope}[every coordinate/.style={shift={(14,0)}}]     
 \draw ([c]Q1)-- ( [c]Q2)--([c]Q3)--([c]Q4)--([c]Q1);   
 \end{scope}

 \coordinate (Am) at (0,0);
 \coordinate (Bm) at(0.2,-.3);
 \coordinate (Cm) at  ( .4,0);
 \coordinate (Dm) at (.6,.3)  ;
  \coordinate (Em) at (.8,0) ;
  \coordinate (Fm) at (1,-.3);
  \coordinate (Gm) at  (1.2,0);
  \coordinate (H1m) at (-.5,.3);
 \coordinate (H2m) at (-.2,.2);
\coordinate (J1m) at (1.4,.2);
 \coordinate (J2m) at (1.7,.3);
 \coordinate (N1m) at (3.2,-.3);
 \coordinate (N2m) at (.2,-.3);
  \coordinate (N3m) at (3,.3);
 \coordinate(P1) at  (-0.2,1);
 \coordinate (P2) at ( 3.5,1);
 \coordinate (P3) at (2.5,-1 );
 \coordinate (P4) at (-1.2,-1);
 
 \begin{scope}[every coordinate/.style={shift={(20,3)}}]
 \draw[thick]  plot [smooth, tension=3] coordinates {([c]H1m) ([c]H2m) ([c]Am)  };
\draw[thick] ([c]Am) .. controls ([c]Bm) ..
       ([c]Cm) .. controls ([c]Dm)  .. ([c]Em) .. controls ([c]Fm).. ([c]Gm);
    \draw[thick]  plot [smooth, tension=3] coordinates { ([c]Gm) ([c]J1m) ([c]J2m)  };   
     \draw[thick]   ([c]J2m).. controls ([c]N1m) and ([c]N2m).. ([c]N3m);
 \draw[dashed] ([c]P1)-- ( [c]P2)--([c]P3)--([c]P4)--([c]P1);  
    \end{scope}   
       
       
\draw (7.5,1)--(8.5,-1);
\draw (7.5,1)--(19,-1);

 \node at (12.3,3) [above] { $\cap$};


\begin{scope}[every coordinate/.style={shift={(3,-6)}}]     
 \draw ([c]Q1)-- ( [c]Q2)--([c]Q3long)--([c]Q4long)--([c]Q1);   
 \end{scope}

\begin{scope}[every coordinate/.style={shift={(3,-6)}}]

\draw[draw=gray] ([c]AE1) .. controls ([c]BE1) ..
       ([c]CE1);
       \draw[draw=gray]  plot [smooth, tension=3] coordinates { ([c]AE1) ([c]Hh1) ([c]Hh2)};
       \draw[draw=gray]  plot [smooth, tension=3] coordinates { ([c]CE1) ([c]HE1) ([c]HE2)};

       \draw[draw=gray] ([c]AE2) .. controls ([c]BE2) ..
       ([c]CE2);
       \draw[draw=gray]  plot [smooth, tension=3] coordinates { ([c]AE2) ([c]Jj1) ([c]Jj2)};
       \draw[draw=gray]  plot [smooth, tension=3] coordinates { ([c]CE2) ([c]JE1) ([c]JE2)};

        \draw[thick] ([c]T2) -- ([c]T1);

         \draw[thick] ([c]R1) -- ([c]R2);
       
       \draw[->,thick] ([c]Fs) to ([c]Ft);
        \node at ([c]Flabel) [above] {\scriptsize{$f$}};
       
\draw[->,dotted] ([c]fact1) to ([c]fact2);
                          
     \draw[->,thick] ([c]Fbs) to ([c]Fbt);         
 \node at ([c]Fblabel) [right] {\scriptsize{$\bar{f}$}};
\node at ([c]Qlongmiddle) [below] {\scriptsize{$\pazocal E^2\pazocal E^{(2)}\cap\overline{\pazocal M}^{\text{main}}$}};
 \end{scope}
 
 
 \coordinate (DS) at (2.3,3.35) ;
 
 \coordinate (ES) at (2.6,2.8) ;
 \coordinate (FS) at (1.5,2) ;
 \coordinate (S1) at (2.52,4) ;
 \coordinate (S2) at (2.52,1.8) ;

\begin{scope}[every coordinate/.style={shift={(5.1,-9.5)}}]

\draw[thick] ([c]B) parabola ([c]H2);
   \draw[thick]   ([c]B) parabola  ([c]DS);
      \draw[thick] ([c]DS) .. controls ([c]ES)..([c]FS);

\draw[thick] ([c]S1) -- ([c]S2);
   
\end{scope}


  \coordinate (GS1) at (-.5,-.5);
 \coordinate (GS2) at (2.4,.-.5);
  
   \begin{scope}[every coordinate/.style={shift={(9,-3)}}]
\draw[thick] ([c]G1)  ellipse (1cm and 0.5cm);
      \draw[thick] ([c]GS1) -- ([c]GS2);

 \draw[dashed] ([c]P1)-- ( [c]P2)--([c]P3)--([c]P4)--([c]P1);  
  
  \fill[gray] ([c]GS2)[xshift=-1.5cm] circle (3pt);
  \node at ([c]GS2) [xshift=-.5cm,yshift=-.2cm] {\begin{color}{gray}\scriptsize{$f(E_2)$}\end{color}};
  
  \fill[blue] ([c]GS2)[xshift=-2.5cm] circle (3pt);
  \node at ([c]GS2) [xshift=-1.2cm,yshift=-.2cm] {\begin{color}{gray}\scriptsize{$f(E_1)$}\end{color}};
  
  \fill[blue] ([c]GS2)[xshift=-.5cm] circle (3pt);

    \end{scope}


\begin{scope}[every coordinate/.style={shift={(14,-6)}}]     
 \draw ([c]Q1)-- ( [c]Q2)--([c]Q3long)--([c]Q4long)--([c]Q1);   
 \end{scope}

\begin{scope}[every coordinate/.style={shift={(14,-6)}}]

\draw[draw=gray] ([c]AE1) .. controls ([c]BE1) ..
       ([c]CE1);
       \draw[draw=gray]  plot [smooth, tension=3] coordinates { ([c]AE1) ([c]Hh1) ([c]Hh2)};
       \draw[draw=gray]  plot [smooth, tension=3] coordinates { ([c]CE1) ([c]HE1) ([c]HE2)};

       \draw[draw=gray] ([c]AE2) .. controls ([c]BE2) ..
       ([c]CE2);
       \draw[draw=gray]  plot [smooth, tension=3] coordinates { ([c]AE2) ([c]Jj1) ([c]Jj2)};
       \draw[draw=gray]  plot [smooth, tension=3] coordinates { ([c]CE2) ([c]JE1) ([c]JE2)};

        \draw[thick] ([c]T2) -- ([c]T1);

          \draw[thick] ([c]R1) -- ([c]R2);
       
       \draw[->,thick] ([c]Fs) to ([c]Ft);
        \node at ([c]Flabel) [above] {\scriptsize{$f$}};
       
\draw[->,dotted] ([c]fact1) to ([c]fact2);
                          
     \draw[->,thick] ([c]Fbs) to ([c]Fbt);         
 \node at ([c]Fblabel) [right] {\scriptsize{$\bar{f}$}};
\node at ([c]Qlongmiddle) [below] {\scriptsize{$\pazocal E^2\pazocal E^{(2)}\cap\overline{\pazocal M}^{\text{main}}$ (sprouting)}};
 \end{scope}
 
 
 \coordinate (DS) at (2.3,3.35) ;
 
 \coordinate (ES) at (2.6,2.8) ;
 \coordinate (FS) at (1.5,2) ;
 \coordinate (S1) at (2.52,4) ;
 \coordinate (S2) at (2.52,1.8) ;
 \coordinate (W1) at (3.52,2.7) ;
 \coordinate (W2) at (2.2,1.9) ;
 
\begin{scope}[every coordinate/.style={shift={(16.6,-9.5)}}]

\draw[thick] ([c]B) parabola ([c]H2);
   \draw[thick]   ([c]B) parabola  ([c]DS);
      \draw[thick] ([c]DS) .. controls ([c]ES)..([c]FS);

\draw[dashed] ([c]S1) -- ([c]S2);
\draw[thick] ([c]W1) -- ([c]W2);
   
\end{scope}


  
  \coordinate (Gs1) at (-.5,0);
 \coordinate (Gs2) at (2.4,0);
   \coordinate (Gs3) at (2,0);
   \begin{scope}[every coordinate/.style={shift={(20,-3)}}]
\draw[thick] ([c]G1)  ellipse (1cm and 0.5cm);
      \draw[thick] ([c]Gs1) -- ([c]Gs2);

 \draw[dashed] ([c]P1)-- ( [c]P2)--([c]P3)--([c]P4)--([c]P1);  

          \node at ([c]Gs2) [below] {\begin{color}{gray}\scriptsize{$f(E_2)$}\end{color}};
            \fill[blue] ([c]Gs3) circle (3pt);
            \node at ([c]Gs1) [xshift=.2cm,below] {\begin{color}{gray}\scriptsize{$f(E_1)$}\end{color}};
            \fill[blue] ([c]Gs1)[xshift=.2cm] circle (3pt);
    \end{scope}   

  \end{tikzpicture}
  \caption{$\pazocal E^2\!\pazocal E^{(2)},\dim=14$; $\pazocal E^2\!\pazocal E^{(2)}\cap\overline{\pazocal M}^{\text{main}},\dim=11$}
  \label{E2E2}
\end{figure}

 \item[$\pazocal{E}^2\!\pazocal{E}^{(1^2)}$ component] The image consists of a conic and two lines, all of them concurring in a point. The dimension is $13$. A map is smoothable if the separating bridge covers a line two-to-one, and $E_1$ is attached to one of the ramification points. This locus has dimension $11$. See Figure \ref{E2E11}.

\begin{figure}[h]
   \begin{tikzpicture}[scale=.4]

\coordinate (Hh1) at (2,4);
 \coordinate (Hh2) at (2.1,4.5);
 
 \coordinate (AE1) at (1.7,3.7);
 \coordinate (BE1) at (1.4,3.5);
 \coordinate (CE1) at   (1.7,3.3);
 \coordinate (HE1) at (2,3);
 \coordinate (HE2) at (2.1,2.5);

\coordinate (Jj1) at (2.5,4);
 \coordinate (Jj2) at (2.6,4.5);
 
 \coordinate (AE2) at (2.9,3.7);
 \coordinate (BE2) at (3.2,3.5);
 \coordinate (CE2) at   (2.9,3.3);
 \coordinate (JE1) at (2.6,3);
 \coordinate (JE2) at (2.5,2.5);

 \coordinate (T1) at (1.7,3);
 \coordinate (T2) at (3,3);
 \coordinate (R1) at (2.6,3.5);
 \coordinate (R2) at (3.6,4.5);
 \coordinate (R1b) at (2.3,3.8);
 \coordinate (R2b) at (3.6,4.8);
 
   \coordinate(fact1) at (2.4,1.2);
   \coordinate(fact2) at (3.1,-.2);
   
 \coordinate (Fs) at (3.3,3.2);
  \coordinate (Ft) at (4.5,3.2);
   \coordinate (Flabel) at (3.8,3.5);
   
    \coordinate (Fbs) at (5.3,-.3);
  \coordinate (Fbt) at (6.4,1);
    \coordinate (Fblabel) at (5.9,.1);

   \begin{scope}[every coordinate/.style={shift={(0,0)}}]
\draw[draw=gray] ([c]AE1) .. controls ([c]BE1) ..
       ([c]CE1);
       \draw[draw=gray]  plot [smooth, tension=3] coordinates { ([c]AE1) ([c]Hh1) ([c]Hh2)};
       \draw[draw=gray]  plot [smooth, tension=3] coordinates { ([c]CE1) ([c]HE1) ([c]HE2)};

       \draw[draw=gray] ([c]AE2) .. controls ([c]BE2) ..
       ([c]CE2);
       \draw[draw=gray]  plot [smooth, tension=3] coordinates { ([c]AE2) ([c]Jj1) ([c]Jj2)};
       \draw[draw=gray]  plot [smooth, tension=3] coordinates { ([c]CE2) ([c]JE1) ([c]JE2)};

        \draw[thick] ([c]T2) -- ([c]T1);
        \draw[thick] ([c]R1) -- ([c]R2)node[xshift=.1cm,yshift=-.1cm]{\scriptsize{$1$}};
          \draw[thick] ([c]R1b) -- ([c]R2b)node[xshift=.1cm]{\scriptsize{$1$}};
       
       \draw[->,thick] ([c]Fs) to ([c]Ft);
        \node at ([c]Flabel) [xshift=.15cm,yshift=.1cm] {\scriptsize{$f$}};
       \node at ([c]CE1) [xshift=-.1cm,yshift=.4cm] {\begin{color}{gray}\scriptsize{$E_1$}\end{color}};
         \node at ([c]JE2) [xshift=.2cm] {\begin{color}{gray}\scriptsize{$E_2$}\end{color}};
  \node at ([c]T1) [below] {\scriptsize{$2$}};

 \end{scope}

 \coordinate(Q1) at  (1,5);
 \coordinate (Q2) at (10,5);
 \coordinate (Q3) at (10,1 );
 \coordinate (Q4) at (1,1);
 \coordinate (Q3long) at (10,-2.5 );
 \coordinate (Q4long) at (1,-2.5);
\coordinate (Qlongmiddle) at (5.5,-2.5);

 \coordinate(P1) at  (-0.2,1);
 \coordinate (P2) at ( 3.5,1);
 \coordinate (P3) at (2.5,-1 );
 \coordinate (P4) at (-1.2,-1);

  \begin{scope}[every coordinate/.style={shift={(0,0)}}]     
 \draw (Q1)-- ( Q2)--(Q3)--(Q4)--(Q1);   
 \end{scope}
 

  
  \coordinate (G1) at (1,0);
 \coordinate (G2) at (1.35,.45);
 \coordinate (G3) at (.5,-.8);
 \coordinate (G4) at (1.1,-.8);
 \coordinate (G5) at (1.65,.8);
 \coordinate (G6) at (1.4,.8);
  
   \begin{scope}[every coordinate/.style={shift={(6,3)}}]
\draw[thick] ([c]G1)  ellipse (1.3cm and 0.5cm);
      \draw[thick] ([c]G5)--([c]G3) ;
      \draw[thick] ([c]G6)--([c]G4) ;

 \draw[dashed] ([c]P1)-- ( [c]P2)--([c]P3)--([c]P4)--([c]P1);  
    
     \fill[gray] ([c]G2) circle (3pt);
          \node at ([c]G2) [right] {\begin{color}{gray}\scriptsize{$f(E_2)$}\end{color}};
     \fill[gray] ([c]G2)[xshift=-1.6cm,yshift=-.6cm] circle (3pt);
          \node at ([c]G2)[xshift=-.65cm,yshift=-.45cm] {\begin{color}{gray}\scriptsize{$f(E_1)$}\end{color}};

    \end{scope}

  \node at (5,1) [below] {\scriptsize{$\pazocal E^2\pazocal E^{(1^2)}$}};

\begin{scope}[every coordinate/.style={shift={(14,0)}}]
\draw[thick] ([c]A) .. controls ([c]B) ..
       ([c]C) .. controls ([c]D)  .. ([c]E) .. controls ([c]F).. ([c]G);
      \draw[thick]  plot [smooth, tension=3] coordinates { ([c]A) ([c]H1) ([c]H2)};
       \draw[thick]  plot [smooth, tension=3] coordinates { ([c]G) ([c]J1) ([c]J2)};
       \draw[->,thick] ([c]Fs) to ([c]Ft);
        \node at ([c]Flabel) [xshift=.15cm,yshift=.1cm] {\scriptsize{$f$}};
       \end{scope}
 \coordinate(Q1) at  (1,5);
 \coordinate (Q2) at (10,5);
 \coordinate (Q3) at (10,1 );
 \coordinate (Q4) at (1,1);

  \begin{scope}[every coordinate/.style={shift={(14,0)}}]     
 \draw ([c]Q1)-- ( [c]Q2)--([c]Q3)--([c]Q4)--([c]Q1);   
 \end{scope}

 \coordinate (Am) at (0,0);
 \coordinate (Bm) at(0.2,-.3);
 \coordinate (Cm) at  ( .4,0);
 \coordinate (Dm) at (.6,.3)  ;
  \coordinate (Em) at (.8,0) ;
  \coordinate (Fm) at (1,-.3);
  \coordinate (Gm) at  (1.2,0);
  \coordinate (H1m) at (-.5,.3);
 \coordinate (H2m) at (-.2,.2);
\coordinate (J1m) at (1.4,.2);
 \coordinate (J2m) at (1.7,.3);
 \coordinate (N1m) at (3.2,-.3);
 \coordinate (N2m) at (.2,-.3);
  \coordinate (N3m) at (3,.3);
 \coordinate(P1) at  (-0.2,1);
 \coordinate (P2) at ( 3.5,1);
 \coordinate (P3) at (2.5,-1 );
 \coordinate (P4) at (-1.2,-1);
 
 \begin{scope}[every coordinate/.style={shift={(20,3)}}]
 \draw[thick]  plot [smooth, tension=3] coordinates {([c]H1m) ([c]H2m) ([c]Am)  };
\draw[thick] ([c]Am) .. controls ([c]Bm) ..
       ([c]Cm) .. controls ([c]Dm)  .. ([c]Em) .. controls ([c]Fm).. ([c]Gm);
    \draw[thick]  plot [smooth, tension=3] coordinates { ([c]Gm) ([c]J1m) ([c]J2m)  };   
     \draw[thick]   ([c]J2m).. controls ([c]N1m) and ([c]N2m).. ([c]N3m);
 \draw[dashed] ([c]P1)-- ( [c]P2)--([c]P3)--([c]P4)--([c]P1);  
    \end{scope}   
       

  \draw (7.5,1)--(13.,-1);
\draw (19.5,1)--(16.,-1);

 \node at (12.3,3) [above] { $\cap$};


\begin{scope}[every coordinate/.style={shift={(8,-6)}}]

 \draw ([c]Q1)-- ( [c]Q2)--([c]Q3long)--([c]Q4long)--([c]Q1);   
 \end{scope}

\begin{scope}[every coordinate/.style={shift={(8,-6)}}]

\draw[draw=gray] ([c]AE1) .. controls ([c]BE1) ..
       ([c]CE1);
       \draw[draw=gray]  plot [smooth, tension=3] coordinates { ([c]AE1) ([c]Hh1) ([c]Hh2)};
       \draw[draw=gray]  plot [smooth, tension=3] coordinates { ([c]CE1) ([c]HE1) ([c]HE2)};

       \draw[draw=gray] ([c]AE2) .. controls ([c]BE2) ..
       ([c]CE2);
       \draw[draw=gray]  plot [smooth, tension=3] coordinates { ([c]AE2) ([c]Jj1) ([c]Jj2)};
       \draw[draw=gray]  plot [smooth, tension=3] coordinates { ([c]CE2) ([c]JE1) ([c]JE2)};

        \draw[thick] ([c]T2) -- ([c]T1);
       \draw[thick] ([c]R2) -- ([c]R1);
       \draw[thick] ([c]R2b) -- ([c]R1b);

       \draw[->,thick] ([c]Fs) to ([c]Ft);
        \node at ([c]Flabel) [xshift=.15cm,yshift=.1cm] {\scriptsize{$f$}};
       
\draw[->,dotted] ([c]fact1) to ([c]fact2);
                          
     \draw[->,thick] ([c]Fbs) to ([c]Fbt);         
 \node at ([c]Fblabel) [right] {\scriptsize{$\bar{f}$}};
\node at ([c]Qlongmiddle) [below] {\scriptsize{$\pazocal E^2\pazocal E^{(1^2)}\cap\overline{\pazocal M}^{\text{main}}$}};
 \end{scope}
 
 
 \coordinate (DS) at (2.3,3.35) ;
 
 \coordinate (ES) at (2.6,2.8) ;
 \coordinate (FS) at (1.5,2) ;
 \coordinate (S1) at (2,4) ;
 \coordinate (S2) at (3.2,2.5) ;
 \coordinate (S1b) at (3.2,4) ;
 \coordinate (S2b) at (1.5,2.9) ;
 
\begin{scope}[every coordinate/.style={shift={(10.1,-9.5)}}]

\draw[thick] ([c]B) parabola ([c]H2);
   \draw[thick]   ([c]B) parabola  ([c]DS);
      \draw[thick] ([c]DS) .. controls ([c]ES)..([c]FS);
\end{scope}
\begin{scope}[every coordinate/.style={shift={(10,-9.9)}}]
\draw[thick] ([c]S1) -- ([c]S2);
   
\end{scope}

\begin{scope}[every coordinate/.style={shift={(10,-10.2)}}]

   \draw[thick] ([c]S1b) -- ([c]S2b);
\end{scope}

 \coordinate (F5) at (-.3,.2) ;
  \coordinate (F6) at (2,.2) ;
   \coordinate (F8) at (.1,.9) ;
   \coordinate (F9) at (1.8,-.21) ;
   
   \coordinate (F7) at (.9,.2) ;
  
  \coordinate (F10) at (1.7,.2) ;

  \begin{scope}[every coordinate/.style={shift={(14,-3)}}]

       \draw[thick] ([c]F5)--([c]F6);
        \draw[thick] ([c]E8)--(15.2,-2);
         \draw[thick] ([c]F8)--([c]F9);
   
 \draw[dashed] ([c]P1)-- ( [c]P2)--([c]P3)--([c]P4)--([c]P1);

     \fill[blue] ([c]F7)[xshift=-.5cm] circle (3pt);
          \node at ([c]F7)[xshift=-.4cm,yshift=-.3cm] {\begin{color}{gray}\scriptsize{$f(E_1)$}\end{color}};
          
     \fill[gray] ([c]F7)[xshift=.25cm] circle (3pt);
        \node at ([c]F7)[xshift=.3cm,yshift=-.3cm] {\begin{color}{gray}\scriptsize{$f(E_2)$}\end{color}};
   
 \fill[blue] ([c]F7)[xshift=.8cm] circle (3pt);
\end{scope}
   \end{tikzpicture}
   \caption{$\pazocal E^2\!\pazocal E^{(1^2)},\dim=13$; $\pazocal E^2\!\pazocal E^{(1^2)}\cap\overline{\pazocal M}^{\text{main}},\dim=11$}
   \label{E2E11}
\end{figure}

 \item[${}^{(1)}\!\pazocal{E}^2\!\pazocal{E}^{(1)}$ component] The image consists of a conic and two lines. The intersection with \emph{main} has three components, all of dimension $10$: the separating bridge covers a line two-to-one; and the other lines can either be equal (tangent) to this one, or meet it in a branch point of the cover. Again, we note that aligning creates two-dimensional fibres, entirely contained in the factorisation locus. See Figures \ref{1E2E1} and \ref{fig:1E2E1}.
 
\begin{figure}[h]
   \begin{tikzpicture}[scale=.5]

\coordinate (Hh1) at (2,4);
 \coordinate (Hh2) at (2.1,4.5);
 
 \coordinate (AE1) at (1.7,3.7);
 \coordinate (BE1) at (1.4,3.5);
 \coordinate (CE1) at   (1.7,3.3);
 \coordinate (HE1) at (2,3);
 \coordinate (HE2) at (2.1,2.5);

\coordinate (Jj1) at (2.5,4);
 \coordinate (Jj2) at (2.6,4.5);
 
 \coordinate (AE2) at (2.9,3.7);
 \coordinate (BE2) at (3.2,3.5);
 \coordinate (CE2) at   (2.9,3.3);
 \coordinate (JE1) at (2.6,3);
 \coordinate (JE2) at (2.5,2.5);

 \coordinate (T1) at (1.7,3);
 \coordinate (T2) at (3,3);
 \coordinate (R1) at (2.6,3.5);
 \coordinate (R2) at (3.6,4.5);
 \coordinate (R1b) at (2,3.5);
 \coordinate (R2b) at (1.2,4.5);

   \coordinate(fact1) at (2.4,1.2);
   \coordinate(fact2) at (3.1,-.2);
   
 \coordinate (Fs) at (3.1,3.2);
  \coordinate (Ft) at (4.5,3.2);
   \coordinate (Flabel) at (3.8,3.5);
   
    \coordinate (Fbs) at (5.3,-.3);
  \coordinate (Fbt) at (6.4,1);
    \coordinate (Fblabel) at (5.9,.1);

   \begin{scope}[every coordinate/.style={shift={(0,0)}}]
\draw[draw=gray] ([c]AE1) .. controls ([c]BE1) ..
       ([c]CE1);
       \draw[draw=gray]  plot [smooth, tension=3] coordinates { ([c]AE1) ([c]Hh1) ([c]Hh2)};
       \draw[draw=gray]  plot [smooth, tension=3] coordinates { ([c]CE1) ([c]HE1) ([c]HE2)};

       \draw[draw=gray] ([c]AE2) .. controls ([c]BE2) ..
       ([c]CE2);
       \draw[draw=gray]  plot [smooth, tension=3] coordinates { ([c]AE2) ([c]Jj1) ([c]Jj2)};
       \draw[draw=gray]  plot [smooth, tension=3] coordinates { ([c]CE2) ([c]JE1) ([c]JE2)};

        \draw[thick] ([c]T2) -- ([c]T1);
        \draw[thick] ([c]R1) -- ([c]R2);
               \draw[thick] ([c]R1b) -- ([c]R2b);

       \draw[->,thick] ([c]Fs) to ([c]Ft);
        \node at ([c]Flabel) [xshift=.15cm,yshift=.1cm] {\scriptsize{$f$}};
       \node at ([c]CE1) [xshift=.05cm,yshift=.6cm] {\begin{color}{gray}\scriptsize{$E_1$}\end{color}};
         \node at ([c]JE2) [xshift=.2cm] {\begin{color}{gray}\scriptsize{$E_2$}\end{color}};
  \node at ([c]T1) [below] {\scriptsize{$2$}};
    
 \end{scope}
 
 \coordinate(Q1) at  (.5,5.5);
 \coordinate (Q2) at (10,5.5);
 \coordinate (Q3) at (10,1 );
 \coordinate (Q4) at (.5,1);
 \coordinate (Q3long) at (10,-2.5 );
 \coordinate (Q4long) at (.5,-2.5);
\coordinate (Qlongmiddle) at (5.5,-2.5);

 \coordinate(P1) at  (-0.2,1);
 \coordinate (P2) at ( 3.5,1);
 \coordinate (P3) at (2.5,-1 );
 \coordinate (P4) at (-1.2,-1);

  \begin{scope}[every coordinate/.style={shift={(0,0)}}]     
 \draw (Q1)-- ( Q2)--(Q3)--(Q4)--(Q1);   
 \end{scope}
 
  
  
  \coordinate (G1) at (1,0);
 \coordinate (G2) at (1.35,.45);
 \coordinate (G3) at (.5,-.8);
 \coordinate (G4) at (1.5,-.8);
 \coordinate (G5) at (1.65,.8);
 \coordinate (G6) at (.7,1);
  
   \begin{scope}[every coordinate/.style={shift={(6,3)}}]
\draw[thick] ([c]G1)  ellipse (1.3cm and 0.5cm);
      \draw[thick] ([c]G5)--([c]G3) ;
      \draw[thick] ([c]G6)--([c]G4) ;

 \draw[dashed] ([c]P1)-- ( [c]P2)--([c]P3)--([c]P4)--([c]P1);  
    
     \fill[gray] ([c]G2)[xshift=.05cm] circle (3pt);
          \node at ([c]G2) [right] {\begin{color}{gray}\scriptsize{$f(E_2)$}\end{color}};
     \fill[gray] ([c]G2)[xshift=-.4cm] circle (3pt);
          \node at ([c]G2) [xshift=-.6cm] {\begin{color}{gray}\scriptsize{$f(E_1)$}\end{color}};

    \end{scope}

\node at (5,1) [below] {\scriptsize{${}^{(1)}\pazocal E^2\pazocal E^{(1)}$}};

\begin{scope}[every coordinate/.style={shift={(14,0)}}]
\draw[thick] ([c]A) .. controls ([c]B) ..
       ([c]C) .. controls ([c]D)  .. ([c]E) .. controls ([c]F).. ([c]G);
      \draw[thick]  plot [smooth, tension=3] coordinates { ([c]A) ([c]H1) ([c]H2)};
       \draw[thick]  plot [smooth, tension=3] coordinates { ([c]G) ([c]J1) ([c]J2)};
       \draw[->,thick] ([c]Fs) to ([c]Ft);
         \node at ([c]Flabel) [xshift=.15cm,yshift=.1cm] {\scriptsize{$f$}};
       \end{scope}
 \coordinate(Q1) at  (.5,5.5);
 \coordinate (Q2) at (10,5.5);
 \coordinate (Q3) at (10,1 );
 \coordinate (Q4) at (.5,1);
 \coordinate (Q3long) at (10,-2.5 );
 \coordinate (Q4long) at (.5,-2.5);
\coordinate (Qlongmiddle) at (5.5,-2.5);
  \begin{scope}[every coordinate/.style={shift={(14,0)}}]     
 \draw ([c]Q1)-- ( [c]Q2)--([c]Q3)--([c]Q4)--([c]Q1);   
 \end{scope}

 \coordinate (Am) at (0,0);
 \coordinate (Bm) at(0.2,-.3);
 \coordinate (Cm) at  ( .4,0);
 \coordinate (Dm) at (.6,.3)  ;
  \coordinate (Em) at (.8,0) ;
  \coordinate (Fm) at (1,-.3);
  \coordinate (Gm) at  (1.2,0);
  \coordinate (H1m) at (-.5,.3);
 \coordinate (H2m) at (-.2,.2);
\coordinate (J1m) at (1.4,.2);
 \coordinate (J2m) at (1.7,.3);
 \coordinate (N1m) at (3.2,-.3);
 \coordinate (N2m) at (.2,-.3);
  \coordinate (N3m) at (3,.3);
 \coordinate(P1) at  (-0.2,1);
 \coordinate (P2) at ( 3.5,1);
 \coordinate (P3) at (2.5,-1 );
 \coordinate (P4) at (-1.2,-1);
 
 \begin{scope}[every coordinate/.style={shift={(20,3)}}]
 \draw[thick]  plot [smooth, tension=3] coordinates {([c]H1m) ([c]H2m) ([c]Am)  };
\draw[thick] ([c]Am) .. controls ([c]Bm) ..
       ([c]Cm) .. controls ([c]Dm)  .. ([c]Em) .. controls ([c]Fm).. ([c]Gm);
    \draw[thick]  plot [smooth, tension=3] coordinates { ([c]Gm) ([c]J1m) ([c]J2m)  };   
     \draw[thick]   ([c]J2m).. controls ([c]N1m) and ([c]N2m).. ([c]N3m);
 \draw[dashed] ([c]P1)-- ( [c]P2)--([c]P3)--([c]P4)--([c]P1);  
    \end{scope}   
       

\draw (7.5,1)--(4,-1.5);
\draw (7.5,1)--(13.,-1.5);
\draw (7.5,1)--(24,-1.5);

 \node at (12.3,3) [above] { $\cap$};

\begin{scope}[every coordinate/.style={shift={(-2,-7)}}]     
 \draw ([c]Q1)-- ( [c]Q2)--([c]Q3long)--([c]Q4long)--([c]Q1);   
  \node at ([c]Qlongmiddle) [below] {\scriptsize{${}^{(1)}\pazocal E^2\pazocal E^{(1)}\cap\overline{\pazocal M}^{\text{main}}$, tacnodes}};
 \end{scope}

\begin{scope}[every coordinate/.style={shift={(-2,-7)}}]
\draw[draw=gray] ([c]AE1) .. controls ([c]BE1) ..
       ([c]CE1);
       \draw[draw=gray]  plot [smooth, tension=3] coordinates { ([c]AE1) ([c]Hh1) ([c]Hh2)};
       \draw[draw=gray]  plot [smooth, tension=3] coordinates { ([c]CE1) ([c]HE1) ([c]HE2)};

       \draw[draw=gray] ([c]AE2) .. controls ([c]BE2) ..
       ([c]CE2);
       \draw[draw=gray]  plot [smooth, tension=3] coordinates { ([c]AE2) ([c]Jj1) ([c]Jj2)};
       \draw[draw=gray]  plot [smooth, tension=3] coordinates { ([c]CE2) ([c]JE1) ([c]JE2)};

        \draw[thick] ([c]T2) -- ([c]T1);
        \draw[thick] ([c]R1) -- ([c]R2);
               \draw[thick] ([c]R1b) -- ([c]R2b);

       \draw[->,thick] ([c]Fs) to ([c]Ft);
          \node at ([c]Flabel) [xshift=.15cm,yshift=.1cm] {\scriptsize{$f$}};

    \draw[->,dotted] ([c]fact1) to ([c]fact2);
                          
     \draw[->,thick] ([c]Fbs) to ([c]Fbt);         
 \node at ([c]Fblabel) [right] {\scriptsize{$\bar{f}$}};
    
 \end{scope}

 
 \coordinate (D1S) at (1.5,3) ;
 \coordinate (E1S) at (2.5,2.5) ;
 \coordinate (F1S) at (1.5,2) ;
 
 \coordinate (D2S) at (1.5,4.8) ;
 \coordinate (E2S) at (2.5,4.2) ;
 \coordinate (F2S) at (1.5,3.5) ;
 
 \coordinate (S1) at (2.5,5) ;
 \coordinate (S2) at (2.5,1.8) ;

\begin{scope}[every coordinate/.style={shift={(0.1,-10.5)}}]

\draw[thick] ([c]F1S) parabola ([c]E1S);
   \draw[thick] ([c]D1S) parabola ([c]E1S);
   
   \draw[thick] ([c]F2S) parabola ([c]E2S);
   \draw[thick] ([c]D2S) parabola ([c]E2S);

\draw[thick] ([c]S1) -- ([c]S2);
   
\end{scope}


\coordinate (GS12) at (-.5,0.2);
 \coordinate (GS22) at (2.4,0.2);
    \coordinate (GS11) at (-.5,0.1);
 \coordinate (GS21) at (2.4,0.1);
  \coordinate (GS1) at (-.5,0);
 \coordinate (GS2) at (2.4,.0);
   \coordinate (GB) at (1.35,0);
   \coordinate (GB2) at (.55,0);
   
   \begin{scope}[every coordinate/.style={shift={(4,-4)}}]
      \draw[thick] ([c]GS1) -- ([c]GS2);
    \draw[thick] ([c]GS11) -- ([c]GS21);
     \draw[thick] ([c]GS12) -- ([c]GS22);
   
 \draw[dashed] ([c]P1)-- ( [c]P2)--([c]P3)--([c]P4)--([c]P1);  
 \fill[blue] ([c]GB) circle (3pt);
      \fill[blue] ([c]GB2) circle (3pt);

    \end{scope}


\begin{scope}[every coordinate/.style={shift={(8,-7)}}]     
 \draw ([c]Q1)-- ( [c]Q2)--([c]Q3long)--([c]Q4long)--([c]Q1);   
  \node at ([c]Qlongmiddle) [below] {\scriptsize{${}^{(1)}\pazocal E^2\pazocal E^{(1)}\cap\overline{\pazocal M}^{\text{main}}$,  sprouting}};
 \end{scope}

\begin{scope}[every coordinate/.style={shift={(8,-7)}}]
\draw[draw=gray] ([c]AE1) .. controls ([c]BE1) ..
       ([c]CE1);
       \draw[draw=gray]  plot [smooth, tension=3] coordinates { ([c]AE1) ([c]Hh1) ([c]Hh2)};
       \draw[draw=gray]  plot [smooth, tension=3] coordinates { ([c]CE1) ([c]HE1) ([c]HE2)};

       \draw[draw=gray] ([c]AE2) .. controls ([c]BE2) ..
       ([c]CE2);
       \draw[draw=gray]  plot [smooth, tension=3] coordinates { ([c]AE2) ([c]Jj1) ([c]Jj2)};
       \draw[draw=gray]  plot [smooth, tension=3] coordinates { ([c]CE2) ([c]JE1) ([c]JE2)};

        \draw[thick] ([c]T2) -- ([c]T1);
        \draw[thick] ([c]R1) -- ([c]R2);
               \draw[thick] ([c]R1b) -- ([c]R2b);

       \draw[->,thick] ([c]Fs) to ([c]Ft);
          \node at ([c]Flabel) [xshift=.15cm,yshift=.1cm] {\scriptsize{$f$}};

    \draw[->,dotted] ([c]fact1) to ([c]fact2);
                          
     \draw[->,thick] ([c]Fbs) to ([c]Fbt);         
 \node at ([c]Fblabel) [right] {\scriptsize{$\bar{f}$}};
    
 \end{scope}


 \coordinate (Ss1) at (2,5.8) ;
 \coordinate (Ss2) at (2,4.2) ;

\begin{scope}[every coordinate/.style={shift={(10.1,-10.5)}}]

\draw[thick] ([c]F1S) parabola ([c]E1S);
   \draw[thick] ([c]D1S) parabola ([c]E1S);
   
   \draw[dashed] ([c]F2S) parabola ([c]E2S);
   \draw[dashed] ([c]D2S) parabola ([c]E2S);

\draw[thick] ([c]S1) -- ([c]S2);
  \draw[thick] ([c]Ss1) -- ([c]Ss2);   
\end{scope}


\coordinate (GI1) at (1.2,- 0.7);
 \coordinate (GI2) at (1.5,.7);
     \coordinate (GB) at (1.35,0);
     \coordinate (GB2) at (.55,0);
     
   \begin{scope}[every coordinate/.style={shift={(14,-4)}}]
      \draw[thick] ([c]GS1) -- ([c]GS2);
    \draw[thick] ([c]GS11) -- ([c]GS21);
     \draw[thick] ([c]GI1) -- ([c]GI2);
      \fill[blue] ([c]GB) circle (3pt);
      \fill[blue] ([c]GB2) circle (3pt);
   
 \draw[dashed] ([c]P1)-- ( [c]P2)--([c]P3)--([c]P4)--([c]P1);

    \end{scope}

\begin{scope}[every coordinate/.style={shift={(18,-7)}}]     
 \draw ([c]Q1)-- ( [c]Q2)--([c]Q3long)--([c]Q4long)--([c]Q1);   
  \node at ([c]Qlongmiddle) [below] {\scriptsize{${}^{(1)}\pazocal E^2\pazocal E^{(1)}\cap\overline{\pazocal M}^{\text{main}}$, ribbon}};
 \end{scope}

\begin{scope}[every coordinate/.style={shift={(18,-7)}}]
\draw[draw=gray] ([c]AE1) .. controls ([c]BE1) ..
       ([c]CE1);
       \draw[draw=gray]  plot [smooth, tension=3] coordinates { ([c]AE1) ([c]Hh1) ([c]Hh2)};
       \draw[draw=gray]  plot [smooth, tension=3] coordinates { ([c]CE1) ([c]HE1) ([c]HE2)};

       \draw[draw=gray] ([c]AE2) .. controls ([c]BE2) ..
       ([c]CE2);
       \draw[draw=gray]  plot [smooth, tension=3] coordinates { ([c]AE2) ([c]Jj1) ([c]Jj2)};
       \draw[draw=gray]  plot [smooth, tension=3] coordinates { ([c]CE2) ([c]JE1) ([c]JE2)};

        \draw[thick] ([c]T2) -- ([c]T1);
        \draw[thick] ([c]R1) -- ([c]R2);
               \draw[thick] ([c]R1b) -- ([c]R2b);

       \draw[->,thick] ([c]Fs) to ([c]Ft);
          \node at ([c]Flabel) [xshift=.15cm,yshift=.1cm] {\scriptsize{$f$}};

    \draw[->,dotted] ([c]fact1) to ([c]fact2);
                          
     \draw[->,thick] ([c]Fbs) to ([c]Fbt);         
 \node at ([c]Fblabel) [right] {\scriptsize{$\bar{f}$}};
    
 \end{scope}


 \coordinate (YS1) at (0,4.8) ;
 \coordinate (YS2) at (0,2) ;
  \coordinate (YS11) at (0.2,4.8) ;
 \coordinate (YS12) at (0.2,2) ;

  \coordinate (ZS1) at (-.3,4.5) ;
 \coordinate (ZS2) at (1.5,4.5) ;
 
  \coordinate (WS1) at (-.3,2.8) ;
 \coordinate (WS2) at (1.5,2.8) ;
 
\begin{scope}[every coordinate/.style={shift={(22.1,-10.5)}}]

\draw[thick] ([c]YS1) -- ([c]YS2)--([c]YS12)--([c]YS11)--([c]YS1);
 \draw[thick] ([c]ZS1) -- ([c]ZS2);
 \draw[thick] ([c]WS1) -- ([c]WS2);
\end{scope}


\coordinate (GI1) at (1.2,- 0.7);
 \coordinate (GI2) at (1.5,.7);
 \coordinate (GI11) at (.4,- 0.7);
 \coordinate (GI12) at (.7,.7);
     \coordinate (GB) at (1.35,0);
      \coordinate (GB2) at (.55,0);
   \begin{scope}[every coordinate/.style={shift={(24,-4)}}]
      \draw[thick] ([c]GS1) -- ([c]GS2);
       \draw[thick] ([c]GI11) -- ([c]GI12);
     \draw[thick] ([c]GI1) -- ([c]GI2);
     \fill[blue] ([c]GB) circle (3pt);
      \fill[blue] ([c]GB2) circle (3pt);
   
 \draw[dashed] ([c]P1)-- ( [c]P2)--([c]P3)--([c]P4)--([c]P1);

    \end{scope}   

  \end{tikzpicture}
  \caption{${}^{(1)}\pazocal E^2\pazocal E^{(1)},\dim=13$; ${}^{(1)}\pazocal E^2\pazocal E^{(1)}\cap\overline{\pazocal M}^{\text{main}},\dim=10$}
  \label{1E2E1}
\end{figure}

 \begin{figure}[hbt]
 \begin{tikzpicture}[scale=.6]
  \foreach \x in {(0,0),(2,0),(5,0),(7,0),(9,0),(11.5,-.5),(13.5,0),(15,.5),(16.5,0)}
  \draw[fill=black] \x circle (2pt);
  \draw (0,0)--node[above]{\tiny $\ell_1$}(1,1)--node[above]{\tiny $\ell_2$}(2,0)--node[above]{\tiny $\ell_3$}(3.5,1.5)--node[above]{\tiny $\ell_4$}(5,0);
  \draw (7,0) --node[above]{\tiny $\ell_1$} (8,1) --node[above]{\tiny $\ell_2$} (9,0) --node[above]{\tiny $\ell_3$} (10,1) --node[above]{\tiny $\ell_4$} (11.5,-.5);
  \draw (13.5,0) --node[above]{\tiny $\ell_1$} (14.5,1) --node[above]{\tiny $\ell_2$} (15,.5) --node[above]{\tiny $\ell_3$} (15.5,1) --node[above]{\tiny $\ell_4$} (16.5,0);
  \foreach \x in {(1,1),(3.5,1.5),(8,1),(10,1),(14.5,1),(15.5,1)}
  \draw[fill=white] \x circle (2pt);
  \draw (2.5,-1) node{$\ell_1=\ell_2,\ell_3=\ell_4$};
  \draw (9.25,-1) node{$\ell_1=\ell_2=\ell_3$};
  \draw (15,-1) node{$\ell_1=\ell_4,\ell_2=\ell_3$};
  \draw (1,-2) -- (2,-2) (3,-2) -- (4,-2) (1.75,-2.1)--(3.25,-2.1)--(3.25,-1.9)--(1.75,-1.9)--(1.75,-2.1);
  \fill[blue] (2.25,-2) circle (1pt) (2.75,-2) circle (1pt);
  \draw (7.75,-2) -- (8.75,-2) (9.75,-2) -- (9.75,-3) (8.5,-2.1)--(10,-2.1)--(10,-1.9)--(8.5,-1.9)--(8.5,-2.1);
  \fill[blue] (9.25,-2) circle (1pt) (9.75,-2) circle (1pt);
  \draw (14.5,-2) -- (14.5,-3) (15.5,-2) -- (15.5,-3) (14.25,-2.1)--(15.75,-2.1)--(15.75,-1.9)--(14.25,-1.9)--(14.25,-2.1);
  \fill[blue] (14.5,-2) circle (1pt) (15.5,-2) circle (1pt);
 \end{tikzpicture}
\caption{Inconsequential alignment and factorisation.}
\label{fig:1E2E1}
\end{figure}
 \item[${}^{\mu_1}\!\pazocal{E}^1\!\pazocal{E}^{\mu_2}$ component] With $\sum\mu_1+\sum\mu_2=3$. These components meet \emph{main} where $\pazocal D^{\mu_1\sqcup(1)\sqcup\mu_2}$ do. We refer the reader to the above discussion of such loci.
 
 \item[${}^{\text{br}=4}\!\pazocal{E}$ component] The image is a three-nodal quartic, the elliptic curve is contracted to one of the nodes. The dimension is $13$. A map is smoothable if the image contains a tacnode, i.e. type $A_1-A_3$. The dimension is $12$.
 See Figure \ref{br4E}.
 
\begin{figure}[h]
   \begin{tikzpicture}[scale=.4]

\coordinate (Hh1) at (2,4);
 \coordinate (Hh2) at (2.1,4.5);
 
 \coordinate (AE1) at (1.7,3.7);
 \coordinate (BE1) at (1.4,3.5);
 \coordinate (CE1) at   (1.7,3.3);
 \coordinate (HE1) at (2,3);
 \coordinate (HE2) at (2.1,2.5);

 \coordinate (T1) at (1.5,3);
 \coordinate (T2) at (3,3.5);
  \coordinate (T3) at (1.5,4);

   \coordinate(fact1) at (2.4,1.2);
   \coordinate(fact2) at (3.1,-.2);
   
 \coordinate (Fs) at (3.1,3.2);
  \coordinate (Ft) at (4.5,3.2);
   \coordinate (Flabel) at (3.8,3.5);
   
    \coordinate (Fbs) at (5.3,-.3);
  \coordinate (Fbt) at (6.4,1);
    \coordinate (Fblabel) at (5.9,.1);

   \begin{scope}[every coordinate/.style={shift={(0,0)}}]
\draw[draw=gray] ([c]AE1) .. controls ([c]BE1) ..
       ([c]CE1);
       \draw[draw=gray]  plot [smooth, tension=3] coordinates { ([c]AE1) ([c]Hh1) ([c]Hh2)};
       \draw[draw=gray]  plot [smooth, tension=3] coordinates { ([c]CE1) ([c]HE1) ([c]HE2)};

        \draw[thick] ([c]T1) .. controls ([c]T2) .. ([c]T3);
       \node at ([c]T2) [xshift=.01cm,yshift=.1cm]{\scriptsize{$4$}};
       
       \draw[->,thick] ([c]Fs) to ([c]Ft);
        \node at ([c]Flabel) [above] {\scriptsize{$f$}};
            
         \node at ([c]HE2) [below,xshift=.1cm] {\begin{color}{gray}\scriptsize{$E,g=1$}\end{color}};

 \end{scope}

 \coordinate(Q1) at  (1,5);
 \coordinate (Q2) at (10,5);
 \coordinate (Q3) at (10,1 );
 \coordinate (Q4) at (1,1);
 \coordinate (Q3long) at (10,-2.5 );
 \coordinate (Q4long) at (1,-2.5);
\coordinate (Qlongmiddle) at (5.5,-2.5);

 \coordinate(P1) at  (-0.2,1);
 \coordinate (P2) at ( 3.5,1);
 \coordinate (P3) at (2.5,-1 );
 \coordinate (P4) at (-1.2,-1);

  \begin{scope}[every coordinate/.style={shift={(0,0)}}]     
 \draw (Q1)-- ( Q2)--(Q3)--(Q4)--(Q1);   
 \end{scope}
 

 \coordinate (An) at (-.5,.5);
 \coordinate (B1m) at(2,-.2);
  \coordinate (B2m) at(-1.8,-.2);
 \coordinate (Cn) at  (1,.5);

 \coordinate (D1n) at (3,-.2)  ;
  \coordinate (D2n) at (-.5,-.2) ;
  \coordinate (En) at (2,.5);
  \coordinate (F1n) at  (4,-.2);
   \coordinate (F2n) at (1,-.2);
   \coordinate (Gn) at (3,.5);

 \coordinate(P1) at  (-0.2,1);
 \coordinate (P2) at ( 3.5,1);
 \coordinate (P3) at (2.5,-1 );
 \coordinate (P4) at (-1.2,-1);
 
 \begin{scope}[every coordinate/.style={shift={(6,3)}}]

\draw[thick] ([c]An) .. controls ([c]B1m) and ([c]B2m) ..
       ([c]Cn) .. controls ([c]D1n) and ([c]D2n)  .. ([c]En) .. controls ([c]F1n) and ([c]F2n)  .. ([c]Gn);
   
 \draw[dashed] ([c]P1)-- ( [c]P2)--([c]P3)--([c]P4)--([c]P1);  
    
     \fill[gray] ([c]Cn) circle (3pt);
          \node at ([c]Cn) [above] {\begin{color}{gray}\scriptsize{$f(E)$}\end{color}};
           
    \end{scope}   

     \node at (5,1) [below] {\scriptsize{${}^{\textbf{br}=4}\pazocal E$}};


    \coordinate (A) at (2,4);
 \coordinate (B) at (1.7,3.8);
 \coordinate (C) at   (2,3.6);
 \coordinate (D) at (2.3,3.4) ;
  \coordinate (E) at (2,3.2) ;
  \coordinate (F) at (1.7,3);
  \coordinate (G) at  (2,2.8);
  \coordinate (H1) at (2.3,4.3);
 \coordinate (H2) at (2.4,4.8);
  \coordinate (J1) at (2.3,2.4);
 \coordinate (J2) at (2.4,2);

\begin{scope}[every coordinate/.style={shift={(14,0)}}]
\draw[thick] ([c]A) .. controls ([c]B) ..
       ([c]C) .. controls ([c]D)  .. ([c]E) .. controls ([c]F).. ([c]G);
      \draw[thick]  plot [smooth, tension=3] coordinates { ([c]A) ([c]H1) ([c]H2)};
       \draw[thick]  plot [smooth, tension=3] coordinates { ([c]G) ([c]J1) ([c]J2)};
       \draw[->,thick] ([c]Fs) to ([c]Ft);
        \node at ([c]Flabel) [above] {\scriptsize{$f$}};
       \end{scope}

  \begin{scope}[every coordinate/.style={shift={(14,0)}}]     
 \draw ([c]Q1)-- ( [c]Q2)--([c]Q3)--([c]Q4)--([c]Q1);   
 \end{scope}
 

 \coordinate (Am) at (0,0);
 \coordinate (Bm) at(0.2,-.3);
 \coordinate (Cm) at  ( .4,0);
 \coordinate (Dm) at (.6,.3)  ;
  \coordinate (Em) at (.8,0) ;
  \coordinate (Fm) at (1,-.3);
  \coordinate (Gm) at  (1.2,0);
  \coordinate (H1m) at (-.5,.3);
 \coordinate (H2m) at (-.2,.2);
\coordinate (J1m) at (1.4,.2);
 \coordinate (J2m) at (1.7,.3);
 \coordinate (N1m) at (3.2,-.3);
 \coordinate (N2m) at (.2,-.3);
  \coordinate (N3m) at (3,.3);

 \begin{scope}[every coordinate/.style={shift={(20,3)}}]
 
 \draw[thick]  plot [smooth, tension=3] coordinates {([c]H1m) ([c]H2m) ([c]Am)  };
 
\draw[thick] ([c]Am) .. controls ([c]Bm) ..
       ([c]Cm) .. controls ([c]Dm)  .. ([c]Em) .. controls ([c]Fm).. ([c]Gm);
    \draw[thick]  plot [smooth, tension=3] coordinates { ([c]Gm) ([c]J1m) ([c]J2m)  };   
     \draw[thick]   ([c]J2m).. controls ([c]N1m) and ([c]N2m).. ([c]N3m);
 \draw[dashed] ([c]P1)-- ( [c]P2)--([c]P3)--([c]P4)--([c]P1);  
    \end{scope}

\draw (7.5,1)--(11,-1);
\draw (19.5,1)--(16.,-1);

 \node at (12.3,3) [above] { $\cap$};

 \begin{scope}[every coordinate/.style={shift={(8,-6)}}]

 \draw ([c]Q1)-- ( [c]Q2)--([c]Q3long)--([c]Q4long)--([c]Q1);   
 \end{scope}


 \begin{scope}[every coordinate/.style={shift={(8,-6)}}]

\draw[draw=gray] ([c]AE1) .. controls ([c]BE1) ..
       ([c]CE1);
       \draw[draw=gray]  plot [smooth, tension=3] coordinates { ([c]AE1) ([c]Hh1) ([c]Hh2)};
       \draw[draw=gray]  plot [smooth, tension=3] coordinates { ([c]CE1) ([c]HE1) ([c]HE2)};

        \draw[thick] ([c]T1) .. controls ([c]T2) .. ([c]T3);
       
       \draw[->,thick] ([c]Fs) to ([c]Ft);
        \node at ([c]Flabel) [above] {\scriptsize{$f$}};
       
\draw[->,dotted] ([c]fact1) to ([c]fact2);
                          
     \draw[->,thick] ([c]Fbs) to ([c]Fbt);         
 \node at ([c]Fblabel) [right] {\scriptsize{$\bar{f}$}};
\node at ([c]Qlongmiddle) [below] {\scriptsize{${}^{\textbf{br}=4}\pazocal E\cap\overline{\pazocal M}^{\text{main}}$}};
 \end{scope}
 
 
  \coordinate (YS1) at (-.4,4);
 \coordinate (YS2) at (-.4,2) ;
  \coordinate (X1) at (.1,4.3);
   \coordinate (X2) at (.4,4);
  \coordinate (X3) at (.4,3.8);
  \coordinate (X4) at (-.6,3);
  \coordinate (X5) at (.7,2);
\begin{scope}[every coordinate/.style={shift={(12.1,-9.5)}}]

\draw[thick] ([c]YS1) -- ([c]YS2);
   
    \draw[thick]  plot [smooth, tension=2] coordinates { ([c]YS1) ([c]X1)([c]X2) ([c]X3)};
    \draw[thick] ([c]X3) .. controls ([c]X4) .. ([c]X5);
\end{scope}


  \coordinate (GSm) at  (.4,.5);

  \coordinate (Xs1) at (0,.1);
   \coordinate (Xs2) at (.2,-.3);
  \coordinate (Xs3) at (.4,-.4);
  \coordinate (Xs4) at (1.2,.8);
  \coordinate (Xs5) at (1.4,-.5);
  
  \coordinate (Xs) at (1.1,.55);

 \coordinate (J2Sm) at (1.7,.5);
 
  \coordinate (N3Sm) at (2.7,.3);

 \coordinate (DSm) at( 0.65,.3);

 \begin{scope}[every coordinate/.style={shift={(14,-3)}}]
 
 \draw[thick]  plot [smooth, tension=2] coordinates { ([c]GSm) ([c]Xs1)([c]Xs2) ([c]Xs3)};
    \draw[thick] ([c]Xs3) .. controls ([c]Xs4) .. ([c]Xs5);
 
 \draw[thick] ([c]J2Sm) --  ([c]GSm);
     \draw[thick]   ([c]J2Sm).. controls ([c]N1m) and ([c]N2m).. ([c]N3Sm);
 \draw[dashed] ([c]P1)-- ( [c]P2)--([c]P3)--([c]P4)--([c]P1);  
 
       \fill[gray] ([c]Xs) circle (3pt);
          \node at ([c]Xs) [above] {\begin{color}{gray}\scriptsize{$f(E)$}\end{color}};
          \node at ([c]Xs) [yshift=-.3cm,xshift=-.07cm] {\scriptsize{$A_3$}};
          \node at ([c]Xs) [yshift=-.3cm,xshift=.7cm] {\scriptsize{$A_1$}};
    \end{scope}  
\end{tikzpicture}
\caption{${}^{\textbf{br}=4}\pazocal E,\dim=13$; ${}^{\textbf{br}=4}\pazocal E\cap\overline{\pazocal M}^{\text{main}},\dim=12$}
\label{br4E}
\end{figure}

 \item[${}^{\text{br}=3}\!\pazocal{E}^{(1)}$ component] The image is a nodal cubic with a line passing through the node. The dimension is $12$. This locus is contained in \emph{main}, as the image determines the elliptic $3$-fold point through which the map factors.

 Notice that in this case, as well as in the next two examples, the factorisation condition is in fact a genus one condition.

 \item[${}^{\text{br}=2}\!\pazocal{E}^{(2)}$ component] A degree $2$ map can be non-injective only if it is the double cover of a line, so the image consists of a line and a conic; the elliptic curve is contracted to their intersection point. The dimension is $12$, and this locus is contained in \emph{main}.
 
 \item[${}^{\text{br}=2}\!\pazocal{E}^{(1^2)}$ component] This component has dimension $11$, and it contained in \emph{main}.The non-disconnecting bridge maps two to one to a line and the rational components map to concurrent lines, i.e. the image is a 3-fold singularity.
 \item[${}^{\text{br}=1}\!\pazocal{E}^{\mu}$ component] 
 
 Since we can't have a non injective degree one morphism, the generic point of this component actually correspond to a map from a source curve $C=R\cup Z\cup  T^{\mu}$ where: the core $Z$, on which the map is constant, is given by the union of an elliptic curve $E$ and a non separating bridge $B$;  there is a rational tail $R$ on which the map has degree one cleaving to $B$; the rest of the degree is distributed among some tails $T^{\mu}$ cleaving to $E$.
 In particular this loci lie in the closure of the components $\pazocal D^{(1)\sqcup\mu}$ with one degree $1$ tail studied before, so they
intersect \emph{main} within $\pazocal D^{(1)\sqcup\mu}$.

 \item[${}^{\text{hyp}}\!\pazocal{D}^{(2)}$ component] The core covers a line two-to-one, and the tail embeds as a conic. The dimension is $13$. Factoring through a hyperelliptic ribbon, we see that the map on the tail must be ramified at the node, i.e. it must be the double cover of a line ramified at the attaching point. The dimension is $11$. See Figure \ref{hyp}. Notice that this locus remains unaltered in $\pazocal{V\!Z}_2(\PP^2,4)$.
 
\begin{figure}[h]
   \begin{tikzpicture}[scale=.4]
 \coordinate (A) at (2,4);
 \coordinate (B) at (1.7,3.8);
 \coordinate (C) at   (2,3.6);
 \coordinate (D) at (2.3,3.4) ;
  \coordinate (E) at (2,3.2) ;
  \coordinate (F) at (1.7,3);
  \coordinate (G) at  (2,2.8);
  \coordinate (H1) at (2.3,4.3);
 \coordinate (H2) at (2.4,4.8);
 \coordinate (R1) at (1.9,4.3);
 \coordinate (R2) at (3.5,4.5);
  \coordinate (R1b) at (2.1,2.2);
 \coordinate (R2b) at (3.5,1.8);
 \coordinate (J1) at (2.3,2.4);
 \coordinate (J2) at (2.4,2);
 
   \coordinate(fact1) at (2.4,1);
   \coordinate(fact2) at (3.1,-.4);
   
 \coordinate (Fs) at (3,3.5);
  \coordinate (Ft) at (4.6,3.5);
   \coordinate (Flabel) at (3.8,3.5);
   
    \coordinate (Fbs) at (5.3,-.3);
  \coordinate (Fbt) at (6.4,1);
    \coordinate (Fblabel) at (5.9,.1);

  \begin{scope}[every coordinate/.style={shift={(0,0)}}]
\draw[draw=blue] ([c]A) .. controls ([c]B) ..
       ([c]C) .. controls ([c]D)  .. ([c]E) .. controls ([c]F).. ([c]G);
      \draw[draw=blue]  plot [smooth, tension=3] coordinates { ([c]A) ([c]H1) ([c]H2)};
       \draw[draw=blue]  plot [smooth, tension=3] coordinates { ([c]G) ([c]J1) ([c]J2)};
       \draw[thick] ([c]R1) to ([c]R2);
       \draw[->,thick] ([c]Fs) to ([c]Ft);
        \node at ([c]Flabel) [above] {\scriptsize{$f$}};
        \node at ([c]R2) [above] {\scriptsize{$2$}};
             \node at ([c]J2) [below] {\begin{color}{blue}\scriptsize{$Z,g=2,d=2$}\end{color}};
 \end{scope}

\coordinate(Q1) at  (0,5.5);
 \coordinate (Q2) at (10,5.5);
 \coordinate (Q3) at (10,1);
 \coordinate (Q4) at (0,1);
 \coordinate (Qmiddle) at (5,1);
 \coordinate (Q3long) at (10,-2.5 );
 \coordinate (Q4long) at (0,-2.5);
\coordinate (Qlongmiddle) at (5,-2.5);

  \begin{scope}[every coordinate/.style={shift={(0,0)}}]     
 \draw (Q1)-- ( Q2)--(Q3)--(Q4)--(Q1);   
 \end{scope}
 

 \coordinate (An) at (1,.1);

   \coordinate (F2n) at (-.2,.1);
     \coordinate (F1n) at  (2.2,.1);
     \coordinate (Gn) at  (1.68,.1);

 \begin{scope}[every coordinate/.style={shift={(6,3)}}]

   \draw[blue]  ([c]F2n) -- ([c]F1n);
 \draw[dashed] ([c]P1)-- ( [c]P2)--([c]P3)--([c]P4)--([c]P1);  
    
     \draw[thick]([c]An) circle (0.7cm);

    \end{scope}   

     \node at (5,1) [below] {\scriptsize{${}^{\textbf{hyp}}\pazocal D^{(2)}$}};
      \node at (12.3,3) [above] {$\cap$};


    \coordinate (A) at (2,4);
 \coordinate (B) at (1.7,3.8);
 \coordinate (C) at   (2,3.6);
 \coordinate (D) at (2.3,3.4) ;
  \coordinate (E) at (2,3.2) ;
  \coordinate (F) at (1.7,3);
  \coordinate (G) at  (2,2.8);
  \coordinate (H1) at (2.3,4.3);
 \coordinate (H2) at (2.4,4.8);
  \coordinate (J1) at (2.3,2.4);
 \coordinate (J2) at (2.4,2);

\begin{scope}[every coordinate/.style={shift={(14,0)}}]
\draw[thick] ([c]A) .. controls ([c]B) ..
       ([c]C) .. controls ([c]D)  .. ([c]E) .. controls ([c]F).. ([c]G);
      \draw[thick]  plot [smooth, tension=3] coordinates { ([c]A) ([c]H1) ([c]H2)};
       \draw[thick]  plot [smooth, tension=3] coordinates { ([c]G) ([c]J1) ([c]J2)};
       \draw[->,thick] ([c]Fs) to ([c]Ft);
        \node at ([c]Flabel) [above] {\scriptsize{$f$}};
       \end{scope}

  \begin{scope}[every coordinate/.style={shift={(14,0)}}]     
 \draw ([c]Q1)-- ( [c]Q2)--([c]Q3)--([c]Q4)--([c]Q1);   
 \end{scope}
 

 \coordinate (Am) at (0,0);
 \coordinate (Bm) at(0.2,-.3);
 \coordinate (Cm) at  ( .4,0);
 \coordinate (Dm) at (.6,.3)  ;
  \coordinate (Em) at (.8,0) ;
  \coordinate (Fm) at (1,-.3);
  \coordinate (Gm) at  (1.2,0);
  \coordinate (H1m) at (-.5,.3);
 \coordinate (H2m) at (-.2,.2);
\coordinate (J1m) at (1.4,.2);
 \coordinate (J2m) at (1.7,.3);
 \coordinate (N1m) at (3.2,-.3);
 \coordinate (N2m) at (.2,-.3);
  \coordinate (N3m) at (3,.3);

 \begin{scope}[every coordinate/.style={shift={(20,3)}}]
 
 \draw[thick]  plot [smooth, tension=3] coordinates {([c]H1m) ([c]H2m) ([c]Am)  };
 
\draw[thick] ([c]Am) .. controls ([c]Bm) ..
       ([c]Cm) .. controls ([c]Dm)  .. ([c]Em) .. controls ([c]Fm).. ([c]Gm);
    \draw[thick]  plot [smooth, tension=3] coordinates { ([c]Gm) ([c]J1m) ([c]J2m)  };   
     \draw[thick]   ([c]J2m).. controls ([c]N1m) and ([c]N2m).. ([c]N3m);
 \draw[dashed] ([c]P1)-- ( [c]P2)--([c]P3)--([c]P4)--([c]P1);  
    \end{scope}

\draw (7.5,1)--(11,-.5);
\draw (19.5,1)--(16.,-.5);

\begin{scope}[every coordinate/.style={shift={(8,-6)}}]

 \draw ([c]Q1)-- ( [c]Q2)--([c]Q3long)--([c]Q4long)--([c]Q1);   
 \end{scope}

\begin{scope}[every coordinate/.style={shift={(8,-6)}}]

    \draw[draw=blue] ([c]A) .. controls ([c]B) ..
       ([c]C) .. controls ([c]D)  .. ([c]E) .. controls ([c]F).. ([c]G);
      \draw[draw=blue]  plot [smooth, tension=3] coordinates { ([c]A) ([c]H1) ([c]H2)};
       \draw[draw=blue]  plot [smooth, tension=3] coordinates { ([c]G) ([c]J1) ([c]J2)};
       \draw[thick] ([c]R1) to ([c]R2);
       \draw[->,thick] ([c]Fs) to ([c]Ft);
        \node at ([c]Flabel) [above] {\scriptsize{$f$}};
        
    \draw[->,dotted] ([c]fact1) to ([c]fact2);
                          
     \draw[->,thick] ([c]Fbs) to ([c]Fbt);         
 \node at ([c]Fblabel) [right] {\scriptsize{$\bar{f}$}};

 \node at ([c]Qlongmiddle) [below] {\scriptsize{${}^{\textbf{hyp}}\pazocal D^{(2)}\cap\overline{\pazocal M}^{\text{main}}$}};
    
 \end{scope}


 \coordinate (YS1) at (0,4.8) ;
 \coordinate (YS2) at (0,2) ;
  \coordinate (YS11) at (0.2,4.8) ;
 \coordinate (YS12) at (0.2,2) ;

  \coordinate (WS1) at (-.3,2.8) ;
 \coordinate (WS2) at (1.5,2.8) ;
 
\begin{scope}[every coordinate/.style={shift={(12.1,-10)}}]

\draw[thick] ([c]YS1) -- ([c]YS2)--([c]YS12)--([c]YS11)--([c]YS1);
 \draw[thick] ([c]WS1) -- ([c]WS2);
\end{scope}


\coordinate (GI1) at (1.2,- 0.7);
 \coordinate (GI2) at (1.5,.7);
 \coordinate (GB) at (1.35,0);
    
   \begin{scope}[every coordinate/.style={shift={(14,-3)}}]
      \draw[blue] ([c]GS1) -- ([c]GS2);
    
     \draw[thick] ([c]GI1) -- ([c]GI2);
     \fill[blue] ([c]GB) circle (3pt);
       \fill[blue] ([c]GB)[xshift=.1cm,yshift=.5cm] circle (3pt);
   
 \draw[dashed] ([c]P1)-- ( [c]P2)--([c]P3)--([c]P4)--([c]P1);

    \end{scope}   

\end{tikzpicture}
\caption{${}^{\textbf{hyp}}\pazocal D^{(2)},\dim=13$; ${}^{\textbf{hyp}}\pazocal D^{(2)}\cap\overline{\pazocal M}^{\text{main}},\dim=11$}
\label{hyp}	
\end{figure}

 \item[${}^{\text{hyp}}\!\pazocal{D}^{(1^2)}$] The image consists of a line $\ell$ (doubly covered by the core) together with two other lines. The dimension is $12$. The locus is entirely contained in \emph{main}. Notice that aligning introduces a $1$-dimensional fibre. On the other hand, if $x_0$ denotes the coordinate vanishing along $\ell$, $x_0$ will only descend to one of the possible ribbons (so, the correct ribbon is determined by the image of the map).
\end{description}

\section*{Acknowledgements} We thank Dhruv Ranganathan for suggesting that we should explore our construction in an example. We are grateful to an anonymous referee for the suggestions that led to an improved exposition of the material contained in this paper. L.B. is supported by the Deutsche Forschungsgemeinschaft (DFG, German Research Foundation) under Germany’s Excellence Strategy EXC-2181/1 - 390900948 (the Heidelberg STRUCTURES Cluster of Excellence).

\medskip

\noindent Luca Battistella\\
Mathematisches Institut, Ruprecht-Karls-Universit\"at Heidelberg \\
\texttt{lbattistella@mathi.uni-heidelberg.de}\\

\noindent Francesca Carocci\\
Ecole Polytechnique F\'ed\'erale de Lausanne \\
\texttt{francesca.carocci@epfl.ch}

\end{document}